\documentclass[10pt]{amsart}
\usepackage[colorlinks]{hyperref}
\usepackage{color,graphicx,shortvrb}
\usepackage[latin 1]{inputenc}
\usepackage{mathtools} 
\usepackage[active]{srcltx} 

\usepackage{enumerate}

\newtheorem{theorem}{Theorem}[section]

\newtheorem{definition}[theorem]{Definition}
\newtheorem{lemma}[theorem]{Lemma}

\newtheorem{proposition}[theorem]{Proposition}
\newtheorem{remark}[theorem]{Remark}

\def\RR{{\mathbb{R}}}

\def\11{\textbf{$1$}}

\def\ir{\displaystyle{\int_{\RR^N}}}
\def\dys{\displaystyle}
\def\rife{\eqref}
\def\inn{\mbox{ in }\quad}

\DeclareGraphicsExtensions{.jpg,.pdf,.png,.eps}

\DeclareMathOperator{\supp}{supp}
\DeclareMathOperator{\dis}{dist}

\numberwithin{equation}{section}

\oddsidemargin 2pt
\evensidemargin 2pt
\textwidth16cm
\textheight22cm
\flushbottom

\keywords{Nonlocal problems, KPZ equation,   Nonlinear parabolic equations, Asymptotic behavior of solutions} 
\subjclass[2010]{35B40, 45A99, 447G20}

\title{Parabolic equations with natural growth approximated by nonlocal equations}
\author[T. Leonori, A. Molino Salas,  S. Segura de Le\'on]
{T. Leonori, A. Molino Salas,  S. Segura de Le\'on}
\thanks{}

\address{Tommaso Leonori
\hfill \break\indent Departamento de An\'alisis Matem\'atico, Universidad de Granada,
\hfill\break\indent Avenida Fuentenueva S/N,18071 Granada, Spain} \email{{\tt leonori@ugr.es}}

\address{Alexis Molino Salas
\hfill \break\indent Departamento de An\'alisis Matem\'atico, Universidad de Granada,
\hfill\break\indent Avenida Fuentenueva S/N,18071 Granada, Spain} \email{{\tt amolino@ugr.es }}

\address{ Sergio Segura de Le\'on.
\hfill\break\indent Departament d'An\`alisi Matem\`atica, Universitat de Val\`encia,
\hfill\break\indent Dr. Moliner 50,
46100 Burjassot, Valencia, Spain} \email{{\tt sergio.segura@uv.es.}}

\begin{document}

\begin{abstract}
In this paper we study several aspects related with solutions of nonlocal problems whose prototype is
$$
\begin{cases}
u_t  =\displaystyle \int_{\RR^N} J(x-y) \big( u(y,t) -u(x,t) \big) \mathcal G\big( u(y,t) -u(x,t)   \big) dy     \qquad & \mbox{ in } \, \Omega \times (0,T)\,,\\
u(x,0)=u_0 (x) & \mbox{ in }   \Omega, \
\end{cases}
$$
 where we take, as the most important instance, $\mathcal G (s) \sim 1+ \frac{\mu}{2} \frac{s}{1+\mu^2 s^2 }$ with $\mu\in \RR$ as well as $u_0 \in L^1 (\Omega)$, $J$ is a smooth symmetric function with compact support  and $\Omega$ is either a bounded smooth subset of $\RR^N$, with nonlocal Dirichlet boundary condition, or $\RR^N$ itself.

 \medskip

 The results deal with existence, uniqueness, comparison principle and asymptotic behavior. Moreover we prove that if the kernel rescales in a suitable way, the unique solution of the above problem converges to a solution of the deterministic Kardar-Parisi-Zhang equation.

\end{abstract}

\maketitle

\section{Introduction}

This work is concerned with the study the existence, uniqueness, comparison principle and asymptotic behavior for the following nonlinear parabolic equation with nonlocal diffusion,
\begin{equation}\label{nonlocal_gen}
\begin{cases}
u_t(x,t)  =\displaystyle \int_{\RR^N} J(x-y) \big( u(y,t) -u(x,t) \big) \mathcal G\big( u(y,t) -u(x,t)   \big) dy     \qquad & \mbox{ in } \, \Omega \times (0,T)\,,\\
u(x,0)=u_0 (x) & \mbox{ in }   \Omega, \
\end{cases}
\end{equation}
for an appropriate functions $J$ and $\mathcal{G}$ (see below \eqref{(H)} and \eqref{Gi}), and its relationship with the local problem
\begin{equation}\label{KPZ}
\begin{cases}
u_t -\Delta  u=\mu |\nabla   u|^2  \qquad &\hbox{ in } \Omega \times (0,T)\,,\\[1.5 ex]
u (x,0)= u_0 (x) &\hbox{ in }\Omega\,, 
\end{cases}
\end{equation}
where
\begin{enumerate}
  \item $\Omega$ is either $\RR^N$ itself (Cauchy problem) or a bounded smooth subset of $\RR^N$ adding the boundary condition $u(x,t)=h(x,t)$ on $\partial \Omega \times (0,T)$ for $h$ sufficiently smooth (Dirichlet problem);
  \item $T>0$ (possibly infinite) and   $\mu\in  \RR $;
  \item $u_0$ is a smooth enough datum.
\end{enumerate}

\subsection{Local problem}

The equation $u_t -\Delta  u=\mu |\nabla   u|^2$ , at least for $\mu>0$, is known in the literature as the deterministic Kardar-Parisi-Zhang (KPZ) equation. It was proposed in \cite{K} in the physical theory of growth and roughening of surfaces. Further developments on physical applications of the KPZ equation can be found in \cite{BS} (for a survey on more recent aspects we refer to \cite{WERDL}).

\smallskip

The Kardar--Parisi--Zhang equation has given rise to a rich mathematical theory which has had a spectacular recent progress (see \cite{C,H}). From the point of view of Partial Differential Equations, equations having a gradient term with the so-called natural growth have been largely studied in the last decades by many mathematicians: in addition to the classical reference \cite{LSU} let us just mention the pioneer paper by Aronson and Serrin \cite{AS} and also the result due to Boccardo, Murat and Puel \cite{BMP}.

\subsection{Nonlocal problem}

Nonlocal evolution equations have been extensively studied to model diffusion processes. The prototype example in this framework is the following one
\begin{equation}\label{original}
u_t(x,t)=\displaystyle \ir K(x,y)(u(y,t)-u(x,t))dy,
\end{equation}
where the kernel
$K:\RR^N \times \RR^N \to \RR$ is a nonnegative smooth function (not necessarily symmetric) satisfying $\displaystyle \ir K(x,y)dx=1$ for any $y\in \RR^N$
(or variations of it, see for instance \cite{AMRT}).  If $u(y,t)$ is thought of as a density at location  $y$ at time $t$ and $K(x,y)$ as the probability distribution of jumping from place $y$ to place $x$, then the rate at which individuals from any other location go to the place $x$ is given by $ \ir K(x,y)u(y,t)dy$. On the other hand, the rate at which individuals leave the location $x$ to travel to all other places is $-\ir K(y,x)u(x,t)dy=-u(x,t)$. In the absence of external sources this implies that the density must satisfy equation \eqref{original}.

\medskip

We are especially interested in symmetric kernels (we denote them by $J$) that have compact support; it means that the individuals can jump from a place to other, but they cannot go \lq\lq too far away\rq\rq. On the contrary, for instance, nonlocal operators that allow \lq\lq long jumps\rq\rq  correspond to a different choice of kernels. It is the case of the fractional laplacian that involves a  kernel that is singular and that does not have compact support  (see, for instance \cite{V} for a  survey on this latter class of processes).

\medskip

In particular, we consider  $J: \RR^N \to \RR$ as a nonnegative radial symmetric function such that
\begin{equation*}
J \in \mathcal{C}_c(\RR^n), \,\quad \,\,\int_{\RR^N}J(z)\,dz=1\,\,\quad \, \textrm{and}\,\,\,\quad  \ir J(z)z_N^2dz <\infty, \qquad z=(z_1, \dots, z_N).
\end{equation*}

With this choice of the kernel, equation \rife{original} changes into a diffusion equation  of convolution type, namely
\begin{equation}\label{classical}
u_t(x,t)=(J\ast u-u)(x,t)=\ir J(x-y)u(y,t)dy-u(x,t), \qquad \mbox{ in } \Omega \times (0, T)
\end{equation}
(see for instance  \cite{Bates,Chasse,CERW}).

\subsection{Background}

  One of the most important features of nonlocal equations is that can be rescaled to approximate local ones.

In \cite{Elgueta} (see also \cite{MR} for the same type of result in a  more general case) it has been proved   that, under an appropriate rescaling kernel, solutions of \eqref{classical} converge uniformly to solutions of heat equation. To be more specific, solutions of
\begin{equation}\label{heat}
u_t^\varepsilon(x,t)=\displaystyle \frac{C}{\varepsilon ^2}\left[\ir J_\varepsilon (x-y)u(y,t)dy-u(x,t)\right]  \qquad \mbox{ in } \Omega \times (0,T)
\end{equation}
converge uniformly to solutions of
$$v_t=\Delta v \qquad \mbox{ in } \Omega \times (0,T) \,, $$
where $\displaystyle C^{-1}=\frac{1}{2}\int_{\RR^N}J(z) z^2_N\, dz$ and $J_\varepsilon (s)=\displaystyle\frac{1}{\varepsilon^N}J(\frac{s}{\varepsilon})$.

\medskip

Let us mention that results in this direction, with the presence of a gradient term of convection type can be found, for instance, in \cite{IR}: in such a case the equation is the sum of two terms, one corresponding to the  diffusion one, the other to the convection term.


In general, we   consider nonlocal problems of the type
\begin{equation}\label{general0}
u_t (x,t)=\displaystyle \ir J(x-y)\big(u(y,t)-u(x,t)\big)\,\mathcal{G}\big(u(y,t)-u(x,t)\big)\,dy,
\end{equation}
where $\mathcal{G}:\RR\to\RR$  is a suitable continuous function. For instance, if $\mathcal{G}\equiv 1$, then we recover problem \eqref{classical}. Let us mention the case $\mathcal{G}(s)=|s|^{p-2}$, with $p\geq 2$ has been treated in \cite{AMRT} where it is proved that solutions to the rescaled nonlocal problem converge to solutions of the Dirichilet problem for the $p$--Laplacian evolution equation.

\medskip

On the contrary, the kind of kernels  $\mathcal{G}$ we consider does  not have the same structure of the previous ones, since they are bounded and  do not satisfy any symmetry assumptions (neither odd nor even).

\medskip

With this background, it is not surprising that problem \eqref{KPZ} can be approximated by nonlocal equations. The question is to identify what kind of nonlocal equation approximates, under rescaling, problem \eqref{KPZ}. At first glance, one could think that a good approximation for \eqref{KPZ} might be a nonlocal equation such as
\begin{equation*}
  u_t (x,t)=\displaystyle \ir J(x-y)\big(u(y,t)-u(x,t)\big)\,dy+\mu\ir J(x-y)|u(y,t)-u(x,t)|^2\,dy,
\end{equation*}
that is, taking $\mathcal G(s)=1+\mu s$ in \eqref{general0}. We explicitly point out that this is an unbounded function that satisfies $\mathcal G(0)=1$ and $\mathcal G^\prime(0)=\mu$ (compare with condition \eqref{Gi} below). Anyway, for
our approach the lack of boundedness of  $\mathcal{G}$ leads to an obstacle for proving the existence of a solution to \rife{general0} via a fixed point argument. By the other hand, we recall that one of the main tools to deal with problem \eqref{KPZ} is the so--called Hopf--Cole change of unknown which is defined by $w(x,t)=e^{\mu u(x,t)}$. This transforms every classical solution to  \eqref{KPZ} into a classical solution to problem
\begin{equation*}
\left\{
\begin{array}{ll}
w_t(x,t)=\Delta w(x,t)
& \hbox{ in } \Omega \times (0,T)\,,\\[1.5 ex]
w(x,0)=e^{\mu   u_0(x)}
&\hbox{ in }\Omega\,,
\end{array}\right.
\end{equation*}
for a smooth enough datum $u_0$. However, the same kind of difficulty are found if one try to reproduce the  Hopf--Cole transformation and try to  approximate the solution of \eqref{KPZ}   by something of the form
\begin{equation*}
  u_t (x,t)= \ir J(x-y)\Big(e^{\mu u(y,t)}-e^{\mu u(x,t)}\Big)\,dy\,.
\end{equation*}

\subsection{Main results} To conclude this introduction we want to state the most relevant results of our work. In order to not enter in technicalities, let us fix a family of kernels $  \mathcal G_\mu $ that are the easiest (not trivial) example we can consider: for $\mu \in \RR$ let
\begin{equation*}
  \mathcal G_\mu (s) =1+\frac{\mu s}{2(1+\mu^2 s^2)}\,,\qquad s\in\RR\,,\quad  \mu \in \RR,
\end{equation*}
and the corresponding family of nonlocal Dirichlet problems
\begin{equation} \label{model-int}
\left\{
\begin{array}{ll}
u_t(x,t)= \displaystyle
\int_{\RR^N}  J(x-y) \big(u(y,t)-u(x,t) \big)\,\mathcal{G}_\mu\big(u(y,t)-u(x,t)\big)\,dy
&\hbox{ in } \  \Omega \times (0,T) \,,\\[1.5 ex]
u (x,0)=u_0(x)&   \inn \Omega\,,\\[1.5 ex]
u(x,t)=h(x,t) &\inn (\RR^N \setminus \Omega) \times (0,T)\,.
\end{array}
\right.
\end{equation}
with $\Omega$ a bounded domain and  $u_0$ and $h$ smooth enough.

\medskip

After have proved the existence, uniqueness  (see Theorem  \ref{existence}) and a Comparison Principle (see Theorem \ref{C-P}) for solutions of \rife{model-int}, we face the problem of {\it rescaled kernels}.

\medskip

The result we prove, in this model case, reads like this.

%

%

\medskip

 {\it
 Let  $u$ be the unique smooth solution to \eqref{KPZ}, with suitable initial data $u_0$ and boundary condition $u(x,t)=h(x,t)$ on $\partial \Omega \times (0,T)$. Then there exists a family of functions   $\{u^\varepsilon\}$, $\varepsilon>0$, such that $u^\varepsilon$  solves the approximating nonlocal problem
 $$
\left\{
\begin{array}{l}
u_t^\varepsilon(x,t)=
\displaystyle \frac{C }{\varepsilon^2} 
\displaystyle
\int_{\Omega_{J_\varepsilon}}  J_\varepsilon(x-y) \bigg[\big(u^\varepsilon(y,t)-u^\varepsilon(x,t) \big)
+ \frac{\mu}{2}\frac{  \big(u^\varepsilon(y,t)-u^\varepsilon(x,t) \big)^2 }{1+\mu^2\big(u^\varepsilon(y,t)-u^\varepsilon(x,t) \big)^2 }
\bigg]
 dy
\quad   \mbox{ in } \  \Omega\times (0,T)  ,\\[2.0 ex]
u^\varepsilon (x,0)=u_0(x)
\qquad \qquad   \qquad  \quad   \qquad \quad \qquad \qquad  \qquad \qquad  \qquad \qquad  \qquad \qquad  \qquad \qquad
\qquad   \inn \Omega,\\[1.5 ex]
u^\varepsilon(x,t)=h(x,t)
\qquad \qquad  \qquad  \qquad    \qquad \qquad  \qquad \qquad  \qquad \qquad  \qquad \qquad  \qquad
\inn (\Omega_{J_\varepsilon} \negthickspace \setminus \negthickspace \Omega) \times (0,T),
\end{array}
\right.
$$
with $C$ a suitable constant, $\Omega_{J_\varepsilon}= \Omega+\supp J_\varepsilon$ and the family $\{u^\varepsilon\}$ satisfies
$$
\lim_{\varepsilon \to 0} \quad \sup_{t\in [0,T]} \Big\|u^\varepsilon(x,t)-u(x,t)\Big\|_{L^{\infty} (\Omega)} =  0 \,.
$$
}

 \medskip

The same kind of results (i.e. existence, uniqueness and convergence for a  suitable rescaled kernel to a solution of a local problem) are also proved for the corresponding Cauchy problem associated (i.e., $\Omega=\RR^N$).

 \medskip

 In addition, we deal with the asymptotic behavior of the solutions of problem \eqref{nonlocal_gen}. Concretely, we have two kind of results: if $\Omega $ is a bounded domain of $\RR^N$, we prove that the solutions of \rife{model-int} converge uniformly  to the stationary one. On the other hand,  if $\Omega= \RR^N$, we prove that the $L^2$-norm of the solution has a suitable decay in time, depending on the nature (absorption or reaction) of the kernel (see for more details Theorems \ref{asymp Cauchy} and \ref{L^2 mu>0}, respectively).

\subsection*{Plan of the paper}

Section 2 is devoted to show the precise statements of the main results. Preliminaries are contained in Section 3. Section 4 deals with the Dirichlet problem in a bounded domain, while the results concerning the Cauchy problem can be found in Section 5.

\section{Statement of the   results}

This section is devoted to the statement of the main results we prove in the present paper.

Let us  consider the following equation:
\begin{equation}\label{general}
u_t (x,t)=\displaystyle \ir J(x-y)u(y;x,t)\,\mathcal{G}(x,u(y;x,t))\,dy,
\end{equation}
where  $J: \RR^N \to \RR$ is a nonnegative radial symmetric function such that
\begin{equation*}\label{(H)}
J \in \mathcal{C}_c(\RR^n), \qquad \int_{\RR^N}J(z)\,dz=1 \qquad \textrm{and}\qquad  C(J) := \ir J(z)z_N^2dz<\infty, \quad z=(z_1, z_2, \dots, z_N)
\tag{$J$}
\end{equation*}
and where, here and throughout the paper, we denote $u(y;x,t):=u(y,t)-u(x,t)$.

As far as the function $\mathcal{G}$ is concerned, we assume that  $\mathcal{G}:\RR^N\times\RR\to\RR$   is  a nonnegative   Carath\'eodory  function   (namely, $\mathcal G(\cdot, s)$ is measurable for every $s\in\RR$ and $\mathcal G(x,\cdot)$ is continuous for almost every $x\in\RR^N$) satisfying
\begin{equation}\label{Gi}\tag{$\mathcal{G}$}
\exists\  \alpha_2\geq \alpha_1 >0\,: \qquad  \alpha_1\leq\displaystyle \frac{ \mathcal{G}(x,s)s-\mathcal{G}(x,\sigma)\sigma}{s-\sigma}\leq \alpha_2, \hspace{1,1cm} \forall s, \sigma\in\RR\,\ s\ne \sigma\,, \hbox{ and for a.e.  }x\in \RR^N.
\end{equation}

Let us first point out that the above  condition  implies that $\mathcal{G}$ is a positive bounded function, since  taking  $\sigma=0$ in \eqref{Gi}, we   get
\begin{equation*}
0<\alpha_1\leq\mathcal{G}(x,s)\leq \alpha_2, \qquad \hbox{for any  }s\in\RR \hbox{  and for  a.e. }x\in \RR^N.
\end{equation*}
Moreove observe that   the above condition relies to be a sort of uniform ellipticity for the operator, while \rife{Gi} corresponds to a strong monotonicity.

\medskip

Further remarks about the  condition on $\mathcal{G}$ are addressed to Section 3.

\medskip

Anyway, let us stress again that, in contrast with all the known results about nonlocal equation of the above type, in our case we do not require any symmetry (neither odd nor even) assumption to $\mathcal{G}$.

\bigskip

The prototype  of   $\mathcal{G}$ we have in mind (we will come back on this example later)  is the following one:
$$
\mathcal{G}_\mu(x,s)=1+\frac{\mu(x) \, s}{2(1+\mu(x)^2s^2)},\quad  x\in \Omega, \quad s\in \RR\,,
$$
where $\mu:\Omega \to \RR$ stands for a measurable function.

 \subsection{Dirichlet problem}

The first kind of results we want to prove deals with the existence and uniqueness of solutions of a nonlocal Dirichlet boundary value problem. More precisely, consider the following problem in a   bounded domain $\Omega \subset \RR^N$, $N\geq 1$.
\begin{equation*}
\left\{
\begin{array}{ll}
u_t (x,t)=\displaystyle \ir J(x-y)u(y;x,t)\,\mathcal{G}(x,u(y;x,t))\,dy,
& \mbox{ in } \Omega \times (0,T)
\\[1.5 ex]
u(x,t)=h(x,t), & \mbox{ in } (\RR^N \setminus \Omega) \times (0,T),
\\[1.5 ex]
u(x,0)=u_0(x),
& \mbox{ in } \Omega,
\end{array}\right.
\end{equation*}
with  $h \in L^1\left( (\RR^N\setminus \Omega)\times (0,\infty)\right)$ and $u_0 \in L^1(\Omega)$.

\medskip

  Let us first observe  that the integral expression vanishes outside of $\Omega_J=\Omega + \supp(J)$. In this way, $h$ is only needed to be fixed, in fact,  in $\Omega_J \setminus \Omega$ and we can rewrite the above problem as
\begin{equation*}\label{P}
\left\{
\begin{array}{ll}
u_t (x,t)=\displaystyle \int_{\Omega_J} J(x-y)u(y;x,t)\,\mathcal{G}(x,u(y;x,t))\,dy,
& \mbox{ in } \Omega \times (0,T),
\\[1.8 ex]
u(x,t)=h(x,t), &  \mbox{ in }  (\Omega_J\setminus\Omega) \times (0,T) ,
\\[1.5 ex]
u(x,0)=u_0(x),
& \mbox{ in } \Omega,
\end{array}\right.
\tag{$P$}
\end{equation*}
where $T>0$ may be finite or $+\infty$.

\medskip

Due to the aim of the paper, we give now two definitions of solution.
\begin{definition}
Assume that $J$ and $\mathcal{G}$ satisfy \eqref{(H)} and \eqref{Gi}, respectively. \\
For $h(x,t) \in    L^1((\Omega_J\setminus \Omega)\times (0,T))$ and $u_0 (x) \in L^1 (\Omega)$, we define a {\rm weak solution} of problem \eqref{P}   a function $u \in \mathcal{C}([0,T);L^1(\Omega))$ such that:
\begin{gather}
\label{cond1} u(x,t)=\int_0^t \int_{\Omega_J} J(x-y)u(y;x,\tau) \mathcal{G}(x,u(y;x,\tau))\,dy\,d\tau+u_0(x), \qquad  \mbox{ for a.e. x }   \in \Omega,\, t\in ( 0,T),\\
\nonumber  u(y,t)=h(y,t)\qquad\hbox{for a.e. } y \in \Omega_J\setminus \Omega\hbox{ and }  t\in ( 0,T)\\
\nonumber \lim_{t\to 0^+} \| u(x,t) - u_0 (x) \|_{L^1 (\Omega)} =0\,.
\end{gather}

Moreover, if $h(x,t) \in    \mathcal{C}((\Omega_J\setminus \overline{\Omega})\times (0,T))$ and $u_0 (x) \in \mathcal{C} (\overline{\Omega})$, we define a {\rm regular solution} of problem \eqref{P} as a function $u \in \mathcal{C}([0,\infty);\mathcal{C}(\overline{\Omega}))$ such that:
\begin{gather}
\nonumber u(x,t)=\int_0^t \int_{\Omega_J} J(x-y)u(y;x,\tau)\mathcal{G}(x,u(y;x,\tau))\,dy\,d\tau+u_0(x), \qquad  \mbox{ for any  x } \in \overline{\Omega},\,  t\in ( 0,T),\\
\nonumber u(y,t)=h(y,t)\qquad\hbox{for any } y \in \Omega_J\setminus \overline{\Omega} \hbox{ and }  t\in ( 0,T)\\
\nonumber \lim_{t\to 0^+} \| u(x,t) - u_0 (x) \|_{\mathcal{C}(\overline{\Omega})} =0\,.
\end{gather}
\end{definition}

Some more remarks about the meaning of weak and regular solutions are in order to be given.
\begin{remark} \
\begin{itemize}
\item[i)]Observe that, in addition to the different smoothness of the boundary condition and/or the initial datum, the main difference lies on the prescription of data on  $\partial\Omega$. Indeed, for weak solutions, $h$ is prescribed in $(\Omega_J\setminus \Omega)\times (0,T)$ and $u_0$ in $\Omega$, while for regular solutions, $h$ is prescribed in $(\Omega_J\setminus \overline\Omega)\times (0,T)$ and $u_0$ in $\overline\Omega$.

\item[ii)]  As already noticed in \cite{Chasse} (in a different context) the {\rm boundary conditions} cannot be meant in a classical way, i.e. it is not true that the solutions of problem \eqref{P} pointwise coincide with the prescribed boundary data $h(x,t)$. This is due to the fact that the value at any point    $(x,t) \in \partial \Omega \times (0,T)$ depends both on the values of $u$ inside $\overline \Omega \times [0,T]$ and on the boundary datum $h(x,t)$, since
$$
\begin{array}{c}
\displaystyle
u(x,t)=\int_0^{t} \int_{\Omega \cap {\text{supp} J}} J(x-y)u(y;x ,\tau)\ \mathcal{G}\big(x, u(y,\tau)- u(x,\tau) \big)\,dy\,d\tau
\\
\displaystyle
+ \int_0^{t} \int_{\Omega^c \cap {\text{supp} J}} J(x-y)\big(h(y,\tau) - u(x ,\tau) \big) \ \mathcal{G}\big(x, h(y,\tau)- u(x,\tau) \big) \,dy\,d\tau
+u_0(x)\,.
\end{array}$$
Consequently, in contrast with   the local case,   the equation is solved up to the boundary, depending,   near $\partial \Omega$,  also of the prescribed boundary condition.

\item[iii)] Let us stress that the {\rm regularity} required in the   definition of weak solutions is the less restrictive in order to give sense to the formulation and to the boundary and initial conditions.
Anyway from \eqref{cond1}  we deduce that   the time derivative $u_t (x,t) $ of $u$ also  belongs to $\mathcal{C}((0,\infty);L^1(\Omega))$.

Let us also point out that the weak solutions framework is the more natural one in order to prove the existence of  a solution. Indeed we only require an $L^1$ regularity  to prove the existence of a solution.

Finally we want to underline that the nonlocal operator involved in such equation does not have the regularizing effect that is typical of the Laplacian, but leave unchanged the regularity of the initial  and boundary data.

\end{itemize}
\end{remark}

Our existence result is the following.

\begin{theorem}{[Existence]}\label{existence} Consider problem \eqref{P} and suppose that \rife{(H)} and \rife{Gi} are in force. Then:
\begin{itemize}
\item[i)] For any $u_0 \in L^1(\Omega)$ and $h \in L^1((\Omega_J\setminus \Omega)\times (0,T))$ there exists a unique weak solution;
 \item[ii)] For any     $u_0 \in \mathcal C(\overline\Omega)$ and $h \in \mathcal C((\Omega_J\setminus \overline\Omega)\times [0,T))$   there exists a unique regular  solution and  moreover
its time derivative    belongs to $
  \mathcal C(\overline\Omega\times(0,T))
$.
\end{itemize}
\end{theorem}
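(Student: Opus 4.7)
The plan is to prove both parts via a Banach fixed point argument applied to the integral reformulation of \eqref{P}. For $T_0\in(0,T)$ define
\begin{equation*}
\mathcal T(u)(x,t):=u_0(x)+\int_0^t\!\!\int_{\Omega_J} J(x-y)\,u(y;x,\tau)\,\mathcal G(x,u(y;x,\tau))\,dy\,d\tau,\qquad (x,t)\in\Omega\times[0,T_0],
\end{equation*}
where throughout I extend $u$ by $u(y,\tau)=h(y,\tau)$ on $(\Omega_J\setminus\Omega)\times(0,T_0)$. A fixed point of $\mathcal T$ in $\mathcal C([0,T_0];L^1(\Omega))$ (respectively $\mathcal C([0,T_0];\mathcal C(\overline\Omega))$) will be exactly a weak (resp.\ regular) solution of \eqref{P} on $[0,T_0]$.

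The analytic content of \eqref{Gi} that drives the contraction is the Lipschitz bound
\begin{equation*}
|s\,\mathcal G(x,s)-\sigma\,\mathcal G(x,\sigma)|\le \alpha_2\,|s-\sigma|,\qquad \forall\,s,\sigma\in\RR,\ \text{a.e.\ }x\in\RR^N,
\end{equation*}
which is just a rewriting of the upper estimate in \eqref{Gi}. Applying it with $s=u(y;x,\tau)$ and $\sigma=v(y;x,\tau)$, then using $|s-\sigma|\le|(u-v)(y,\tau)|+|(u-v)(x,\tau)|$, the identity $u-v\equiv 0$ on $\Omega_J\setminus\Omega$, $\|J\|_{L^1}=1$, and Fubini, I expect to obtain
\begin{equation*}
\|\mathcal T(u)(t)-\mathcal T(v)(t)\|_{L^1(\Omega)}\le 2\alpha_2\int_0^t\|(u-v)(\tau)\|_{L^1(\Omega)}\,d\tau.
\end{equation*}
Choosing $T_0<(2\alpha_2)^{-1}$ makes $\mathcal T$ a strict contraction on $\mathcal C([0,T_0];L^1(\Omega))$; the self-map property is elementary since $|s\mathcal G(x,s)|\le\alpha_2|s|$ and $u_0\in L^1(\Omega)$, $h\in L^1$ on the exterior strip. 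Banach's theorem yields a unique solution on $[0,T_0]$, and since the contraction constant depends only on $\alpha_2$, I would iterate by restarting at $t=T_0$ with new initial datum $u(\cdot,T_0)\in L^1(\Omega)$ and cover any $[0,T)$ in finitely many steps. Strong $L^1$-continuity at $t=0$ falls out of dominated convergence on the integral identity.

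For part (ii) I would run exactly the same scheme in $\mathcal C([0,T_0];\mathcal C(\overline\Omega))$: the pointwise Lipschitz estimate combined with $\int J\le 1$ yields the analogous contraction in $\mathcal C(\overline\Omega)$-norm, and the continuity in $x$ of $\mathcal T(u)$ is preserved by dominated convergence given the continuity of $u_0$, of $h$ on $\Omega_J\setminus\overline\Omega$, of $\mathcal G(x,\cdot)$, and the compact support of $J$. The time regularity of $u_t$ then comes from differentiating the fixed-point identity: $u_t(x,t)$ equals the nonlocal convolution at time $t$, hence lies in $\mathcal C(\overline\Omega\times(0,T))$ by the same dominated-convergence argument. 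The main conceptual point I expect to require care is the translation of the strong-monotonicity hypothesis \eqref{Gi}, which replaces the symmetry assumptions (odd or even) used in the earlier literature such as \cite{AMRT}, into the Lipschitz bound on $s\mapsto s\mathcal G(x,s)$; once that is recognised the rest is essentially routine, the only bookkeeping being to check that the exterior datum $h$ sits in the integrability/continuity class needed for $\mathcal T$ to be a self-map of the correct Banach space.
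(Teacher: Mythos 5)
Your proposal is correct and follows essentially the same route as the paper: the same integral operator $\mathcal T$, the same Lipschitz bound $|s\,\mathcal G(x,s)-\sigma\,\mathcal G(x,\sigma)|\le\alpha_2|s-\sigma|$ extracted from the upper inequality in \eqref{Gi}, and Banach's fixed point theorem in $\mathcal C([0,T_0];L^1(\Omega))$, respectively $\mathcal C([0,T_0];\mathcal C(\overline\Omega))$. The only (immaterial) difference is that the paper obtains the contraction on an arbitrary interval at once by using the exponentially weighted norm $|||v|||=\max_t e^{-Mt}\|v(\cdot,t)\|$ with $M$ large, whereas you contract on a short interval $T_0<(2\alpha_2)^{-1}$ and continue by restarting at $t=T_0$; both are standard and valid since the contraction constant is independent of the data.
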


Once we have deduced the existence of a solution, one important tool is to compare two solutions, or, more generally a sub and a  supersolution. Here we recall what we mean by those concepts in our setting.

\begin{definition} A function $u \in \mathcal C(\overline\Omega\times [0,T])$ is a regular subsolution to problem \eqref{P} if it satisfies
$u_t\in \mathcal  C(\overline\Omega\times (0,T))$ and
\begin{equation}\label{p}
\left\{
\begin{array}{ll}
u_t(x,t) \leq \displaystyle \int_{\Omega_J} J(x-y)u(y;x,t)\,\mathcal{G}(x,u(y;x,t))\, dy,
& \mbox{ in } \overline{\Omega} \times (0,T)
 ,\\[1.5 ex]
u(x,t)\leq h(x,t), & \mbox{ in }  (\Omega_J\setminus\overline{\Omega}) \times (0,T),
\\[1.5 ex]
u(x,0)\leq u_0(x),
& \mbox{ in }  \overline{\Omega},
\end{array}\right.
\end{equation}
with $u_0 (x) \in \mathcal{C} (\overline{\Omega}) $ and $h(x,t) \in    \mathcal{C}((\Omega_J\setminus \overline{\Omega})\times (0,T))$. \\
As usual, a regular supersolution is defined analogously by replacing ``$\leq$" with ``$\geq$".
Clearly, a regular solution is both a regular subsolution and a regular supersolution.
\end{definition}

\medskip

Next, we state our comparison principle.


\begin{theorem}{[Comparison Principle]}\label{C-P}
Let $u$ an $v$ be a regular subsolution and a regular supersolution of problem \eqref{P}, respectively, with boundary data $h_1 (x,t) $ and $h_2 (x,t)$ and initial data $u_0 (x)$ and $v_0 (x)$, respectively.
If $h_1 (x,t) \leq h_2 (x,t)$ in $\Omega_J\setminus \overline\Omega$ and $u_0 (x)\leq v_0 (x) $ in $\overline\Omega$, then
 $u\leq v$ in $\overline\Omega\times [0,T]$.
\end{theorem}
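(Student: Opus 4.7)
My plan is to subtract the two inequalities, linearize using hypothesis \eqref{Gi}, and then close an $L^2$-energy estimate on $w^+:=(u-v)^+$ via Grönwall. Setting $w:=u-v$ and $\Phi(x,s):=s\,\mathcal G(x,s)$, subtraction yields, for every $(x,t)\in\overline\Omega\times(0,T)$,
$$
w_t(x,t)\le\int_{\Omega_J} J(x-y)\big[\Phi(x,u(y;x,t))-\Phi(x,v(y;x,t))\big]\,dy.
$$
By \eqref{Gi}, the map $\Phi(x,\cdot)$ is bi-Lipschitz with constants $\alpha_1,\alpha_2$, so the incremental ratio
$$
a(x,y,t):=\frac{\Phi(x,u(y;x,t))-\Phi(x,v(y;x,t))}{u(y;x,t)-v(y;x,t)}
$$
(extended by $\alpha_1$ where the denominator vanishes) lies in $[\alpha_1,\alpha_2]$, and since $u(y;x,t)-v(y;x,t)=w(y,t)-w(x,t)$ one obtains the linearized inequality
$$
w_t(x,t)\le\int_{\Omega_J} J(x-y)\,a(x,y,t)\,\big(w(y,t)-w(x,t)\big)\,dy.
$$

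Next, I would set $E(t):=\int_\Omega (w^+(x,t))^2\,dx$. Since $u_0\le v_0$, $E(0)=0$; the regularity of $w$ lets me differentiate under the integral to get $\tfrac12 E'(t)=\int_\Omega w^+w_t\,dx$. Multiplying the linearized inequality by $w^+(x,t)\ge0$ and using $w^+w=(w^+)^2$, the $-w(x,t)$ piece produces the nonpositive absorption term $-\int_\Omega (w^+)^2\int_{\Omega_J} J\,a\,dy\,dx$, which I simply discard. For the remaining ``production'' term I split $\Omega_J=\overline\Omega\cup(\Omega_J\setminus\overline\Omega)$: on the outer region $w(y,t)\le h_1(y,t)-h_2(y,t)\le 0$, so the contribution is nonpositive; on $\overline\Omega$ I use $w(y,t)\le w^+(y,t)$ and $a\le\alpha_2$, and Cauchy--Schwarz combined with Young's convolution inequality $\|J*w^+\|_{L^2}\le\|J\|_{L^1}\|w^+\|_{L^2}=\|w^+\|_{L^2}$ gives
$$
\int_\Omega w^+(x,t)\int_{\overline\Omega} J(x-y)\,a(x,y,t)\,w(y,t)\,dy\,dx\le\alpha_2\, E(t).
$$
Altogether $E'(t)\le 2\alpha_2 E(t)$.

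Grönwall together with $E(0)=0$ then forces $E\equiv 0$, so $w^+\equiv 0$ on $\overline\Omega\times[0,T]$ by continuity, which is precisely $u\le v$. The main obstacle I anticipate is that the incremental coefficient $a(x,y,t)$ is not symmetric in $(x,y)$, so the standard symmetrization trick (the $x\leftrightarrow y$ swap that would turn the bilinear form into a nonlocal Dirichlet form $-\tfrac12\iint J(x-y)a(w^+(y)-w^+(x))^2\,dy\,dx$ of favorable sign) is unavailable. The workaround above sacrifices the favorable absorption term and controls the nonsymmetric production term directly via Young's inequality, paying a factor $\alpha_2$ that is nevertheless enough to close the Grönwall loop.
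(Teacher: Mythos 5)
Your proof is correct, but it follows a genuinely different route from the paper's. For Theorem \ref{C-P} the paper argues pointwise by contradiction: it perturbs $w=u-v$ to $w-\delta t$, takes a maximum point $(x_0,t_0)$ of this perturbed function on the compact set $\overline\Omega\times[0,T]$ (where the value is positive), notes $w_t(x_0,t_0)\ge\delta>0$, and then shows the right-hand side of the linearized inequality is nonpositive at that point because $w(y,t_0)-\delta t_0\le w(x_0,t_0)-\delta t_0$ inside $\Omega$ and the boundary data force $w\le 0$ outside --- a contradiction. You instead run an $L^2$ energy estimate on $w^+$ closed by Gr\"onwall; your handling of the nonsymmetric coefficient $a(x,y,t)$ is sound (discarding the absorption term, bounding $a\,w(y,t)\le\alpha_2 w^+(y,t)$, using the sign of $w$ on $\Omega_J\setminus\overline\Omega$, and Young's inequality with $\|J\|_{L^1}=1$), and the differentiation of $E$ under the integral is justified by the continuity of $w_t$ on $\overline\Omega\times(0,T)$. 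It is worth noting that your strategy is essentially the one the paper itself deploys for the Cauchy problem (Theorem \ref{C-P_Cauchy}), there in $L^1$ form via multiplication by the indicator of $\{w\ge0\}$: the integral approach does not require the maximum to be attained and so survives the loss of compactness on $\RR^N$, whereas the paper's maximum-point argument is shorter and more elementary when $\overline\Omega\times[0,T]$ is compact. Your version also has the advantage of extending more readily to less regular sub/supersolutions, since it never evaluates the equation at a single point.
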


\begin{remark}
 The existence, uniqueness and comparison principle are also true relaxing the hypotheses on the  kernel $J(x-y)$ by considering a more  general one of the form  $K:\RR^N\times\RR^N\to \RR^+$ with compact support in $\Omega\times B(0,\rho)$, with $\rho >0$ such that
\[
0<\sup_{y \in B(0,\rho )}K(x,y)=R(x)\in L^\infty(\Omega).
\]
\end{remark}

The next result we want to prove relates solutions of local and nonlocal equations.
In order to do it, let us fix  a H\"older continuous function $\mu: \overline\Omega \to \RR$ with exponent $\alpha\in (0,1)$, and consider
\begin{equation}\label{G_mu}
\mathcal{G}_\mu(x,s)=1+\frac{\mu(x)\, s}{2(1+\mu(x)^2s^2)},\qquad(x,s)\in \overline\Omega\times \RR.
\end{equation}
The local problem we are interested in is the following
\begin{equation}\label{quasilinear}
\left\{
\begin{array}{ll}
v_t(x,t)=\Delta  v(x,t)+\mu(x) |\nabla   v(x,t)|^2
& \mbox{ in } \Omega \times (0,T),
\\[1.5 ex]
v(x,t)=h_0(x,t) & \mbox{ on } \partial \Omega \times (0,T),
\\[1.5 ex]
v(x,0)=v_0(x)
& \mbox{ in } \Omega\,.
\end{array}\right.
\end{equation}
Observe that if, for the same $0<\alpha<1$, we have   $\partial\Omega \in \mathcal C^{2+\alpha}$, $v_0\in \mathcal C^{1+\alpha}(\overline\Omega)$, $h\in \mathcal C^{1+\alpha, 1+\alpha/2}(\partial\Omega\times [0,T])$ with $v_0 $ and $h$ compatible (namely,  they are globally a $C^{1+\alpha, 1+\alpha/2}$ function of the parabolic boundary of the cylinder) and the equation holds up to the boundary, then Theorem 6.1 of Chapter V in \cite{LSU} provides a solution $v\in \mathcal C^{2+\alpha, 1+\alpha/2}(\overline\Omega\times(0,T])$.

Such a result becomes trivial if we assume $\mu(x) = \mu \in \RR$, after the Hopf--Cole transformation, since solutions of the heat equation satisfy the required regularity.

\medskip

We set here the definition of {\it classical solution}.

\begin{definition}
We say that  $v\in \mathcal C(\overline\Omega\times[0,T])\cap \mathcal C^{2+\alpha,1+\alpha/2}(\Omega\times(0,T))$ is
a \emph{classical solution} for the
Dirichlet problem \eqref{quasilinear} if it satisfies both the equations and the boundary and initial conditions in a  pointwise sense.

\end{definition}

\medskip

Consider now, for any $\varepsilon>0$  the rescaling nonlocal problem
\begin{equation}\label{P_e}
\left\{
\begin{array}{ll}
u_t^\varepsilon(x,t)=
\displaystyle \frac{C(x)}{\varepsilon^2} \  \displaystyle \int_{\Omega_{J_\varepsilon}} \negthickspace \negthickspace J_\varepsilon(x-y)u^\varepsilon(y;x,t)\,\mathcal{G}_\mu(x,u^\varepsilon(y;x,t))dy \
& \mbox{ in } \overline{\Omega} \times (0,T),
\\[1.5 ex]
u^\varepsilon(x,t)=h(x,t) & \mbox{ in }  (\Omega_{J_\varepsilon} \setminus\overline{\Omega}) \times (0,T),
\\[1.5 ex]
u^\varepsilon (x,0)=u_0(x)
&  \mbox{ in }  \overline{\Omega},
\end{array}
\right.
\end{equation}
where  $\mathcal{G}_\mu$ defined in \eqref{G_mu} and $C(x)$, $u_0$ and $h$ are  suitable measurable functions.

\medskip

Here we state our converging result.

\medskip

\begin{theorem}\label{principal}
Let $\Omega$ be a $\mathcal C^{2+\alpha}$, with $\alpha \in (0,1)$, bounded domain of $\RR^N$, $N\geq 1$, and let $v$ be a classical solution of the quasilinear problem \eqref{quasilinear} with
$h  \in \mathcal C^{1+\alpha} \left( \Omega_{J_\varepsilon} \negthickspace \setminus \negthickspace  \Omega \times (0,T]\right)$ such that $h\big|_{\partial \Omega \times (0,T)} = h_0 (x,t)$
and $v_0 \in \mathcal C^{1+\alpha} (\overline\Omega)$.
Assume that $J$ satisfies \rife{(H)} and that for a.e. $x$ in $\Omega$, $\mathcal{G} (x,s)$ is a $\mathcal{C}^{1+\alpha}$ function with respect to the $s$ variable such that  that   \rife{Gi} holds true.
For any  $\varepsilon > 0$, let $u^\varepsilon$ denote the solution to
\begin{equation}\label{P_ee}
\left\{
\begin{array}{ll}
u_t^\varepsilon(x,t)=
\displaystyle \frac{C(x)}{\varepsilon^2} \  \displaystyle \int_{\Omega_{J_\varepsilon}} \negthickspace \negthickspace J_\varepsilon(x-y)u^\varepsilon(y;x,t)\,\mathcal{G}(x,u^\varepsilon(y;x,t))dy  
& \mbox{ in } \overline{\Omega} \times (0,T),
\\[1.5 ex]
u^\varepsilon(x,t)=h(x,t) & \mbox{ in }  (\Omega_{J_\varepsilon} \setminus\overline{\Omega}) \times (0,T),
\\[1.5 ex]
u^\varepsilon (x,0)=v_0(x)
& \mbox{ in }  \overline{\Omega},
\end{array}
\right.
\end{equation}
with   $ C(x) ^{-1}=\frac{1}{2}C(J)\mathcal{G}(x,0)$ and  $\mu(x)=\displaystyle \frac{2\mathcal{G}_s^\prime(x,0)}{\mathcal{G}(x,0)}$ for any  a.e. $x\in \Omega$. Then we have
$$
\displaystyle \lim_{\varepsilon \to 0}\quad \sup_{t\in [0,T]} \Big\|u^\varepsilon( x,t)-v(x,t) \Big\|_{L^{\infty} (\Omega)} = 0\, .
$$
\end{theorem}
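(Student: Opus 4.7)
The plan is to show that the classical solution $v$ of \eqref{quasilinear} is an \emph{almost solution} of the rescaled nonlocal problem \eqref{P_ee}, with a consistency error $R_\varepsilon$ that vanishes as $\varepsilon\to 0$, and then to conclude by applying the comparison principle (Theorem~\ref{C-P}) to explicit sub- and supersolutions built from $v$.

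\textbf{Extension and consistency.} First I would extend $v$ to $\tilde v$ on $\Omega_{J_\varepsilon}\times[0,T]$ by setting $\tilde v=v$ on $\overline\Omega\times[0,T]$ and $\tilde v=h$ on $(\Omega_{J_\varepsilon}\setminus\overline\Omega)\times[0,T]$; continuity is ensured by $v|_{\partial\Omega}=h_0=h|_{\partial\Omega}$. Writing
$$L_\varepsilon[\tilde v](x,t):=\frac{C(x)}{\varepsilon^2}\int_{\Omega_{J_\varepsilon}} J_\varepsilon(x-y)\,\tilde v(y;x,t)\,\mathcal G(x,\tilde v(y;x,t))\,dy,$$
the core step is the pointwise expansion: using $\mathcal G(x,s)=\mathcal G(x,0)+\mathcal G_s^\prime(x,0)s+O(s^2)$ together with, wherever $v$ is $\mathcal C^{2+\alpha}$,
$$v(y,t)-v(x,t)=\nabla v(x,t)\cdot(y-x)+\tfrac12 (y-x)^\top D^2v(x,t)(y-x)+O(|y-x|^{2+\alpha}),$$
the first-order term integrates to zero by the radial symmetry of $J_\varepsilon$ (this applies on the whole support of $J_\varepsilon(x-\cdot)$, since $B(x,\varepsilon\rho)\subset\Omega_{J_\varepsilon}$ for every $x\in\overline\Omega$), while the quadratic moments $\int J_\varepsilon(z)z_iz_j\,dz=\varepsilon^2\delta_{ij}C(J)$ produce $\tfrac12\mathcal G(x,0)C(J)\Delta v$ from the Hessian term and $\mathcal G_s^\prime(x,0)C(J)|\nabla v|^2$ from the $\mathcal G_s^\prime$ correction after the $1/\varepsilon^2$ rescaling. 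The prescriptions $C(x)^{-1}=\tfrac12C(J)\mathcal G(x,0)$ and $\mu(x)=2\mathcal G_s^\prime(x,0)/\mathcal G(x,0)$ are \emph{exactly} those that make the total equal to $\Delta v+\mu(x)|\nabla v|^2$, up to a remainder $R_\varepsilon$ collecting the $O(|y-x|^{2+\alpha})$ terms. Since $v$ satisfies \eqref{quasilinear} pointwise, this yields
$$v_t - L_\varepsilon[\tilde v] = -R_\varepsilon, \qquad \|R_\varepsilon\|_{L^\infty(\Omega\times[0,T])}\to 0 \text{ as }\varepsilon\to 0.$$

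\textbf{Comparison argument.} Because $L_\varepsilon$ depends on $\tilde v$ only through the spatial differences $\tilde v(y,t)-\tilde v(x,t)$, adding a function of $t$ alone leaves $L_\varepsilon[\tilde v]$ unchanged. I would therefore consider
$$\bar u^\varepsilon(x,t):=\tilde v(x,t)+t\|R_\varepsilon\|_\infty,\qquad \underline u^\varepsilon(x,t):=\tilde v(x,t)-t\|R_\varepsilon\|_\infty.$$
A direct computation gives $\partial_t\bar u^\varepsilon=v_t+\|R_\varepsilon\|_\infty\geq v_t+R_\varepsilon=L_\varepsilon[\tilde v]=L_\varepsilon[\bar u^\varepsilon]$ in $\overline\Omega\times(0,T)$, $\bar u^\varepsilon(\cdot,0)=v_0=u^\varepsilon(\cdot,0)$, and $\bar u^\varepsilon\geq h=u^\varepsilon$ on $(\Omega_{J_\varepsilon}\setminus\overline\Omega)\times(0,T)$, so $\bar u^\varepsilon$ is a regular supersolution of \eqref{P_ee}; the analogous check makes $\underline u^\varepsilon$ a regular subsolution. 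Theorem~\ref{C-P} then yields $\underline u^\varepsilon\leq u^\varepsilon\leq\bar u^\varepsilon$, and hence
$$\sup_{t\in[0,T]}\|u^\varepsilon(\cdot,t)-v(\cdot,t)\|_{L^\infty(\Omega)}\leq T\|R_\varepsilon\|_\infty\longrightarrow 0.$$

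\textbf{Main obstacle.} The delicate point is the uniform control of $R_\varepsilon$ for $x$ within distance $O(\varepsilon)$ of $\partial\Omega$: there $B(x,\varepsilon\rho)$ straddles $\partial\Omega$, the $\mathcal C^{2+\alpha}$ Taylor expansion is not directly available for $y\in\Omega_{J_\varepsilon}\setminus\overline\Omega$ (where $\tilde v=h$ is only $\mathcal C^{1+\alpha}$ and its normal derivative can jump across $\partial\Omega$), and a naive estimate on the resulting first-order term would produce a blow-up of order $\varepsilon^{\alpha-1}$. To close the argument I would split the integral over $y\in\overline\Omega$ and $y\in\Omega_{J_\varepsilon}\setminus\overline\Omega$, compare $h$ with a $\mathcal C^{2+\alpha}$ extension of $v$ across $\partial\Omega$ (available since $\partial\Omega\in\mathcal C^{2+\alpha}$ and $v\in\mathcal C^{2+\alpha}(\overline\Omega)$), and exploit the fact that this difference vanishes on $\partial\Omega$ so that it is $O(\dis(\cdot,\partial\Omega))$, which is enough to absorb the troublesome boundary contribution into a uniformly $o(1)$ remainder.
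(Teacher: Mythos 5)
Your overall architecture (Taylor consistency in the interior, then barriers of the form $\tilde v\pm Kt$ plus the comparison principle) is the same as the paper's, and your interior computation is correct: the choices of $C(x)$ and $\mu(x)$ do exactly cancel against the second moments of $J_\varepsilon$. The genuine gap is in the step you yourself flag as the ``main obstacle'', and your proposed fix does not close it. If you build the comparison function by pasting $h$ onto $v$ across $\partial\Omega$, then for $x$ within distance $O(\varepsilon)$ of $\partial\Omega$ the operator $L_\varepsilon$ sees the exterior values $h(y)$. Comparing with a $\mathcal C^{2+\alpha}$ extension $V$ of $v$, the difference $g=h-V$ indeed vanishes on $\partial\Omega$, but since only the tangential derivatives of $h$ and $V$ must agree there, the best uniform bound is $|g(y)|\le C\,\dis(y,\partial\Omega)=O(\varepsilon)$ on $\Omega_{J_\varepsilon}\setminus\overline\Omega$. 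Feeding this into $L_\varepsilon$, which carries the factor $\varepsilon^{-2}$ and (for such $x$) puts an $O(1)$ fraction of the mass of $J_\varepsilon(x-\cdot)$ outside $\Omega$, produces a contribution of size $O(\varepsilon)/\varepsilon^{2}=O(\varepsilon^{-1})$. So your remainder $R_\varepsilon$ is not uniformly $o(1)$: it blows up on an $\varepsilon$-strip near the boundary, and constant-in-space barriers cannot absorb a pointwise error of that size, however thin the strip.

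The paper avoids this by never inserting $h$ into the comparison function. It takes $\tilde v$ to be a global $\mathcal C^{2+\alpha,1+\alpha/2}(\RR^N\times[0,T])$ extension of $v$, so that the consistency error $\Lambda_\varepsilon(\tilde v)=\Delta\tilde v+\mu|\nabla\tilde v|^2-\mathcal L_\varepsilon(\tilde v)$ is uniformly $O(\varepsilon^\alpha)$ up to and including the boundary strip (Lemma \ref{approx}). The mismatch between this smooth extension and the prescribed datum $h$ then appears only as a \emph{boundary-value} discrepancy of $w^\varepsilon=\tilde v-u^\varepsilon$ on $(\Omega_{J_\varepsilon}\setminus\overline\Omega)\times(0,T)$, of size $O(\varepsilon)$ since $h=h_0+O(\varepsilon)$ there; this costs only $O(\varepsilon)$ through the comparison principle and is absorbed by the additive constant in the barrier $\overline w=K_1\varepsilon^\alpha t+K_2\varepsilon$ (note the $K_2\varepsilon$ term, which your barriers lack and which is precisely what handles the boundary). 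In short: an $O(\varepsilon)$ error is harmless in the boundary condition but fatal inside the $\varepsilon^{-2}$-rescaled operator, and your construction puts it in the wrong place. Rerouting your argument through a globally smooth extension, and moving the $h$ versus $\tilde v$ discrepancy into the lateral inequality for the barriers, repairs the proof and recovers the paper's rate $K_1\varepsilon^\alpha T+K_2\varepsilon$.
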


Let us stress that the same kind of result (as well as the existence, uniqueness and Comparison Principle one) can be proved in a more general framework. First of all, we might consider the same equation adding on the right hand side a (smooth enough) function. On the other hand, a more general kernel, that depends also on $y$ could be considered (see Remark \ref{4.3} for some more details). We decided to skip these generalizations in order to keep the paper more readable.

\medskip

The last type of results of this section deals with the asymptotic behavior of the solutions to \rife{P}. More precisely we prove, as it is usual for parabolic equations, that a solution of problem \rife{P} converges, for large times, to a stationary solution of the same problem.

\medskip

In order to avoid technicalities, we assume that the lateral condition is homogeneous, i.e. $h(x,t) \equiv 0$.

\medskip

 Here we state our result that asserts such a convergence, even if, under some additional hypotheses, we provide results on the rate of convergence (see Remark \ref{4.5} for more details).


\begin{theorem}\label{comp2}
For every $0 \leq u_0\in\mathcal C_0(\overline\Omega)$, the regular solution to problem
\begin{equation}\label{prob-hom}
\left\{
\begin{array}{ll}
  u_t(x,t) = \displaystyle \int_{\Omega_J}J(x-y)u(y;x,t) \ \mathcal G(x,u(y;x,t))\,dy
  & \mbox{ in } \overline{\Omega} \times (0,+ \infty),
\\[1.5 ex]
u(x,t)=0, & \mbox{ in }  \Omega_J\setminus \overline\Omega\times (0,+ \infty), \,\,t>0,\\[1.5 ex]
u(x,0)=u_0(x) & \mbox{ in } \overline\Omega,
\end{array}\right.
\end{equation}
satisfies $$\lim_{t\to\infty}u(x,t)=0 \qquad \mbox{  uniformly in } \quad \overline\Omega\,.$$
\end{theorem}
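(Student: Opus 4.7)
My plan is to combine the Comparison Principle (Theorem~\ref{C-P}) with an $\omega$-limit/maximum-principle argument. As a first step I would verify that $\underline{u}\equiv 0$ and $\bar u\equiv \|u_0\|_{L^\infty(\Omega)}$ are, respectively, a subsolution and a supersolution of \eqref{prob-hom}: both checks are immediate, since $h\equiv 0$ and on a constant function the nonlocal operator reduces to $\psi_x(-\|u_0\|_{L^\infty(\Omega)})\int_{\Omega_J\setminus\overline\Omega}J(x-y)\,dy$, which is nonpositive by $(\mathcal G)$ (with $\psi_x(s):=s\,\mathcal G(x,s)$). Theorem~\ref{C-P} then yields $0\le u(x,t)\le\|u_0\|_{L^\infty(\Omega)}$, and restarting the problem at any time $t_0>0$ shows that $M(t):=\|u(\cdot,t)\|_{L^\infty(\overline\Omega)}$ is nonincreasing, so $m:=\lim_{t\to+\infty}M(t)\in[0,\|u_0\|_{L^\infty(\Omega)}]$ exists.

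The core of the argument is to prove $m=0$. Assuming $m>0$ for contradiction, I would consider, for a sequence $t_n\to+\infty$, the time-shifted functions $u_n(x,\tau):=u(x,t_n+\tau)$. By $(\mathcal G)$ the right-hand side of the equation is bounded in absolute value by $2\alpha_2\|u_0\|_{L^\infty(\Omega)}$, so $\{u_n\}$ is uniformly bounded and equi-Lipschitz in $\tau$, uniformly in $x$ and $n$. A diagonal extraction along a countable dense set of $\overline\Omega$, combined with Arzel\`a--Ascoli in $\tau$, produces a subsequence converging pointwise in $x$ and uniformly in $\tau$ on compact intervals. Passing to the limit in the integral form of the equation---legitimate thanks to the continuity and compact support of $J$ together with dominated convergence---one obtains a limit $u_\infty:\overline\Omega\times\RR\to[0,m]$ that satisfies the same nonlocal equation on $\overline\Omega\times\RR$, is identically $0$ outside $\overline\Omega$, and fulfils $\sup_{\overline\Omega}u_\infty(\cdot,\tau)=m$ for every $\tau\in\RR$.

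To reach a contradiction, I would pick $x^*\in\overline\Omega$ with $u_\infty(x^*,0)=m$. Since $u_\infty(x^*,\tau)\le m=u_\infty(x^*,0)$ for every $\tau$, the scalar map $\tau\mapsto u_\infty(x^*,\tau)$ attains a two-sided maximum at $\tau=0$, so $\partial_\tau u_\infty(x^*,0)=0$ and the equation forces
$$
0=\int_{\Omega_J}J(x^*-y)\,\psi_{x^*}\bigl(u_\infty(y,0)-m\bigr)\,dy.
$$
By $(\mathcal G)$ the integrand is pointwise nonpositive (combine $u_\infty\le m$ inside $\overline\Omega$, $u_\infty=0$ outside, and the strict monotonicity of $\psi_{x^*}$ with $\psi_{x^*}(0)=0$), hence $u_\infty(\cdot,0)\equiv m$ on $\{y:J(x^*-y)>0\}\cap\overline\Omega$ and no such $y$ may lie in $\Omega_J\setminus\overline\Omega$. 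Iterating the argument at each newly-identified maximum point, and using positivity of $J$ on a ball around the origin together with boundedness of $\Omega$, after finitely many steps the $m$-level set reaches a point $x$ with $\delta(x):=\int_{\Omega_J\setminus\overline\Omega}J(x-y)\,dy>0$; at that point the same identity forces $\psi_x(-m)\,\delta(x)=0$, which is impossible since $\psi_x(-m)\le -\alpha_1 m<0$. This gives $m=0$ and hence $u(\cdot,t)\to 0$ uniformly on $\overline\Omega$.

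The main technical obstacle lies in the compactness step: the nonlocal operator has no regularising effect in $x$, so the modulus of continuity of $u(\cdot,t)$ cannot a priori be controlled uniformly in $t$, and the pointwise limit $u_\infty$ need not be continuous in $x$. I would circumvent this by working with pointwise (not uniform) convergence in space, passing to the limit directly in the integral form of the equation, and if necessary replacing $u_\infty(\cdot,0)$ by its upper-semicontinuous envelope in order to locate an honest maximum point $x^*$ from which to start the propagation.
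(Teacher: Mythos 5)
Your core mechanism --- evaluate the limit equation at a maximum point, use \eqref{Gi} to conclude that the integrand is nonpositive and hence vanishes, and propagate the level set of the maximum in steps of size $r$ until it meets the exterior region where the zero condition kills it --- is exactly the engine of the paper's argument. The genuine gap is in how you produce a limit object to which this can be applied. Your contradiction needs the $\omega$-limit $u_\infty$ to satisfy $\sup_{\overline\Omega}u_\infty(\cdot,\tau)=m>0$ with the supremum attained. But you obtain $u_\infty$ only as a pointwise-in-$x$ limit of $u(\cdot,t_n+\tau)$, and, as you yourself observe, there is no uniform-in-time modulus of continuity in $x$: the operator does not regularize, and in a bounded domain with $x$-dependent $\mathcal G$ the translation-invariance trick that would yield one is unavailable. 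Consequently the maximum points of $u(\cdot,t_n)$ may wander and the pointwise limit may satisfy $\sup u_\infty<m$, or even be identically zero, in which case no contradiction with $m>0$ arises. Replacing $u_\infty(\cdot,0)$ by its upper semicontinuous envelope does not repair this: the envelope is built from the values of $u_\infty$ and cannot recover a supremum already lost in the limit, and it no longer satisfies the equation, so you cannot legitimately evaluate the equation at its maximum point.

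The paper sidesteps the compactness issue entirely by a two-step reduction you did not make. First (Theorem \ref{comp1}) it treats the constant initial datum $\lambda=\|u_0\|_{L^\infty(\Omega)}$: there, applying the Comparison Principle to the time-shifted solution $u(\cdot,\cdot+\tau)$ shows that $u(x,t)$ is \emph{pointwise nonincreasing in $t$}, so $w(x)=\lim_{t\to\infty}u(x,t)$ exists along the full time direction with no subsequence extraction, is semicontinuous in the right direction (monotone limit of continuous functions) and so attains its maximum, and satisfies the stationary equation $\int_{\Omega_J}J(x-y)\mathcal G(x,w(y;x))w(y;x)\,dy=0$. The propagation argument then forces $w\equiv0$ \emph{without ever needing to relate $\sup w$ to the limit of the sup norms}, and Dini's theorem upgrades pointwise monotone convergence to uniform convergence. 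Second, Theorem \ref{comp2} follows by sandwiching the general solution between $0$ and that constant-datum solution. If you wish to keep your structure, the minimal repair is precisely this reduction: compare with the solution issued from the constant $\|u_0\|_{L^\infty(\Omega)}$ \emph{before} taking any limits, so that monotonicity in time replaces the compactness step that cannot be justified.
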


\begin{remark}
We want to stress that the hypothesis $u_0\geq0$ is not, in fact,  necessary, but we assume it just to let the proof easier.
\end{remark}

\medskip

Let us just point out that we have two special cases whose asymptotic behavior is well known in the local setting. If we assume that
\begin{align}\label{mu<0}
\exists \beta>0 \textrm{ : } \mathcal{G}(x,s)s\leq \beta s,\,\qquad  \forall s \in \RR, \mbox{ for a.e. } x \in \RR^N \,,
\end{align}
it corresponds to the {\it absorption case}, i.e. the case in which we have (at least) the same decay estimates as if $\mathcal{G}\equiv 1$.
In fact we can deduce (see Remark  \ref{4.5}) that in the absorption case the rate of convergence at $0$ is of exponential type.
On the other hand, if
\begin{align}\label{mu>0}
\exists \beta>0 \textrm{ : } \mathcal{G}(x,s)s\geq \beta s,\,\qquad  \forall s \in \RR, \mbox{ for a.e. } x \in \RR^N.
\end{align}
the result is more surprising since it correspond to the {\it reaction case}. In this framework it is crucial to deal with smooth solutions, since we exploit, in the proof,  the comparison principle.

\subsection{Cauchy problem}

This section deals with the Cauchy Problem related to \eqref{general}, that is
\begin{equation*}\label{C}
\left\{
\begin{array}{ll}
\negthickspace u_t(x,t)\negthickspace=\negthickspace\displaystyle\int_{\RR^N}\negthickspace\negthickspace\negthickspace J(x-y)u(y;x,t) \ \mathcal{G}(x,u(y;x,t))\,dy
& \mbox{ in } \RR^N \times (0,T),\\[2.0 ex]
u(x,0)=u_0(x)
& \mbox{ in } \RR^N  \,,
\end{array}\right.
\tag{$C$}
\end{equation*}
with $\mathcal{G}  $ as in \eqref{Gi}, $J$ as in \rife{(H)} and $u_0 \in \mathcal{C} (\RR^N)$. First let us give the notion of solution.
\begin{definition}
Given $u_0 \in \mathcal{C}(\RR^N)  $ we define a solution of problem \eqref{C} as a function $u \in \mathcal{C}\left([0,T);\mathcal{C}(\RR^N) \right)$ such that it satisfies
$$
u(x,t)=\int_0^t\ir\negthickspace\negthickspace\negthickspace J(x-y)u(y;x,\tau)\mathcal{G}(x,u(y;x,\tau))dyd\tau+u_0(x) \qquad \mbox{ in } \RR^N \times (0,T).
$$
Consequently, due to the integral expression above,  $u \in \mathcal{C}^1\left((0,T);\mathcal{C}(\RR^N) \right)$.
\end{definition}

\medskip

The first result we want to present in this framework deals with the existence of a bounded solution.

\medskip

\begin{theorem}{[Existence]}\label{existence_Cauchy} For every continuous and bounded initial data $u_0$ there exists a unique solution $u \in \mathcal{C}\left([0,T);\mathcal{C}(\RR^N)\cap L^\infty(\RR^N)\right)$ of problem \eqref{C}.
\end{theorem}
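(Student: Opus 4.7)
The plan is to proceed by a contraction argument applied to the integral formulation of the Cauchy problem, in the spirit of the Dirichlet case (Theorem \ref{existence}). First I set $X_{T_0}=\mathcal C([0,T_0];\mathcal C(\RR^N)\cap L^\infty(\RR^N))$ equipped with the norm $\|u\|_{X_{T_0}}=\sup_{t\in[0,T_0]}\|u(\cdot,t)\|_{L^\infty(\RR^N)}$, and define the operator
$$
\Phi[u](x,t):=u_0(x)+\int_0^t\!\int_{\RR^N}\!J(x-y)\bigl(u(y,\tau)-u(x,\tau)\bigr)\mathcal G\bigl(x,u(y,\tau)-u(x,\tau)\bigr)\,dy\,d\tau.
$$
A fixed point of $\Phi$ in $X_{T_0}$ is, by definition, a solution to \eqref{C} on $[0,T_0]$.

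The key ingredient is that the map $s\mapsto G(x,s):=s\,\mathcal G(x,s)$ is globally Lipschitz in $s$, uniformly in $x$, with constant $\alpha_2$; indeed, condition \eqref{Gi} reads exactly as $\alpha_1\le\frac{G(x,s)-G(x,\sigma)}{s-\sigma}\le\alpha_2$. Using $\int_{\RR^N}J=1$, this yields
$$
\bigl|\Phi[u](x,t)-\Phi[v](x,t)\bigr|\le \alpha_2\int_0^t\!\int_{\RR^N}\!J(x-y)\bigl(|u(y,\tau)-v(y,\tau)|+|u(x,\tau)-v(x,\tau)|\bigr)dy\,d\tau\le 2\alpha_2 T_0\|u-v\|_{X_{T_0}}.
$$
Choosing $T_0<1/(2\alpha_2)$ makes $\Phi$ a strict contraction on $X_{T_0}$; the Banach fixed-point theorem then provides existence and uniqueness on $[0,T_0]$. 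Preservation of the class $\mathcal C(\RR^N)\cap L^\infty(\RR^N)$ under $\Phi$ uses that $J$ is continuous with compact support and $\mathcal G(x,\cdot)$ is continuous (so the integrand is continuous in $x$ when $u$ is), together with dominated convergence in $\tau$.

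To reach an arbitrary $T>0$ I establish an a priori bound independent of $T_0$ so that the local solution can be concatenated. Applying $|G(x,s)|\le\alpha_2|s|$ in the integral equation gives
$$
\|u(\cdot,t)\|_{L^\infty(\RR^N)}\le \|u_0\|_{L^\infty(\RR^N)}+2\alpha_2\int_0^t\|u(\cdot,\tau)\|_{L^\infty(\RR^N)}\,d\tau,
$$
and Gronwall's lemma yields $\|u(\cdot,t)\|_{L^\infty}\le\|u_0\|_{L^\infty}e^{2\alpha_2 t}$. This prevents blow-up in finite time, so one iterates the local construction on the intervals $[kT_0,(k+1)T_0]$, using $u(\cdot,kT_0)$ as the new initial datum at each step, to obtain a unique bounded solution on $[0,T)$ for any $T$. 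Uniqueness globally follows from the same Gronwall argument applied to the difference of two solutions.

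The principal technical obstacle is propagating the continuity in $x$ (and not merely boundedness) through the iteration on the whole of $\RR^N$, since the underlying domain is unbounded; this is handled by noting that, thanks to the compact support of $J$, $\Phi[u](x,t)-\Phi[u](x',t)$ only involves integration over $B(x,R)\cup B(x',R)$ with $R=\operatorname{diam}(\supp J)$, so uniform continuity of $u_0$ on compact sets together with the Lipschitz dependence of $G$ on its second variable yields the required continuity of $\Phi[u]$. The rest of the argument is routine.
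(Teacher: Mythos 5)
Your proposal is correct and follows essentially the same strategy as the paper: a Banach fixed-point argument for the integral operator, with the contraction constant coming from the Lipschitz bound $\alpha_2$ that condition \eqref{Gi} gives for $s\mapsto s\,\mathcal G(x,s)$, together with $\int_{\RR^N}J=1$. The only implementation difference is that you contract on a short interval $T_0<1/(2\alpha_2)$ and then continue via the Gronwall bound $\|u(\cdot,t)\|_{L^\infty}\le\|u_0\|_{L^\infty}e^{2\alpha_2 t}$, whereas the paper uses the exponentially weighted norm $|||w|||=\max_{0\le t\le T}e^{-kMt}\|w(\cdot,t)\|_{L^\infty(\RR^N)}$ (with $M=2\alpha_2$, $k>1$) on a closed ball of radius $k\|u_0\|_{L^\infty}$, which makes the operator a $1/k$-contraction on $[0,T]$ for arbitrary $T$ in one step and so dispenses with the concatenation; both variants are standard and equally valid.
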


We continue this section proving the comparison principle for our problem. For this purpose, we first set the notion of sub and supersolution.
\begin{definition} A function $u \in \mathcal{C}^0\left([0, T ),\mathcal{C}(\RR^N)\right) \cap \mathcal{C}^1\left((0, T ),\mathcal{C}(\RR^N)\right)$ is a subsolution of problem \eqref{C} if it satisfies
\begin{equation*}
\left\{
\begin{array}{ll}
\negthickspace u_t(x,t)\negthickspace \leq \negthickspace\displaystyle\int_{\RR^N}\negthickspace\negthickspace\negthickspace J(x-y)u(y;x,t)\mathcal{G}(x,u(y;x,t))dy,
& \mbox{ in } \RR^N \times (0,T),\\[10pt]
u(x,0)\leq u_0(x),
& \mbox{ in } \RR^N.
\end{array}\right.
\end{equation*}
As usual, a supersolution is defined analogously by replacing $``\leq"$ by $``\geq"$.
\end{definition}

Next we state the comparison principle in this framework.

\begin{theorem}{[Comparison Principle]}\label{C-P_Cauchy} Let $u,v$ be a subsolution and supersolution respectively of problem \eqref{C} with initial data $u_0\in \mathcal{ C } (\RR^N) \cap L^{\infty} (\RR^N)$ and $v_0\in \mathcal{ C } (\RR^N) \cap L^{\infty} (\RR^N)$, respectively, such that  $u_0 \leq v_0$   in $\RR^N$. Then $u\leq v$ in $\RR^N \times (0,T)$.
\end{theorem}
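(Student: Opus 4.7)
The plan is to set $W := u - v$ and show $\sup_{x \in \RR^N} W(x,t) \leq 0$ for all $t \in [0,T)$. Writing $\Phi(x,s) := s\,\mathcal{G}(x,s)$, condition \eqref{Gi} says exactly that $s \mapsto \Phi(x,s)$ is Lipschitz with constants $\alpha_1$ and $\alpha_2$, so one can factor
$$\Phi(x, u(y;x,t)) - \Phi(x, v(y;x,t)) = b(x,y,t)\bigl[W(y,t) - W(x,t)\bigr]$$
for a measurable $b$ satisfying $\alpha_1 \leq b(x,y,t) \leq \alpha_2$, using the identity $u(y;x,t) - v(y;x,t) = W(y,t) - W(x,t)$. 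Subtracting the sub- and supersolution inequalities therefore yields the linear nonlocal inequality
$$W_t(x,t) \leq \int_{\RR^N} J(x-y)\,b(x,y,t)\bigl[W(y,t) - W(x,t)\bigr]\,dy.$$

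Because $u_0, v_0 \in L^\infty(\RR^N)$, both $u$ and $v$ stay uniformly bounded on $\RR^N \times [0,T)$, so
$$M(t) := \sup_{y \in \RR^N} W(y,t)$$
is finite for every $t$. Since $b \geq 0$ and $W(y,t) \leq M(t)$, one has
$$b(x,y,t)\bigl[W(y,t) - W(x,t)\bigr] \leq \alpha_2\bigl(W(y,t) - W(x,t)\bigr)^+ \leq \alpha_2\bigl(M(t) - W(x,t)\bigr),$$
the last step using $M(t) \geq W(x,t)$. Together with $\int_{\RR^N} J = 1$ this gives the pointwise differential inequality
$$W_t(x,t) + \alpha_2 W(x,t) \leq \alpha_2 M(t) \qquad \mbox{in } \RR^N \times (0,T).$$

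Multiplication by $e^{\alpha_2 t}$ and integration on $[0,t]$, combined with the hypothesis $W(x,0) = u_0(x) - v_0(x) \leq 0$ and the elementary bound $e^{-\alpha_2(t-s)} \leq 1$, yields
$$W(x,t) \leq e^{-\alpha_2 t} W(x,0) + \alpha_2 \int_0^t e^{-\alpha_2(t-s)} M(s)\,ds \leq \alpha_2 \int_0^t M(s)^+\,ds.$$
Taking the supremum in $x$ gives the closed integral inequality
$$M(t)^+ \leq \alpha_2 \int_0^t M(s)^+\,ds,$$
and Gronwall's lemma forces $M(t)^+ \equiv 0$ on $[0,T)$, that is, $u \leq v$.

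The main obstacle, absent in the bounded Dirichlet case of Theorem~\ref{C-P}, is that on $\RR^N$ the supremum of $W$ need not be attained and no boundary condition is available to pin $W$ down, so a direct maximum-principle argument at an extremum is unavailable. The device above circumvents this by producing a scalar Gronwall-type inequality for $M(t)$ itself, avoiding any reference to a (possibly non-existent) pointwise maximizer and relying only on the Lipschitz linearization built into \eqref{Gi} and on the uniform $L^\infty$ bound furnished by Theorem~\ref{existence_Cauchy}.
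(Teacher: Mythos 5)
Your argument is correct, but it takes a genuinely different route from the paper's. Both proofs begin the same way: set $w=u-v$ and linearize the nonlinearity through the quotient $\psi$ of \eqref{h} (your $b$), which condition \eqref{Gi} confines to $[\alpha_1,\alpha_2]$. From there the paper works in $L^1$: it multiplies the inequality for $w$ by the indicator of $\{w\ge 0\}$, discards a sign-definite term, integrates over $\RR^N$ using $\int_{\RR^N}J=1$, and runs Gronwall on $k(t)=\int_{\RR^N}w_+(x,t)\,dx$. You instead work in $L^\infty$: from $b\,[W(y,t)-W(x,t)]\le\alpha_2\,(M(t)-W(x,t))$ with $M(t)=\sup_xW(x,t)$ you derive the scalar inequality $W_t+\alpha_2W\le\alpha_2M(t)$, integrate the factor $e^{\alpha_2 t}$, and close a Gronwall inequality for $M(t)^+$; all steps check out. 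Each approach buys something: the paper's $L^1$ argument silently requires $w_+(\cdot,t)\in L^1(\RR^N)$, which the definition of sub/supersolution does not provide, whereas yours only needs $\sup_x(u-v)(\cdot,t)<\infty$, a weaker and more natural requirement given that Theorem \ref{existence_Cauchy} produces solutions bounded on each $[0,T]$. One caveat: your assertion that ``because $u_0,v_0\in L^\infty(\RR^N)$, both $u$ and $v$ stay uniformly bounded'' is not a consequence of the definitions --- a sub/supersolution of \eqref{C} is only assumed to lie in $\mathcal C([0,T);\mathcal C(\RR^N))$, so boundedness in $x$ at positive times is an additional (mild, standard) hypothesis that you should state explicitly; the paper's own proof rests on an analogous unstated integrability assumption.
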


Now, we prove that given a classical solution  (i.e., $v \in \mathcal{C}^{2+\alpha,1+\alpha/2}\left(\RR^N \times [0,T]    \right)$) of the parabolic problem with a quadratic gradient term of the form
\begin{equation}\label{quasilinear_Cauchy}
\left\{
\begin{array}{ll}
v_t(x,t)=\Delta  v(x,t)+\mu(x) |\nabla   v(x,t)|^2
& \mbox{ in } \RR^N \times (0,T) \\[1.5 ex]v(x,0)=v_0(x)
& \mbox{ in } \RR^N ,
\end{array}\right.
\end{equation}
with  $v_0 \in \mathcal{C}(\RR^N)\cap L^\infty(\RR^N)$ and $\mu (x) \in \mathcal{C}^{\alpha}\left(\RR^N     \right) \cap L^{\infty} (\RR^N)$, it can be approximated by a solution of the nonlocal problem
\begin{equation}\label{P_e_Cauchy}
\left\{
\begin{array}{ll}
\negthickspace u_t^\varepsilon=\displaystyle \frac{C(x)}{\varepsilon^2}  \displaystyle \ir \negthickspace \negthickspace J_\varepsilon(x-y)u^\varepsilon(y;x,t)\,\mathcal{G}(x,u^\varepsilon(y;x,t))dy
& \mbox{ in } \RR^N \times (0,T),\\[1.5 ex]
\negthickspace u^\varepsilon (x,0)=v_0(x),
& \mbox{ in } \RR^N ,
\end{array}
\right.
\end{equation}
such that $\displaystyle \frac{2\mathcal{G}_s^\prime(x,0)}{\mathcal{G}(x,0)}=\mu(x)$. As usual $C(x)^{-1}=\frac{1}{2}C(J)\mathcal{G}(x,0)\neq 0$ and  $J_\varepsilon (s)=\frac{1}{\varepsilon^N}J(\frac{s}{\varepsilon})$.


\medskip

\begin{theorem}\label{principal_Cauchy}
Let $v$ be a classical solution of quasilinear differential equation \eqref{quasilinear_Cauchy}. Let, for a given $\varepsilon > 0$, $u^\varepsilon$ be the solution to \eqref{P_e_Cauchy}, with the same initial datum $v_0 \in \mathcal{C}(\RR^N)\cap L^\infty(\RR^N)$. Then, we have

$$
\lim_{\varepsilon \to 0}  \quad \sup_{t\in [0,T]} \big\| u^\varepsilon(\cdot,t)-v(\cdot,t)\big\|_{L^\infty(\RR^N)}= 0 \,.
$$
\end{theorem}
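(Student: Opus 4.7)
\smallskip

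\textbf{Plan of proof.} The strategy is the classical \emph{consistency plus stability} scheme, adapted to the nonlocal setting and based on the comparison principle of Theorem \ref{C-P_Cauchy}. Namely, the classical solution $v$ of \eqref{quasilinear_Cauchy} already satisfies the rescaled nonlocal equation \eqref{P_e_Cauchy} up to a small residual; perturbing $v$ by a term linear in $t$ produces sub- and supersolutions of \eqref{P_e_Cauchy} with the same initial data $v_0$, and the comparison principle then squeezes $u^\varepsilon$ between them.

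\smallskip

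\textbf{Step 1 (consistency).} Given the smooth solution $v$, define the residual
$$
R^\varepsilon(x,t):= \frac{C(x)}{\varepsilon^{2}}\int_{\RR^N} J_\varepsilon(x-y)\,\bigl(v(y,t)-v(x,t)\bigr)\,\mathcal{G}\bigl(x,v(y,t)-v(x,t)\bigr)\,dy \;-\;v_t(x,t).
$$
Change variables $y=x+\varepsilon z$, write $J_\varepsilon(x-y)\,dy=J(z)\,dz$, and Taylor expand in $\varepsilon$:
$$
v(y,t)-v(x,t)=\varepsilon\,\nabla v(x,t)\!\cdot\! z+\tfrac{\varepsilon^2}{2}\,z^{T}D^{2}v(x,t)\,z+O(\varepsilon^{2+\alpha}),
$$
$$
\mathcal{G}(x,s)=\mathcal{G}(x,0)+\mathcal{G}'_{s}(x,0)\,s+O(|s|^{1+\alpha}),
$$
where the $O$-terms are controlled by the $\mathcal C^{2+\alpha}$ norm of $v$ and the $\mathcal C^{1+\alpha}$ norm of $\mathcal{G}(x,\cdot)$. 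Multiplying out, the integrand becomes
$$
\mathcal{G}(x,0)\!\left[\varepsilon\,\nabla v\!\cdot\! z+\tfrac{\varepsilon^{2}}{2}z^{T}D^{2}v\,z\right]+\mathcal{G}'_{s}(x,0)\,\varepsilon^{2}(\nabla v\!\cdot\! z)^{2}+O(\varepsilon^{2+\alpha}).
$$
Using the radial symmetry of $J$ to kill odd moments and the identities
$$
\int_{\RR^N}\! J(z)\,z_{i}z_{j}\,dz=C(J)\,\delta_{ij},\qquad \int_{\RR^N}\!J(z)\,z\,dz=0,
$$
the integral equals
$$
\tfrac{\varepsilon^{2}}{2}\,C(J)\,\mathcal{G}(x,0)\,\Delta v\;+\;\varepsilon^{2}\,C(J)\,\mathcal{G}'_{s}(x,0)\,|\nabla v|^{2}\;+\;O(\varepsilon^{2+\alpha}).
$$
Dividing by $\varepsilon^{2}$ and multiplying by $C(x)=2/[C(J)\,\mathcal{G}(x,0)]$, the choice $\mu(x)=2\mathcal{G}'_{s}(x,0)/\mathcal{G}(x,0)$ yields exactly $\Delta v+\mu(x)|\nabla v|^{2}=v_{t}(x,t)$ plus an error, so
$$
\sup_{(x,t)\in\RR^N\times[0,T]}|R^\varepsilon(x,t)|\;\leq\; K\,\varepsilon^{\alpha},
$$
with $K$ depending only on the $\mathcal C^{2+\alpha,1+\alpha/2}$ bounds of $v$, the $\mathcal C^{1+\alpha}$ bounds of $\mathcal{G}$, and $J$.

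\smallskip

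\textbf{Step 2 (stability via comparison).} Set $\overline{v}^{\,\varepsilon}(x,t):=v(x,t)+K\varepsilon^{\alpha}\,t$ and $\underline{v}^{\,\varepsilon}(x,t):=v(x,t)-K\varepsilon^{\alpha}\,t$. Since the nonlocal operator depends only on increments $v(y,t)-v(x,t)$, adding a spatially constant function leaves it unchanged; hence
$$
\partial_{t}\overline{v}^{\,\varepsilon}-\frac{C(x)}{\varepsilon^{2}}\!\int_{\RR^N}\! J_\varepsilon(x-y)\,\overline{v}^{\,\varepsilon}(y;x,t)\,\mathcal{G}(x,\overline{v}^{\,\varepsilon}(y;x,t))\,dy \;=\;K\varepsilon^{\alpha}-R^{\varepsilon}(x,t)\;\geq\;0,
$$
so $\overline{v}^{\,\varepsilon}$ is a supersolution of \eqref{P_e_Cauchy}, and analogously $\underline{v}^{\,\varepsilon}$ is a subsolution. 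Both share the initial datum $v_{0}$ with $u^{\varepsilon}$ and belong to $\mathcal C([0,T);\mathcal C(\RR^N)\cap L^{\infty}(\RR^N))$. Theorem \ref{C-P_Cauchy} therefore gives
$$
\underline{v}^{\,\varepsilon}(x,t)\;\leq\;u^{\varepsilon}(x,t)\;\leq\;\overline{v}^{\,\varepsilon}(x,t)\quad\text{in }\RR^N\times[0,T],
$$
equivalently $\|u^{\varepsilon}(\cdot,t)-v(\cdot,t)\|_{L^{\infty}(\RR^N)}\leq K\varepsilon^{\alpha}\,T$ for every $t\in[0,T]$, whence the uniform limit claimed in the statement.

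\smallskip

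\textbf{Main obstacle.} The delicate point is not the algebraic expansion but rather making the residual estimate in Step~1 \emph{globally uniform in $x\in\RR^N$}. This requires that the global $\mathcal C^{2+\alpha,1+\alpha/2}$-bound of the classical solution $v$ (together with the global $\mathcal C^{1+\alpha}$-bound of $\mathcal{G}$ in the $s$-variable, uniformly in $x$) indeed controls the remainders of both Taylor expansions; otherwise $K$ in Step~1 would depend on $x$ and Step~2 would only yield local, not global uniform, convergence. Under the hypotheses of the theorem (boundedness of $v_{0}$ and of $\mu$, classical $\mathcal C^{2+\alpha,1+\alpha/2}$-regularity of $v$ on $\RR^N\times[0,T]$) this global control is available, and the argument closes as above.
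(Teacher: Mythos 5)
Your proposal is correct and follows essentially the same route as the paper: the consistency estimate of Step 1 is exactly the content of the paper's Lemma \ref{approx} (Taylor expansion after the change of variables $y=x-\varepsilon z$, with the radial symmetry of $J$ killing the odd moments), and the stability step is the comparison principle with the linear-in-time barriers $\pm K\varepsilon^{\alpha}t$. The only (cosmetic) difference is that the paper sets $w^{\varepsilon}=v-u^{\varepsilon}$ and compares $w^{\varepsilon}$ with $\pm K\varepsilon^{\alpha}t$, whereas you compare $u^{\varepsilon}$ directly with $v\pm K\varepsilon^{\alpha}t$, which if anything applies Theorem \ref{C-P_Cauchy} more literally to the original problem.
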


\medskip

Finally, we study the asymptotic behavior of the solutions associated to the Cauchy problem.

Our result depends on the nature of $\mathcal{G}$, i.e. if it is of absorption or reaction type.

%
%
%
%
%
%

\medskip


Summarizing, we obtain the following results:

\bigskip

%
%

\begin{theorem}\label{asymp Cauchy}
 For $N\geq 1$, Llet $u$ be a solution of Cauchy problem \eqref{C}   satisfying \eqref{mu<0}
and positive initial datum  $u_0\in L^1(\RR^N)\cap L^\infty(\RR^N) \cap \mathcal{C} (\RR^N) $. Then there exists $C=C(J,N,\beta,q)>0$ such that
\[
\|u(\cdot,t)\|_{L^q (\RR^N)} \leq C\|u_0\|_{L^1 (\RR^N)}\,t^{-\frac{N}{2}\left(1-\frac{1}{q}\right)}, \textrm{ for any } \,q\in[1,\infty),
\]
for  $t$ sufficiently large.
\end{theorem}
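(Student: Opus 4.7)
The approach is to exploit the absorption condition \eqref{mu<0} to dominate $u$ pointwise by the solution of a linear nonlocal heat equation, whose large-time $L^q$ decay is classical, and then transfer the estimate back via the comparison principle (Theorem \ref{C-P_Cauchy}). As a preliminary, since $u_0\geq 0$ and the zero function trivially solves \eqref{C} (one has $\mathcal{G}(x,0)\cdot 0=0$), Theorem \ref{C-P_Cauchy} gives $u(x,t)\geq 0$ throughout $\RR^N\times(0,T)$, which will ensure $\|u\|_{L^q}=\||u|\|_{L^q}\leq \|v\|_{L^q}$ once the upper bound is established.

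Next, I would apply \eqref{mu<0} with $s=u(y;x,t)$ and multiply by $J(x-y)\geq 0$ to get, pointwise in $y$,
\[
J(x-y)\,u(y;x,t)\,\mathcal{G}(x,u(y;x,t))\leq \beta\, J(x-y)\,u(y;x,t).
\]
Integrating over $y$ and using $\int_{\RR^N}J=1$ yields
\[
u_t(x,t)\leq \beta\bigl((J\ast u)(x,t)-u(x,t)\bigr),
\]
so $u$ is a subsolution to the linear Cauchy problem $v_t=\beta(J\ast v-v)$, $v(\cdot,0)=u_0$. Denote by $v$ its unique nonnegative solution (existence, uniqueness, and nonnegativity follow from Theorem \ref{existence_Cauchy} and Theorem \ref{C-P_Cauchy} applied to the constant kernel $\mathcal{G}\equiv\beta$). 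The comparison principle then delivers $0\leq u(x,t)\leq v(x,t)$ on $\RR^N\times(0,T)$, reducing matters to estimating $\|v(\cdot,t)\|_{L^q}$.

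The final step is to invoke the known large-time decay for the linear convolution heat equation. After rescaling $\tau=\beta t$, $w(x,\tau):=v(x,\tau/\beta)$ satisfies $w_\tau=J\ast w-w$ with $w(\cdot,0)=u_0$. Fourier analysis --- using that $J$ is radial with unit mass, with $\hat{J}(\xi)=1-\tfrac{C(J)}{2N}|\xi|^2+o(|\xi|^2)$ near the origin and $|\hat{J}(\xi)|<1$ for $\xi\neq 0$ by Riemann--Lebesgue --- yields, for every $q\in[1,\infty)$ and $\tau$ sufficiently large,
\[
\|w(\cdot,\tau)\|_{L^q(\RR^N)}\leq C(J,N,q)\,\|u_0\|_{L^1(\RR^N)}\,\tau^{-\frac{N}{2}\left(1-\frac{1}{q}\right)}.
\]
Undoing the rescaling gives the same bound for $v$ with constant $C(J,N,\beta,q)$, and combining with $0\leq u\leq v$ concludes the proof. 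The main obstacle is not in the nonlinear part --- the absorption inequality reduces everything to the linear problem in one line --- but rather in citing the Fourier-type large-time decay for $w_\tau=J\ast w-w$, which is classical (see e.g.\ the results of Chasseigne--Chaves--Rossi or Ignat--Rossi already used in the Background section) but is the only ingredient external to the present paper.
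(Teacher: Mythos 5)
Your proof is correct, but it follows a genuinely different route from the paper. You reduce everything to the linear equation: the absorption condition \eqref{mu<0} gives the pointwise bound $u_t\le\beta\,(J\ast u-u)$, so $u$ is a subsolution of the problem with $\mathcal G\equiv\beta$ (which satisfies \eqref{Gi} with $\alpha_1=\alpha_2=\beta$, so Theorems \ref{existence_Cauchy} and \ref{C-P_Cauchy} do apply), and you then quote the classical $L^1$--$L^q$ decay of the linear nonlocal semigroup. The paper instead runs a self-contained energy argument: it multiplies the equation by $u^{q-1}$, symmetrizes, uses the elementary inequality of Lemma \ref{qu} to pass to $\big(u^{q/2}(y)-u^{q/2}(x)\big)^2$, applies the Fourier--Nash type inequality \eqref{GNS} of Lemma \ref{lemma}, and closes a differential inequality for $Y(t)=\|u(\cdot,t)\|_{L^q}^q$ using the $L^1$ monotonicity of Lemma \ref{norma-1}, finally interpolating to reach $q\in(1,2)$. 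Your approach is shorter and handles all $q\in[1,\infty)$ at once, but it outsources the key decay to an external reference and, more importantly, leans on a structure (domination by a linear equation plus positivity) that is special to the absorption case; the paper's energy method is the template that is reused verbatim for the reaction case in Theorem \ref{L^2 mu>0}, where no linear supersolution is available. One subtlety you should make explicit: the semigroup of $w_\tau=J\ast w-w$ does not regularize, since $w(\tau)=e^{-\tau}u_0+K_\tau\ast u_0$, so the bound you quote is really $\|w(\tau)\|_{L^q}\le e^{-\tau}\|u_0\|_{L^q}+C\|u_0\|_{L^1}\,\tau^{-\frac N2(1-\frac1q)}$; the first term is only absorbed into the second for $\tau$ large depending on $\|u_0\|_{L^q}/\|u_0\|_{L^1}$, which is where the hypothesis $u_0\in L^\infty$ and the clause ``for $t$ sufficiently large'' (with a threshold depending on $u_0$, as in the paper's own proof) are genuinely needed.
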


\begin{theorem}\label{L^2 mu>0}
For $N\geq 1$, let $u$ be a solution of of Cauchy problem \eqref{C} with $\mathcal{G}\equiv \mathcal{G}_\mu$, $0\leq \mu \in L^\infty(\RR^N)$ and positive initial datum   $u_0\in L^1(\RR^N)\cap L^\infty(\RR^N)\cap \mathcal{C}(\RR^N)$ satisfying
\begin{equation}\label{theta}
\|u_0\|_{L^\infty(\RR^N)}\|\mu\|_{L^\infty(\RR^N)}<1  \,.
\end{equation}
Then,
\[
\|u(\cdot,t)\|_{L^2(\RR^N)}^2\leq \tilde{C}\|u_0\|_{L^1(\RR^N)}\, t^{-\frac{N}{2}},
\]
for some $\tilde{C}=\tilde{C}(\|\mu\|_{L^\infty(\RR^N)},\|u_0\|_{L^\infty(\RR^N)},N,J)>0$ and for  $t$ sufficiently large.
\end{theorem}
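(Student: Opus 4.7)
The plan is to derive an energy-dissipation inequality for $\|u(\cdot,t)\|_{L^2}^2$ controlled by the nonlocal Dirichlet form
\[
E(u):=\iint_{\RR^N\times\RR^N} J(x-y)\big(u(y,t)-u(x,t)\big)^2\,dy\,dx,
\]
obtain a uniform $L^1$-bound on $u(\cdot,t)$, apply a Nash-type inequality for $E$, and conclude by ODE comparison. By Theorem~\ref{C-P_Cauchy} applied with the constants $0$ and $\|u_0\|_{L^\infty}$ as sub- and super-solutions one gets $0\le u(x,t)\le \|u_0\|_{L^\infty}$. I split $\mathcal G_\mu(x,s)\,s=s+g(x,s)$ with $g(x,s):=\frac{\mu(x)s^2}{2(1+\mu(x)^2s^2)}$; the hypothesis $\mu\ge 0$ gives $0\le g(x,s)\le \tfrac12\|\mu\|_{L^\infty}s^2$. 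Multiplying \eqref{C} by $u$, integrating in $x$, and symmetrising the linear part by the symmetry of $J$ gives
\[
\frac{d}{dt}\tfrac12\|u\|_{L^2}^2
=-\tfrac12 E(u)+\iint J(x-y)\,g\bigl(x,u(y;x,t)\bigr)\,u(x,t)\,dy\,dx
\le -\tfrac{\theta}{2}\,E(u),
\]
where $\theta:=1-\|\mu\|_{L^\infty}\|u_0\|_{L^\infty}>0$ by \eqref{theta}. Integration in time then yields $\theta\int_0^\infty E(u)\,d\tau \le \|u_0\|_{L^2}^2 \le \|u_0\|_{L^1}\|u_0\|_{L^\infty}$. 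Integrating \eqref{C} in $x$ and symmetrising gives $\tfrac{d}{dt}\|u\|_{L^1}\le \tfrac12\|\mu\|_{L^\infty} E(u)$, and combining this with the previous line produces the uniform bound $\|u(\cdot,t)\|_{L^1}\le C_1\|u_0\|_{L^1}$ with $C_1$ depending only on $\|\mu\|_{L^\infty}$ and $\|u_0\|_{L^\infty}$.

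The crucial analytic step is the Nash-type inequality for the nonlocal form,
\[
\|u\|_{L^2}^{2+4/N}\le C_N\,E(u)\,\|u\|_{L^1}^{4/N}, \qquad u\in L^1\cap L^2(\RR^N).
\]
By Plancherel, $E(u)=2\int_{\RR^N}\bigl(1-\hat J(\xi)\bigr)|\hat u(\xi)|^2\,d\xi$. Hypothesis \eqref{(H)} implies that $\hat J$ is smooth and radial with $\hat J(0)=1$ and Hessian at the origin equal, up to a sign, to the positive-definite matrix $\int J(z)z_i z_j\,dz$, so $1-\hat J(\xi)\ge c|\xi|^2$ near $0$; since $\hat J$ is continuous and $\hat J(\xi)\to 0$ as $|\xi|\to\infty$ by Riemann-Lebesgue, one obtains $1-\hat J(\xi)\ge c\min(|\xi|^2,1)$ globally, for some $c>0$. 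Splitting the Fourier integral at $|\xi|=\delta$, using $\int_{|\xi|<\delta}|\hat u|^2\le \|\hat u\|_\infty^2|B_\delta|\le C\|u\|_{L^1}^2\delta^N$ and $\int_{|\xi|\ge\delta}|\hat u|^2\le (c\delta^2)^{-1} E(u)/2$, and optimising in $\delta$, produces the displayed inequality.

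Inserting the uniform $L^1$-bound into the Nash inequality and feeding the result into the energy estimate yields
\[
\frac{d}{dt}\|u\|_{L^2}^2\le -A^{-1}\|u\|_{L^2}^{2+4/N}, \qquad A:=\frac{C_N(C_1\|u_0\|_{L^1})^{4/N}}{\theta}.
\]
Writing $y(t):=\|u(\cdot,t)\|_{L^2}^2$, the ODI $y'\le -y^{1+2/N}/A$ integrates to $y(t)^{-2/N}\ge y(0)^{-2/N}+\frac{2t}{NA}$, whence $y(t)\le (NA/2)^{N/2}t^{-N/2}$ for $t$ sufficiently large, which gives the claimed $t^{-N/2}$ decay rate with the stated dependence of $\tilde C$ on $\|\mu\|_{L^\infty},\|u_0\|_{L^\infty},N,J$. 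The main obstacle is the Nash-type inequality, and more specifically the lower bound $1-\hat J(\xi)\ge c\min(|\xi|^2,1)$, which draws on the full strength of \eqref{(H)} (smoothness, compact support, and positive finite second moment of $J$); the remainder is a routine parabolic energy computation in which the smallness condition \eqref{theta} enters only to prevent the reaction term $g$ from swallowing the linear dissipation, and the $L^\infty$-bound from Theorem~\ref{C-P_Cauchy} plays the role of the sign/cancellation structure that is absent because $\mathcal G_\mu$ is not symmetric.
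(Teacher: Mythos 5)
Your strategy coincides with the paper's (Lemma \ref{cetraro} plus Lemma \ref{lemma}): the energy inequality $\frac{d}{dt}\|u\|_{L^2}^2\le -(1-\theta)E(u)$ using the pointwise bound $0\le u\le\|u_0\|_{L^\infty}$ and the smallness condition \eqref{theta}, the uniform $L^1$ bound obtained by dominating $\frac{d}{dt}\|u\|_{L^1}$ by the dissipation, a Nash-type inequality for the nonlocal form, and an ODE comparison. The genuine gap is in the Nash-type inequality as you state it: the unconditional bound $\|u\|_{L^2}^{2+4/N}\le C_N\,E(u)\,\|u\|_{L^1}^{4/N}$ is \emph{false} for this form. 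Since $0\le 1-\hat J\le 2$, one always has $E(u)\le 4\|u\|_{L^2}^2$, so your inequality would force $\|u\|_{L^2}\le C\|u\|_{L^1}$ for all $u\in L^1\cap L^2$; taking $u_\lambda(x)=\lambda^N\phi(\lambda x)$ with $\lambda\to\infty$ gives a contradiction. Your own derivation shows where this breaks: the estimate $\int_{|\xi|\ge\delta}|\hat u|^2\le (c\delta^2)^{-1}E(u)/2$ only follows from $1-\hat J(\xi)\ge c\min(|\xi|^2,1)$ when $\delta\le 1$, so the optimisation in $\delta$ is constrained, and when the unconstrained optimum exceeds $1$ (i.e.\ when $E(u)\gtrsim\|u\|_{L^1}^2$) you only obtain $\|u\|_{L^2}^2\le CE(u)$. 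The correct conclusion is the two-regime inequality of the paper's Lemma \ref{lemma},
\begin{equation*}
E(u)\ \ge\ K\,\min\Big\{\|u\|_{L^{1}}^{-\frac{4}{N}}\,\|u\|_{L^{2}}^{2+\frac{4}{N}}\,,\ \|u\|_{L^{2}}^{2}\Big\}\,.
\end{equation*}

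This changes the endgame: instead of the clean ODI $y'\le -y^{1+2/N}/A$, one gets $y'\le -C\min\{\|u_0\|_{L^1}^{-4/N}y^{1+2/N},\,y\}$, and an additional step is needed before integrating — namely the paper's observation that $y$ is decreasing and must eventually satisfy $y(t)\le\|u_0\|_{L^1}^2$ for $t\ge t_0$ (otherwise the second branch forces exponential decay below that level, a contradiction), after which the first branch realises the minimum and the power-law integration goes through. This is a fixable but real omission: as written, the crucial analytic inequality you invoke does not hold, and the argument that replaces it is not merely routine bookkeeping. Everything else in your proposal — the decomposition $\mathcal G_\mu(x,s)s=s+g(x,s)$, the role of \eqref{theta}, the $L^1$ control via $\theta\int_0^\infty E(u)\,d\tau\le\|u_0\|_{L^1}\|u_0\|_{L^\infty}$ — matches the paper's proof.
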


\section{Preliminaries}

{\bf Notation. }   Throughout this paper, we always use the following notation:\\
 we denote in a short way  $u(y;x,t)=u(y,t)-u(x,t)$. Moreover the time variable will always get values between $0$ and $T$, with $T>0$. As far as the kernel $J$ is concerned, we assume that it is defined as in $\eqref{(H)}$ and such that  $\mathcal{G}$   satisfies  \eqref{Gi} and    $C=2C(J)^{-1}$,
      $J_\varepsilon (s)=\frac{1}{\varepsilon^N}J(\frac{s}{\varepsilon})$.

\medskip

As far as the the function $\mathcal{G}(x,s)$ is concerned, we
observe  that, for a function $\mathcal{G}$ differentiable with respect to $s$ we have, thanks to  the Mean Value Theorem, that
\begin{equation}\label{Gi2}
\alpha_1\leq\mathcal{G}_s^\prime(x,s)s+\mathcal{G}(x,s)\leq \alpha_2, \qquad \hbox{for any  }s\in\RR \hbox{ a.e. in  }x\in \RR^N.
\end{equation}

Moreover, if  $\mathcal{G}$ is differentiable with respect to $s$, condition \eqref{Gi} is equivalent to define $\psi:\RR^N\times \RR \times \RR \to \RR$ with
$$
0<\alpha_1\leq \psi(x,s,\sigma)\leq \alpha_2 \qquad \mbox{ for a.e. x } \in \Omega, \quad \forall s,\sigma \in \RR,
$$ such that
\begin{equation}\label{h}
\psi (x,s,\sigma)=
\left\{
\begin{array}{ll}
\displaystyle
 \frac{\mathcal{G}(x,s)s-\mathcal{G}(x,\sigma)\sigma  }{s-\sigma} \qquad & \mbox{ if } \quad s\neq \sigma\,,
\\[1.5 ex] \displaystyle
  \mathcal{G}_s^\prime(x,s)s+\mathcal{G}(x,s)
& \mbox{ if } \quad s= \sigma.
\end{array}
\right.
\end{equation}

We also remark that, in particular, condition \eqref{Gi} implies $\mathcal{G}(x,0)\neq 0$ for any  $x\in \RR^N$.

\medskip

Here, we state the following technical result  which   allow us to see that the function defined in \eqref{G_mu} satisfies the basic condition \eqref{Gi}.

\begin{proposition}\label{m-condition}
Let $p,q$ and $k$ be real numbers, then the following properties hold true
$$
\frac{3}{4}\leq 1+\frac{kp}{2(1+k^2p^2)}\leq \frac{5}{4}\,,
$$
$$
p  \left[1+\frac{k p}{2(1+k^2p^2)}\right] -q  \left[1+\frac{k q}{2(1+k^2q^2)}\right]= (p-q) \left[1+\frac{k(p+q)}{2(1+k^2p^2)(1+k^2q^2) } \right]\,,
$$
$$
1-\frac{3\sqrt{3}}{16}\leq 1+\displaystyle \frac{k(p+q)}{2(1+k^2p^2)(1+k^2q^2)} \leq 1+\frac{3\sqrt{3}}{16}\,.
$$
Moreover, for any measurable function $\mu:\RR^N\to \RR$, the function defined by $\displaystyle \mathcal{G}_\mu(x,s)=1+\frac{\mu(x) s}{2(1+\mu(x)^2s^2)}$ satisfies the following conditions
\begin{enumerate}
\item[(i)] $\left(1-\frac{3\sqrt{3}}{16}\right)(s-\sigma)\le \mathcal{G}_\mu(x,s)s-\mathcal{G}_\mu(x,\sigma)\sigma\le \left(1+\frac{3\sqrt{3}}{16}\right)(s-\sigma),$
 \,\, for $s> \sigma$, $\,x\in\RR^N$;
  \item[(ii)] if  $\mu \geq 0$, then $\mathcal{G}_\mu(x,s)s\leq s$, for any $(x,s)\in \RR^N\times\RR$;
\item[(iii)] if   $\mu \leq 0$    then $\mathcal{G}_\mu(x,s)s\geq s$, for any $(x,s)\in \RR^N\times\RR$.
\end{enumerate}
\end{proposition}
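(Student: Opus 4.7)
The proposition is a bundle of elementary algebraic inequalities; once the three scalar bounds and the factorization identity are established, the four properties of $\mathcal{G}_\mu$ drop out by direct substitution. My plan is to handle the scalar statements in the order written and reserve the main effort for the third inequality, which is a two-variable optimization.

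\emph{First bound and identity.} For the first inequality I set $t=kp$ and study $f(t)=\tfrac{t}{1+t^{2}}$; its derivative $(1-t^{2})/(1+t^{2})^{2}$ vanishes at $t=\pm 1$, giving $f(\pm 1)=\pm 1/2$, and these are global extrema since $f(t)\to 0$ as $|t|\to\infty$. Dividing by $2$ and adding $1$ yields $3/4\le 1+\tfrac{kp}{2(1+k^{2}p^{2})}\le 5/4$. For the identity I simply put the two summands of the left-hand side over the common denominator $(1+k^{2}p^{2})(1+k^{2}q^{2})$; the key cancellation is $p^{2}(1+k^{2}q^{2})-q^{2}(1+k^{2}p^{2})=p^{2}-q^{2}$ (the $k^{2}p^{2}q^{2}$ terms disappear), so factoring out $(p-q)$ gives precisely the stated expression.

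\emph{Second bound — the main obstacle.} Setting $a=kp$ and $b=kq$, I must show
\[
H(a,b):=\frac{a+b}{(1+a^{2})(1+b^{2})}\in\Bigl[-\tfrac{3\sqrt{3}}{8},\,\tfrac{3\sqrt{3}}{8}\Bigr].
\]
Since $H$ is continuous on $\RR^{2}$ and tends to $0$ as $\max(|a|,|b|)\to\infty$, every extremum is attained at an interior critical point. The system $\partial_{a}H=0=\partial_{b}H$ reduces, after clearing positive denominators, to $1-a^{2}-2ab=0=1-b^{2}-2ab$, which forces $a^{2}=b^{2}$. If $a=-b$ then $H=0$; if $a=b$ then $3a^{2}=1$, so $a=\pm 1/\sqrt{3}$, and substituting back gives $H=\pm 3\sqrt{3}/8$. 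Dividing by $2$ and adding $1$ delivers the stated bracket.

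\emph{Properties of $\mathcal{G}_\mu$.} For (i) I apply the factorization identity with $k=\mu(x)$, $p=s$, $q=\sigma$; the resulting bracket is $1+\tfrac{1}{2}H(\mu(x)s,\mu(x)\sigma)$, which by the second bound lies in $[1-\tfrac{3\sqrt 3}{16},\,1+\tfrac{3\sqrt 3}{16}]$, and multiplying by $(s-\sigma)>0$ gives the inequality. Properties (ii) and (iii) follow immediately from the identity
\[
\mathcal{G}_\mu(x,s)\,s-s=\frac{\mu(x)\,s^{2}}{2\bigl(1+\mu(x)^{2}s^{2}\bigr)},
\]
whose sign is determined by that of $\mu(x)$, since the factor $s^{2}/(1+\mu(x)^{2}s^{2})$ is nonnegative for every $s\in\RR$.
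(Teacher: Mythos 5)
Your argument follows essentially the same route as the paper's for the scalar inequalities: the first two displays are elementary, and the third is reduced to locating the extrema of a two-variable rational function. Your treatment is in fact more careful than the paper's, which merely asserts that $f(x,y)=\frac{|x|+|y|}{(1+x^2)(1+y^2)}$ attains its maximum $\frac{3\sqrt3}{8}$ at $(\frac13,\frac13)$ --- a typo, since the maximizer is $(\frac{1}{\sqrt3},\frac{1}{\sqrt3})$, exactly the critical point your computation produces --- and your version without absolute values yields the upper and lower bounds simultaneously. The factorization identity, the first bound, and the deduction of (i) from the identity plus the third bound are all correct and match the paper's (unwritten) intent.

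The one point you must confront is (ii)--(iii). Your identity $\mathcal{G}_\mu(x,s)s-s=\frac{\mu(x)s^2}{2(1+\mu(x)^2s^2)}$ is correct, but read off its consequence: for $\mu\ge0$ the right-hand side is nonnegative, so it gives $\mathcal{G}_\mu(x,s)s\ge s$, which is the \emph{opposite} of the stated (ii); similarly for (iii). The statement as printed has the hypotheses $\mu\ge0$ and $\mu\le0$ interchanged: the paper's own proof, via the case analysis of where $\mathcal{G}_\mu\le1$ and where $\mathcal{G}_\mu\ge1$ according to the signs of $\mu$ and $s$, likewise establishes $\mathcal{G}_\mu(x,s)s\le s$ for $\mu\le0$ and $\mathcal{G}_\mu(x,s)s\ge s$ for $\mu\ge0$, which is also what the absorption/reaction dichotomy \eqref{mu<0}--\eqref{mu>0} used later requires. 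So your identity is the right tool, but the phrase ``follow immediately'' hides a sign reversal: as written, your proof establishes the corrected versions of (ii) and (iii), not the literal ones, and you should state the discrepancy explicitly rather than assert that the printed inequalities follow.
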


\begin{proof} The first two inequalities are  straightforward while for the third one   we just remark that the function given by
\[
f(x,y)=\frac{|x|+|y|}{(1+x^2)(1+y^2)}
\]
attains its maximum $\frac{3\sqrt 3}8$ at the point $(\frac13,\frac13)$.\\
Now,   $(i)$ is a consequence of the previous inequalities. Conditions $(ii)$ and $(iii)$ follow by the fact that
\begin{equation*}
\left\{
\begin{array}{lr}
\frac{3}{4}\leq G_\mu(x,s)\leq 1, & \mbox{ if } (x,s)\in \RR^N\times [0,\infty),
\\[1.5 ex]
1\leq G_\mu(x,s)\leq \frac{5}{4}, & \mbox{ if }  (x,s)\in \RR^N\times (-\infty,0],
\end{array}
\right.
\end{equation*}
for $\mu(x)\leq0$, and
\begin{equation*}
\left\{
\begin{array}{lr}
1\leq G_\mu(x,s)\leq \frac{5}{4}, & \mbox{ if }  (x,s)\in \RR^N\times [0,\infty),
\\[1.5 ex]
\frac{3}{4}\leq G_\mu(x,s)\leq 1, & \mbox{ if }  (x,s)\in \RR^N\times (-\infty,0],
\end{array}
\right.
\end{equation*}
for $\mu(x) \geq 0$.
\end{proof}

\begin{remark}
Let us stress that in the above result we only assume that $\mu(x)$ is measurable, without any hypotheses on its regularity.
\end{remark}

 \begin{lemma} \label{qu}    Let $q\geq1$, there exists $c(q)>0$ such that
\begin{equation}\label{B}
(a-b)(a^{q-1}-b^{q-1})\geq c(q)\,(a^{\frac{q}2}-b^{\frac{q}2})^2, \hspace{0,4cm} \textrm{ for any  }a,b\geq 0.
\end{equation}
\end{lemma}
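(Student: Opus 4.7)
The plan is to exploit the homogeneity of the inequality to reduce it to a one-variable problem. Both sides are homogeneous of degree $q$ in the pair $(a,b)$, so I may assume without loss of generality that $a\geq b\geq 0$ and, provided $a>0$, normalize by dividing through by $a^q$. Setting $t=b/a\in[0,1]$, the inequality becomes
$$(1-t)(1-t^{q-1})\geq c(q)\bigl(1-t^{q/2}\bigr)^2, \qquad t\in[0,1],$$
and the cases $a=b$ and $b=0$ are trivial (both sides vanish in the first, and the second gives $1\geq c(q)$).

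The key step is to show that the ratio
$$f(t):=\frac{(1-t)(1-t^{q-1})}{(1-t^{q/2})^2}, \qquad t\in(0,1),$$
extends to a continuous, strictly positive function on the closed interval $[0,1]$. On $(0,1)$ continuity and strict positivity are immediate, since for every $\alpha>0$ we have $0<1-t^\alpha<1$. At $t=0$ we read off $f(0^+)=1$, and at the right endpoint I would invoke the Taylor expansion $1-t^\alpha=\alpha(1-t)+O((1-t)^2)$ as $t\to 1^-$, which yields
$$\lim_{t\to 1^-}f(t)=\frac{(q-1)}{(q/2)^2}=\frac{4(q-1)}{q^2}.$$
For $q>1$ this limit is strictly positive, so the extended $f$ is a continuous strictly positive function on the compact set $[0,1]$, and therefore attains a positive minimum $c(q)>0$. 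Taking this value as the constant finishes the proof.

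I expect the only genuinely non-routine point to be the boundary behaviour at $t=1$: the numerator and denominator both vanish to second order, and one has to verify that the ratio of the leading coefficients is positive. The Taylor expansion above (equivalently, two applications of L'Hôpital's rule) handles this cleanly. The edge case $q=1$ is degenerate, since the left-hand side of the original inequality is identically zero; the statement is then valid only with $c(1)=0$, which is the natural limit of the formula $4(q-1)/q^2$.
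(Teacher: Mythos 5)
Your proof is correct and follows essentially the same route as the paper's: both reduce by homogeneity and symmetry to bounding the one-variable ratio $F(\theta)=\frac{(1-\theta)(1-\theta^{q-1})}{(1-\theta^{q/2})^2}$ from below, with the same limit $4(q-1)/q^2$ at $\theta=1$. Your use of continuity on the compact interval $[0,1]$ is a slight (and arguably cleaner) variant of the paper's claim that $F$ is decreasing, and your remark that $q=1$ is degenerate (forcing $c(1)=0$) is a correct caveat about the statement as written, though the lemma is only invoked in the paper for $q\geq 2$.
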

\begin{proof} Without loss of generality we assume $a > b$. Therefore, \eqref{B} is equivalent to prove that
the  function
\[
F(\theta)= \frac{(1-\theta)(1-\theta ^{q-1})}{(1-\theta^{\frac{q}2})^2}\qquad  \theta \in [0,1),
\]
is bounded below by a  $0<c(q)$, being $\theta=b/a$.

The result just follows by computing the derivative of $F$ and noticing that it is decreasing. Hence the minimum of $F$ is achieved at $\theta=1$, and
$
\displaystyle \lim_{\theta\to1^-} F (\theta)= 4 \frac{q-1}{q^2}\,.
$ \end{proof}

\section{Proofs of results about Dirichlet Problem} \label{4}

We start by proving the existence result.

\begin{proof}[Proof of Theorem \ref{existence}]
i) Fixed an arbitrary $T>0$, we set the Banach space $X_T=\mathcal{C}([0,T];L^1(\Omega))$ endowed with norm
\begin{equation}\label{norma}
|||v|||=\max_{0\leq t \leq T}e^{-Mt}\|v(\cdot,t)\|_{L^1(\Omega)} \,,
\end{equation}
for some $M \geq \tilde{C}=\alpha_2\,\|J\|_{L^{\infty}(\Omega)} \left(|\Omega|+|\textrm{supp}(J)|\right)$.

\medskip

  Let $\mathcal{T}:X_T\to X_T$ be the operator defined by
\[
\mathcal{T}(v)(x,t)=\int_0^t \int_{\Omega_J} J(x-y)v(y;x,\tau)\mathcal{G}(x,v(y;x,\tau))\,dy\,d\tau+u_0(x),
\]
with $v(y,t)=h(y,t)$ for $y\in \Omega_J\setminus \Omega$. Then, we prove the existence and uniqueness of solutions of \eqref{P} via the standard Banach contraction principle applied to the operator $\mathcal{T}$. In this way, using Fubini's Theorem and since $\mathcal{G}$ is bounded, we obtain
\begin{equation}\begin{array}{c} \label{explained}
\dys  \|\mathcal{T}(v(\cdot,t))\|_{L^1(\Omega)} \leq \|u_0\|_{L^1(\Omega)}+\alpha_2\int_0^t\int_\Omega \int_{\Omega_J} J(x-y)|v(y;x,\tau)|\,dy\,dx\,d\tau \\\dys
 \leq \|u_0\|_{L^1(\Omega)}+\alpha_2 \int_0^t\left(\int_\Omega \int_{\Omega_J} J(x-y)|v(y,\tau)|\,dy\,dx\right.
+\left. \int_\Omega \int_{\Omega_J} J(x-y)|v(x,\tau)|\,dy\,dx  \right)d\tau
\\
\dys \leq \|u_0\|_{L^1(\Omega)}+\frac{\tilde{C}_1}{M}\left(e^{Mt}-1\right)|||v|||+\tilde{C}_2\,,
\end{array}\end{equation}
where $\tilde{C}_1=\alpha_2\|J\|_{L^\infty(\RR^N)} \left(|\Omega|+|\textrm{supp}(J)|\right)$ and $\tilde{C}_2=\alpha_2\|J\|_{L^\infty(\RR^N)} |\Omega|\, \|h\|_{L^1((\Omega_J\setminus \Omega)\times (0,\infty))}$\,. Therefore
\begin{align*}
|||\mathcal{T}(v)||| & \leq \max_{0\leq t \leq T}\left(e^{-Mt}(\|u_0\|_{L^1(\Omega)}+\tilde{C}_2) +\frac{\tilde{C}_1}{M}\left(1-e^{-Mt}\right)|||v|||  \right)
\leq \|u_0\|_{L^1(\Omega)}+\tilde{C}_2+\frac{\tilde{C}_1}{M}|||v|||\,.
\end{align*}
 Hence, $\mathcal{T}$ maps $X_{T}$ into itself. Note that all the involved constants do not depend on the value $T$.

  Now,
 by virtue of \eqref{Gi}, we can assert that for every $w,z \in X_T$
\begin{align*}
\big|(\mathcal{T}(w)-\mathcal{T}(z))(x,t)\big|&\le\int_0^t\int_{\Omega_J} J(x-y)\big|w(y;x,\tau)\mathcal{G}(x,w(y;x,\tau))- z(y;x,\tau)\mathcal{G}(x,z(y;x,\tau))  \big|\,dy\,d\tau
\\
&\leq \alpha_2 \int_0^t\int_{\Omega_J} J(x-y)\left|w(y;x,\tau)-z(y;x,\tau)\right|\,dy\,d\tau.
\end{align*}
Therefore, arguing as in \eqref{explained}, we get
\[
\|\mathcal{T}(w)-\mathcal{T}(z)\|_{L^1(\Omega)}\leq\frac{\tilde{C}_1}{M}\left(e^{Mt}-1\right)|||w-z|||\,.
\]
Thus, since $M>\tilde{C}$, we get
\[
|||\mathcal{T}(w)-\mathcal{T}(z)|||\le\vartheta |||w-z|||\,,
\]
with $0<\vartheta <1$.
Hence $\mathcal{T}$ is a contraction and by the Banach's Fixed Point Theorem there exists a unique $u\in X_{T}$ such that $\mathcal{T}(u)=u$, i.e., consequently we get local existence and uniqueness of problem $\eqref{P}$ for $0\leq t \leq T$. Moreover, since this argument is independent of the value $T$, we obtain a unique solution $u \in \mathcal{C}([0,\infty);L^1(\Omega))$ of problem $\eqref{P}$.

\medskip

ii) For the second part it is sufficient to change the definition of $||| \cdot |||$ in \eqref{norma}, replacing $L^1 (\Omega)$ with $\mathcal  C(\overline\Omega)$.
The regularity of $u_t$ easily follows by using the equation solved by $u$.
\end{proof}

\medskip

Next we deal with the proof of the comparison principle.

\medskip

\begin{proof}[Proof of Theorem \ref{C-P}]
We denote by $w=u-v$. Obviously $w\in \mathcal C(\overline\Omega\times [0,T])$, $w_t\in \mathcal  C(\overline\Omega\times (0,T))$ and it satisfies
$$
\left\{
\begin{array}{ll}
  w_t(x,t) \leq    \displaystyle \int_{\Omega_J}  J(x-y)(w(y,t)-w(x,t))  \psi (w(y;x,t)) \,dy, & \mbox{ in } \overline\Omega\times (0,T) ,\\[1.5 ex]
w(x,t)\leq0, & \mbox{ in } \Omega_J\setminus \overline\Omega \times (0,T),\\[1.5 ex]
w(x,0)\leq0, & \mbox{ in }\overline\Omega,
\end{array}\right.
$$
where $\psi$ is the function defined in \rife{h}.

Assume by contradiction that $w(x,t)$ is positive at some point $(\tilde{x},\tilde{t})$ that, without loss of generality, we can assume that belongs to $\Omega\times (0,T]$.
Thus, by the continuity of $u$ and $v$, there exists a  $\delta>0$ such that $w(\tilde{x},\tilde{t}) -\delta \tilde{t}>0$.
Let us denote by $(x_0,t_0)$ the maximum   point of $w(x,t) - \delta t $ which is, by construction, positive. Consequently
being $u_t$ continuous in $\Omega\times (0,T)$, we have that
$$
w_t (x_0,t_0) - \delta \geq 0  \,.
$$
On the other hand, plugging it into the equation in \eqref{p}, we get
$$
\begin{array}{c}
\displaystyle
w_t(x_0,t_0) \leq \int_{\Omega_J }J(x_0-y)\big(w(y,t_0)-w(x_0,t_0)\big) \psi \big(w(y,t_0)-w(x_0,t_0)\big) dy
\\\displaystyle
=
\int_{  \Omega }J(x_0-y)\big((w(y,t_0)- \delta t_0)  - (w(x_0,t_0)- \delta t_0) \big) \psi \big( (w(y,t_0)- \delta t_0) -(w(x_0,t_0)- \delta t_0)\big) dy
\\[1.5 ex]\displaystyle
+\int_{\Omega_J \setminus \Omega }J(x_0-y)\big((w(y,t_0)- \delta t_0)  - (w(x_0,t_0)- \delta t_0) \big) \psi \big((w(y,t_0)- \delta t_0)  - (w(x_0,t_0)- \delta t_0)\big) dy\,,
\end{array}
$$
and the last two integrals are nonpositive.
Indeed as far as the first one is concerned, we observe that $(x_0, t_0)$ is a maximum point, while $\psi $ is positive; moreover outside $\Omega$  we use that the boundary condition is negative and that  $w(x_0,t_0)- \delta t_0>0$, as well as the positivity of $\psi$. Hence we get a contradiction.
\end{proof}

Our goal is now to get a proof of Theorem \ref{principal}. Here, we start with a preliminary Lemma.

\begin{lemma}\label{approx}
Let $u \in \mathcal{C}^{2+\alpha,1+\alpha /2}\left(\RR^N\times [0,T]\right)$, $\mathcal{G}(x,s)$ a $\mathcal{C}^{1+\alpha}$ function with  respect to variable $s$  such that  $\mathcal{G}(x,0)\neq 0$ for a.e.  $x\in \Omega$, and let  $\mathcal{L}_\varepsilon$ be the following operator
\begin{equation}\label{L_e}
\mathcal{L}_\varepsilon(u(x,t))=\displaystyle \frac{ C(x)}{\varepsilon^2} \displaystyle \ir J_\varepsilon(x-y)\,u(y;x,t)\,\mathcal{G}(x,u(y;x,t))\,dy  ,
\end{equation}
where $ \frac{1}{C(x)}  =\frac{1}{2}C(J)\ \mathcal{G}(x,0)$. Then, $\exists \,c=c(T)>0$ such that, $\forall \varepsilon>0$
$$
\sup_{t\in [0,T]}  \Big\|\mathcal{L}_\varepsilon(u(x,t))- \Delta  u(x,t)-\mu(x) |\nabla   u(x,t)|^2  \Big\|_{{L^{\infty} (\Omega)}}\leq c \ \varepsilon^\alpha
$$
where $\mu(x) =\displaystyle \frac{2\mathcal{G}_s^\prime(x,0)}{\mathcal{G}(x,0)}$.
\end{lemma}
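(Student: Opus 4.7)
The plan is to do a standard scaling-plus-Taylor-expansion computation, carefully controlling the remainder via the $\mathcal{C}^{2+\alpha}$ regularity of $u$ and the $\mathcal{C}^{1+\alpha}$ regularity of $\mathcal{G}(x,\cdot)$.

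First, I would perform the change of variables $z=(y-x)/\varepsilon$ so that
\[
\mathcal{L}_\varepsilon(u(x,t)) = \frac{C(x)}{\varepsilon^2}\int_{\mathbb{R}^N} J(z)\,\bigl(u(x+\varepsilon z,t)-u(x,t)\bigr)\,\mathcal{G}\bigl(x,u(x+\varepsilon z,t)-u(x,t)\bigr)\,dz.
\]
Since $J$ has compact support (say in $B(0,R)$), $z$ ranges in a bounded set, which is what makes the remainder terms integrable. Because $u\in\mathcal{C}^{2+\alpha,1+\alpha/2}(\mathbb{R}^N\times[0,T])$, Taylor's theorem yields
\[
u(x+\varepsilon z,t)-u(x,t) = \varepsilon\,\nabla u(x,t)\cdot z + \tfrac{\varepsilon^2}{2}\,z^{T} D^2 u(x,t)\, z + r_1(x,z,t),
\]
with $|r_1|\le c\,\varepsilon^{2+\alpha}|z|^{2+\alpha}$ uniformly in $(x,t)\in\Omega\times[0,T]$. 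Because $\mathcal{G}(x,\cdot)\in\mathcal{C}^{1+\alpha}$ and $\mathcal{G}(x,0)\ne 0$,
\[
\mathcal{G}(x,s)=\mathcal{G}(x,0)+\mathcal{G}_s'(x,0)\,s + r_2(x,s),\qquad |r_2(x,s)|\le c\,|s|^{1+\alpha},
\]
and applied to $s=u(x+\varepsilon z,t)-u(x,t)=O(\varepsilon|z|)$ this gives an expansion with remainder of order $\varepsilon^{1+\alpha}$.

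Next I would multiply the two expansions and keep only the contributions up to order $\varepsilon^{2}$, bundling everything else into a remainder bounded by $c\,\varepsilon^{2+\alpha}$ times a polynomial in $|z|$. After dividing by $\varepsilon^{2}$ the surviving terms read
\[
C(x)\int_{\mathbb{R}^N}\! J(z)\!\left[\tfrac{\mathcal{G}(x,0)}{\varepsilon}\nabla u\!\cdot\! z \,+\, \tfrac{\mathcal{G}(x,0)}{2}\,z^{T}D^2u\,z \,+\, \mathcal{G}_s'(x,0)\,(\nabla u\!\cdot\! z)^2\right]dz \;+\; O(\varepsilon^\alpha).
\]
Using that $J$ is radially symmetric, $\int J(z)z_i\,dz=0$ (killing the $\varepsilon^{-1}$ term), and $\int J(z)z_iz_j\,dz=C(J)\delta_{ij}$, so that
\[
\int J(z)\,z^{T}D^2u\,z\,dz = C(J)\,\Delta u,\qquad \int J(z)(\nabla u\cdot z)^2\,dz = C(J)\,|\nabla u|^2.
\]
Plugging in the definitions $C(x)^{-1}=\tfrac{1}{2}C(J)\,\mathcal{G}(x,0)$ and $\mu(x)=2\mathcal{G}_s'(x,0)/\mathcal{G}(x,0)$ makes the leading term collapse exactly to $\Delta u(x,t)+\mu(x)|\nabla u(x,t)|^2$.

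The only point requiring some care is the uniformity of the remainder. Since $u$ and $\mathcal{G}$ are bounded in the relevant $\mathcal{C}^{2+\alpha}$- and $\mathcal{C}^{1+\alpha}$-seminorms on $\Omega\times[0,T]$ (respectively on bounded $s$-intervals, here the bound on $|u(x+\varepsilon z,t)-u(x,t)|$ is controlled by $\varepsilon R\|\nabla u\|_\infty$), and since the integration in $z$ is over the fixed compact set $\mathrm{supp}\,J$, the constants appearing in the bounds on $r_1$, $r_2$ and the cross-products can all be taken independent of $x\in\Omega$ and $t\in[0,T]$. This yields a uniform bound $\le c(T)\,\varepsilon^\alpha$, which is the claim. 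The main (mild) obstacle is really only bookkeeping: ensuring that every product of a Taylor remainder with the opposite factor is indeed at most $O(\varepsilon^{2+\alpha})$ before dividing by $\varepsilon^2$; no subtle analytic argument is needed beyond the radial symmetry of $J$ and the two Taylor expansions.
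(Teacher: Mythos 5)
Your proposal is correct and follows essentially the same route as the paper: a change of variables $y=x\pm\varepsilon z$, Taylor expansions of $u$ to second order (remainder $O(\varepsilon^{2+\alpha})$) and of $\mathcal{G}(x,\cdot)$ to first order (remainder $O(\varepsilon^{1+\alpha})$), cancellation of the $\varepsilon^{-1}$ term and identification of $\Delta u$ and $|\nabla u|^2$ via the radial symmetry of $J$ and the normalization $C(x)^{-1}=\frac12 C(J)\mathcal{G}(x,0)$. The paper organizes the product of expansions into three terms $S_1,S_2,S_3$ rather than bundling the remainders all at once, but the computation and the source of uniformity (compact support of $J$ and uniform H\"older seminorms on $\Omega\times[0,T]$) are the same.
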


\begin{remark}
Observe  that the integral expression above vanishes outside of $\Omega_{J_\varepsilon}=\Omega +\varepsilon \supp(J)$. In this way, $h$ is only needed to be prescribed in $\Omega_{J_\varepsilon} \setminus \overline\Omega$. Observe also that, thanks to the hypothesis of Theorem \ref{principal} we use, in the following, that
$$
h(x,t)= h_0 (x,t)+ O (\varepsilon) \qquad \mbox{ in } \ \Omega\setminus \Omega_{J_\varepsilon}\,.
$$
\end{remark}

\begin{proof}
In order to compute $\mathcal{L}_\varepsilon (u(x,t))$ we make the  change of  variables $y=x-\varepsilon z$, and we get
\begin{equation}\label{123}
\mathcal{L}_\varepsilon(u(x,t))=\displaystyle \frac{ C(x)}{\varepsilon^2} \,  \displaystyle \ir  J(z)u(x-\varepsilon z;x,t)\,\mathcal{G}(x,u(x-\varepsilon z;x,t))dz  .
\end{equation}
Moreover by Taylor formula we have that $\mathcal{G}(x,\delta)=\mathcal{G}(x,0)+\mathcal{G}_s^\prime(x,0)\delta+O(\delta^{1+\alpha})$, and
\[
u(x-\varepsilon z;x,t)=-\varepsilon\sum_i\frac{\partial u(x,t)}{\partial x_i}z_i+\frac{\varepsilon^2}{2}\sum_{i,j}\frac{\partial^2u(x,t)}{\partial x_i \partial x_j}z_iz_j +O\left(\varepsilon^{2+\alpha}\right),
\]
Consequently

\begin{equation}\label{tay}
\mathcal{L}_\varepsilon(u(x,t))=S_1(x,t)+S_2(x,t)+S_3(x,t)
\end{equation}
 being
\begin{align*}
S_1(x,t) &=\displaystyle \frac{C(x) \mathcal{G}(x,0)}{\varepsilon^2} \displaystyle \ir  J(z)u(x-\varepsilon z;x,t)\,dz,
\\[1.5 ex]S_2(x,t) &=\displaystyle \frac{ C(x) \mathcal{G}_s^\prime(x,0)}{\varepsilon^2} \displaystyle \ir  J(z)u(x-\varepsilon z;x,t)^2\,dz,
\\[1.5 ex]S_3(x,t) & = \displaystyle \frac{ C(x)}{\varepsilon^2} \displaystyle \ir  J(z)u(x-\varepsilon z;x,t)^{2+\alpha}\,dz = O (\varepsilon^\alpha).
\end{align*}
First, we deal with  $S_1(x,t)$ and we obtain
\begin{align}\label{S1}
\nonumber S_1(x,t) &=-\frac{C}{\varepsilon}\sum_i\frac{\partial u(x,t)}{\partial x_i}\ir J(z)z_idz+C(J)^{-1}\sum_{i,j}\frac{\partial^2u(x,t)}{\partial x_i \partial x_j}\ir J(z)z_iz_j +O\left(\varepsilon^{\alpha}\right)
\\
&=\sum_{i,j}\frac{\partial^2u(x,t)}{\partial x_i \partial x_j}+O\left(\varepsilon^{\alpha}\right),
\end{align}
using in the last equality that $J$ is radially symmetric, that is, $\ir J(z)z_idz=0$ and
$$\ir \negthickspace \negthickspace J(z)z_iz_j\, dz=0 \qquad \mbox{  if  i $\neq$ j }.$$
In order to compute $S_2(x,t)$, using the expansion of
$u(x-\varepsilon z;x,t) $
up to the  first order, we get
\begin{equation}\label{S2}
\begin{array}{c}
\displaystyle S_2(x,t) =\displaystyle \frac{ C(x) \mathcal{G}_s^\prime(x,0)}{\varepsilon^2} \displaystyle \ir  J(z)\left( -\varepsilon\sum_i\frac{\partial u(x,t)}{\partial x_i}z_i+O\left(\varepsilon^{1+\alpha}\right)\right)^2dz
  \\
\displaystyle = C(x) \mathcal{G}_s^\prime(x,0) \sum_{i,j} \frac{\partial u(x,t)}{\partial x_i}\frac{\partial u(x,t)}{\partial x_j}\ir J(z)z_iz_jdz +O\left(\varepsilon^\alpha  \right)
  =\displaystyle \frac{2\mathcal{G}_s^\prime(x,0)}{\mathcal{G}(x,0)}\sum_i\left(\frac{\partial u(x,t)}{\partial x_i}  \right)^2+O\left(\varepsilon^\alpha  \right),
\end{array}
\end{equation}
using again, in the last equality, that $J$ is radially symmetric. Finally, setting $u(x-\varepsilon z;x,t)=O(\varepsilon)$, we obtain that $S_3(x,t)=O\left(\varepsilon^\alpha  \right)$ and gathering together \rife{tay} with \eqref{S1} and \eqref{S2}, we deduce that  \eqref{123}  becomes
$$
\mathcal{L}_\varepsilon(u(x,t))=\Delta u(x,t)+\mu(x) |\nabla   u(x,t)|^2 +O\left(\varepsilon^{\alpha}\right)
$$
concluding the proof.
\end{proof}

\begin{remark}\label{4.3}
Arguing as the in the proof of the above Lemma, we can state the following assertion: the operator defined as
\begin{equation*}
\tilde{\mathcal{L}}_\varepsilon(u(x,t))=\displaystyle \frac{ C(x)}{\varepsilon^2}  \displaystyle \ir J_\varepsilon(x-y)\,u(y;x,t)\,\mathcal{G}({ y},u(y;x,t))\,dy  ,
\end{equation*}
converges uniformly in $[0,T]\times \overline \Omega$, as $\varepsilon \to 0$, to the operator
\begin{equation*}
\Delta u(x,t)+\nabla_{ y}\eta (x,0)\nabla  u(x,t)+\eta_s^\prime(x,0)|\nabla  u(x,t)|^2,
\end{equation*}
being $\eta(x,s)=\log G(x,s)^2$. Therefore, the role of the variables is not symmetric.
\\
\end{remark}
\begin{remark}
Let us recall that given $\mu:\RR^N\to \RR$, then $\mathcal{G}_\mu(x,s)$ defined in \eqref{G_mu} satisfies $ \frac{2\mathcal{G}_s^\prime(x,0)}{\mathcal{G}(x,0)}=\mu(x)$, for any  $x\in \RR^N$.
\end{remark}

  Now, we prove the main result of this section. That is, classical solutions of \eqref{quasilinear} can be approximated by solutions of problem \eqref{P_e} which in a general setting reads as follows,

\begin{proof}[Proof of Theorem \ref{principal}]
Let $\tilde{v}$ be a $\mathcal{C}^{2+\alpha, 1+\alpha/2}\left(\RR^N\times [0,T]  \right)$ extension of $v$, the solution to \eqref{quasilinear}.
Denote by $h(x,t)=\tilde{v}(x,t)$ for any  $(x,t)\in (\RR^N\backslash \Omega)\times(0,T]$. Then $h$ is smooth and $h(x,t)=h_0(x,t)$ if $x \in \partial \Omega$ and we get
\begin{equation}\label{G}
h(x,t)=h_0 (x,t)+O(\varepsilon), \hspace{0,3cm} \textrm{for} \,\, x\in\Omega_{J_\varepsilon} \setminus \Omega.
\end{equation}

Observe that $\tilde{v}$ verifies
\begin{equation*}
\left\{
\begin{array}{ll}
\tilde{v}_t(x,t)=\Delta  \tilde{v}(x,t)+\mu(x) |\nabla   \tilde{v}(x,t)|^2
&  \mbox{ in }   \Omega,
\\[1.5 ex]
\tilde{v}(x,t)=h(x,t) & \mbox{ in }  (\Omega_{J_\varepsilon} \setminus \Omega) \times  (0,T),
\\[1.5 ex]\tilde{v}(x,0)=v_0(x)
& \mbox{ in }  \Omega.
\end{array}\right.
\end{equation*}

 \medskip

 Theorem \ref{existence} asserts that, for any given $\varepsilon>0$, there exists a unique $u^\varepsilon$ which is solution to \eqref{P_ee}.

  Set $w^\varepsilon:=\tilde{v}-u^\varepsilon$, which satisfies
\begin{equation}\label{wi}
\left\{
\begin{array}{ll}
w^\varepsilon_t(x,t)=\Delta  \tilde{v}(x,t)+\mu(x) |\nabla   \tilde{v}(x,t)|^2-\mathcal{L}_\varepsilon(u^\varepsilon(x,t))
&   \mbox{ in }   \Omega\times (0,T),
\\[1.5 ex]
w^\varepsilon(x,t)=0 & \mbox{ in }  (\Omega_{J_\varepsilon} \setminus \Omega) \times  (0,T),
\\[1.5 ex]
w^\varepsilon(x,0)=0
&  \mbox{ in }   \Omega.
\end{array}\right.
\end{equation}
By using condition \eqref{h}, we set
\begin{align*}
\mathcal{M}_\varepsilon(w^\varepsilon(x,t)) & :=\mathcal{L}_\varepsilon(\tilde{v}(x,t))-\mathcal{L}_\varepsilon(u^\varepsilon(x,t))
\\
& =\frac{C(x)}{\varepsilon^2}\int_{\Omega_{J_\varepsilon}} J_\varepsilon(x-y)\,\psi\left(x,\tilde v(y;x,t),u^\varepsilon(y;x,t)\right)\,w^\varepsilon(y;x,t)dy.
\\
\Lambda_\varepsilon(\tilde v(x,t))&:= \Delta  \tilde{v}(x,t) +  \mu(x) |\nabla   \tilde{v}(x,t)|^2    - \mathcal{L}_\varepsilon(\tilde{v}(x,t)).
\end{align*}
In this way, we replace equation \eqref{wi} by the following
\begin{equation}\label{sub-sup}
\left\{
\begin{array}{ll}
  w^\varepsilon_t(x,t)  = \Lambda_\varepsilon(\tilde v(x,t))+\mathcal{M}_\varepsilon(w^\varepsilon(x,t)),\,  & \mbox{ in }   \Omega\times (0,T),\\[1.5 ex]
  w^\varepsilon(x,t)= 0 \,, & \mbox{ in }  (\Omega_{J_\varepsilon} \setminus \Omega) \times  (0,T),
\\[1.5 ex]
w^\varepsilon(x,0)=0,
& \mbox{ in }   \Omega.
\end{array}\right.
\end{equation}
We begin by proving that for $K_1,K_2>0$ sufficiently large, $\overline{w}(x,t)=K_1\varepsilon^\alpha t+K_2\varepsilon$ is a supersolution of \eqref{sub-sup}. Indeed, taking into account Lemma \ref{approx} and  that $\mathcal{M}_\varepsilon(\overline{w}(x,t))=0$, we obtain
$$
\overline{w}_t(x,t)=K_1\varepsilon^\alpha \geq \Lambda_\varepsilon(\tilde v(x,t))=\Lambda_\varepsilon(\tilde v(x,t))+\mathcal{M}_\varepsilon(\overline{w}(x,t)),
$$
for $x\in \Omega$, $t\in (0,T]$. Moreover, $\overline{w}(x,0)>0$ and by \eqref{G},  we have that $\overline{w}(x,t)\geq K_2\varepsilon\geq O(\varepsilon)$, for $x\in\Omega_{J_\varepsilon} \setminus \Omega$ and $t\in (0,T]$. Consequently, $\overline{w}$  is a supersolution of \eqref{sub-sup}.
\\
 Now, by the comparison principle stated in Theorem \ref{C-P}, we get
\begin{equation}\label{first-inequality}
\tilde{v}-u^\varepsilon \leq K_1\varepsilon^\alpha t+K_2\varepsilon.
\end{equation}
By the other hand, similar arguments applied to the case $\underline{w}=-\overline{w}$ leads us to assert that $\underline{w}$
is a subsolution of \eqref{sub-sup} and using again the comparison principle we obtain
\begin{equation}\label{second-inequality}
\tilde{v}-u^\varepsilon \geq -K_1\varepsilon^\alpha t-K_2\varepsilon.
\end{equation}
Hence, by virtue of \eqref{first-inequality} and \eqref{second-inequality}
$$
\sup_{t\in [0,T]} \|u^\varepsilon(\cdot,t)-v(\cdot,t)\|_{L^{\infty} (\Omega)} \leq K_1\varepsilon^\alpha T+K_2\varepsilon ,
$$
that vanishes as $\varepsilon$ goes to  $0$.
\end{proof}


Here, we deal with the asymptotic behavior of the solution. In order to prove the main result (i.e. Theorem \ref{comp2}), we start with an intermediate  result.



\begin{theorem}\label{comp1}
Given $\lambda\ne0$, consider the problem
\begin{equation}\label{prob-comp}
\left\{
\begin{array}{ll}
  u_t(x,t) = \displaystyle \int_{\Omega_J}J(x-y)\mathcal G(x,u(y;x,t))u(y;x,t)\,dy, & x\in \overline\Omega, \,\,t>0,\\[1.5 ex]
u(x,t)=0, & x\in \Omega_J\setminus \overline\Omega, \,\,t>0.\\[1.5 ex]
u(x,0)=\lambda, & x\in \Omega.
\end{array}\right.
\end{equation}
Then the unique solution to problem \eqref{prob-comp} satisfies
\begin{equation}\label{conv-1}
  \lim_{t\to\infty}u(\cdot, t)=0\,,\quad\hbox{uniformly in }\overline\Omega\,.
\end{equation}
\end{theorem}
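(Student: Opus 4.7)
The strategy is to first trap $u$ between $0$ and $\lambda$ using the comparison principle, then show that the supremum $M(t):=\max_{\overline\Omega}u(\cdot,t)$ is monotone non-increasing, and finally rule out a positive limiting value by a propagation-of-the-maximum argument that exploits the zero boundary condition.

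Without loss of generality $\lambda>0$ (otherwise work with $-u$, which solves the analogous problem since \eqref{Gi} is preserved under $s\mapsto -s$ up to renaming the constants). The constant $\underline u\equiv 0$ and the constant $\overline u\equiv\lambda$ are respectively a regular sub- and super-solution of \eqref{prob-comp}, because for any constant the integrand in the equation vanishes identically and the initial/boundary inequalities are trivial. Theorem \ref{C-P} therefore gives $0\leq u(x,t)\leq\lambda$ on $\overline\Omega\times[0,\infty)$. Next, I would show $M(t)$ is non-increasing: at any $x^\ast\in\overline\Omega$ attaining $M(t)$ the inequality $u(y,t)\leq M(t)=u(x^\ast,t)$ holds for $y\in\overline\Omega$, and $u(y,t)=0\leq M(t)$ for $y\in\Omega_J\setminus\overline\Omega$, so together with $\mathcal G\geq\alpha_1>0$ from \eqref{Gi} the integrand in $u_t(x^\ast,t)$ is pointwise non-positive; a standard envelope argument using the continuity of $u_t$ granted by Theorem \ref{existence} yields $M'(t)\leq 0$ a.e. Hence $M(t)\searrow M^\ast\geq 0$.

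Suppose, for contradiction, $M^\ast>0$. Consider the time-shifts $u^n(x,t):=u(x,t+n)$; these are uniformly bounded by $\lambda$ and the equation gives $\|u^n_t\|_\infty\leq 2\alpha_2\lambda$, so the family is equicontinuous in $t$. Combined with the continuity of $u(\cdot,t)\in\mathcal C(\overline\Omega)$ and an estimate on the spatial modulus of continuity (propagated through the integral operator, using continuity of $J$), a diagonal/compactness procedure produces a subsequential limit $u^\infty:\overline\Omega\times\mathbb R\to[0,\lambda]$ that satisfies \eqref{prob-comp} on $\overline\Omega\times\mathbb R$, is identically $0$ on the complement of $\overline\Omega$, and whose spatial supremum is the constant $M^\ast$ in $t$.

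Because $\max_{\overline\Omega}u^\infty(\cdot,t)\equiv M^\ast$, at every $(x^\ast,t_0)$ with $u^\infty(x^\ast,t_0)=M^\ast$ one has $u^\infty_t(x^\ast,t_0)=0$; but as in the proof of Step 2 the integrand for $u^\infty_t(x^\ast,t_0)$ is pointwise non-positive, so it must vanish identically in $y$, and using $J\mathcal G>0$ on $\operatorname{supp}J$ this forces $u^\infty(y,t_0)=M^\ast$ for every $y\in(x^\ast+\operatorname{supp}J)\cap\Omega_J$. Iterating the argument at each such $y$, the plateau $\{u^\infty(\cdot,t_0)=M^\ast\}$ spreads from $x^\ast$ by finite covers of $\operatorname{supp}J$, eventually invading a neighbourhood of $\partial\Omega$; evaluating the equation at a point $\tilde x$ with $\operatorname{dist}(\tilde x,\partial\Omega)$ less than the radius of $\operatorname{supp}J$ then produces a strictly negative contribution from the integral over $(\tilde x+\operatorname{supp}J)\cap(\Omega_J\setminus\overline\Omega)$, since there $u^\infty=0<M^\ast$, contradicting $u^\infty_t(\tilde x,t_0)=0$. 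Therefore $M^\ast=0$ and the convergence is uniform.

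\textbf{Main obstacle.} The delicate step is the compactness needed to extract the limit $u^\infty$: nonlocal convolution-type operators do not regularise in space, so equicontinuity in $x$ is not automatic and must be traced back to properties of the data and of $J$ (in the case at hand this is helped by $u_0$ being constant and $J$ being continuous and compactly supported). The propagation step itself is essentially algebraic and only requires that $\operatorname{supp}J$ generate $\RR^N$ by finite iterated Minkowski sums, which follows from $J$ being radial, non-negative and of unit mass.
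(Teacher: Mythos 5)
There is a genuine gap at the compactness step, and it is precisely the step you flag as delicate but do not fill. Your argument needs the time-shifts $u^n(x,t)=u(x,t+n)$ to subconverge to a continuous limit $u^\infty$ solving the equation, and for that you need a spatial modulus of continuity that is uniform in $t$. The operator does not regularise, so the only way to get such a modulus is to propagate it through the integral equation; but writing $\omega_\delta(t)=\sup_{|x-x'|\le\delta}|u(x,t)-u(x',t)|$ and splitting the difference of the two integrals into a $\|J(x-\cdot)-J(x'-\cdot)\|_{L^1}$ part and a part controlled by $\alpha_2\,\omega_\delta(\tau)$ via \eqref{Gi}, Gronwall only yields $\omega_\delta(t)\le C(\delta)\,e^{\alpha_2 t}$, which is useless as $t\to\infty$. (There is a second, independent obstruction: $\mathcal G$ is only assumed measurable in $x$, so the term coming from the $x$-dependence of $\mathcal G$ cannot be absorbed into a modulus of continuity at all.) Without the limit object $u^\infty$, the plateau-propagation argument in your last paragraph has nothing to act on, so as written the proof does not close.

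The missing idea — which is how the paper proceeds — is to upgrade your monotonicity of $M(t)=\max_{\overline\Omega}u(\cdot,t)$ to \emph{pointwise} monotonicity in $t$: for fixed $\tau>0$ the shift $u^\tau(x,t)=u(x,t+\tau)$ solves \eqref{prob-comp} with initial datum $u(\cdot,\tau)\le\lambda=u_0$, so Theorem \ref{C-P} gives $u(x,t+\tau)\le u(x,t)$ for every $x$. This produces a pointwise limit $w(x)=\lim_{t\to\infty}u(x,t)$ with no compactness whatsoever; passing to the limit in $u_t$ (by dominated convergence) shows $\int_{\Omega_J}J(x-y)\mathcal G(x,w(y;x))w(y;x)\,dy=0$, the semicontinuity of a monotone limit of continuous functions is enough to let $w$ attain its maximum, and then exactly your propagation-to-the-boundary argument forces $w\equiv 0$; Dini's theorem finally converts pointwise monotone convergence into the claimed uniform convergence. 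Your steps 1 and 4 match the paper; replacing the max-monotonicity plus compactness machinery of steps 2--3 by the time-shift comparison is what makes the proof work.
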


\begin{proof}
We assume that $\lambda>0$, the other case may similarly be proved.

Let $u\in \mathcal C(\overline\Omega\times [0,\infty))$ be the unique solution to problem \eqref{prob-comp} with $\lambda>0$.
Since $v^1(x,t)=\lambda$ and $v^2(x,t)=0$ define a supersolution and a subsolution, respectively, it follows from the Comparison Principle that
\begin{equation}\label{bas-des}
0\le u(x,t)\le \lambda\,,\quad\hbox{for every } \mbox{ in } \overline\Omega\times (0, +\infty) \,.
\end{equation}

Moreover, fixed $\tau>0$, the function $u^\tau(x,t)=u(x,t+\tau)$ defines a solution with initial datum $u^\tau_0(x)=u(x,\tau)$. Thus, the basic inequality \eqref{bas-des} implies $0\le u^\tau_0(x)\le \lambda$. Appealing again to the Comparison Principle, it yields
$$
0\le u(x,t+\tau)\le u(x,t)\,,\quad\hbox{for every } \mbox{ in } \Omega\hbox{ and for any   }\tau>0\,.
$$
Hence, we obtain that our solution is nonincreasing with respect to  $t$. As a consequence, there exists
$$
w(x)=\lim_{t\to\infty}u(x,t)\,,\quad\hbox{for any  }x\in\overline\Omega\,.
$$
We have to prove that $w(x)=0$ for any  $x\in\overline\Omega$. Observe that this limit function satisfies
$$
w(x)=\int_0^\infty\int_{\Omega_J}J(x-y)\mathcal G(x,u(y;x,t))u(y;x,t)\, dy\, dt+\lambda \,,\quad x\in\overline\Omega
$$
and $w_{\big|_{\Omega_J\backslash \overline\Omega}}\equiv 0$.

Fixed any $x\in\Omega$, consider a sequence $\{t_n\}_{n \in \mathbb{N}}$ satisfying $t_n\to\infty$. We deduce that
$$
\lim_{n\to\infty}u_t(x,t_n)=\int_{\Omega_J}J(x-y)\mathcal G(x,w(y;x))w(y;x)\, dy\,,
$$
and so this limit does not depend on the chosen sequence. Thus, there exists $\lim_{t\to\infty}u_t(x,t)=\ell$ and this limit is nonpositive since our solution is nonincreasing in $t$. (We remark that the limit $\ell$ depends on the considered point $x$.) Assume by contradiction that $\ell<0$. Then there exists $t_0>0$ such that
$$
u_t(x,t)<\frac\ell2\,,\quad\hbox{for any  }t\ge t_0\,.
$$
It follows that $u(x,t)-u(x,t_0)<\frac\ell2(t-t_0)$, which implies
$u(x,t)<\lambda+\frac\ell2(t-t_0)$ and this quantity is negative for $t$ large enough. Since this contradicts \eqref{bas-des}, we have $\ell=0$. Obviously, this argument holds for every $x\in\Omega$, wherewith
$$
\lim_{t\to\infty}u_t(x,t)=\int_{\Omega_J}J(x-y)\mathcal G(x,w(y;x))w(y;x)\, dy=0\,,\quad x \hbox{ in } \Omega\,.
$$
By continuity, we conclude that
\begin{equation}\label{equa1}
  \int_{\Omega_J}J(x-y)\mathcal G(x,w(y;x))w(y;x)\, dy=0\,,\quad x \hbox{ in } \overline\Omega\,.
\end{equation}

Recalling that the function $w$ is the limit of a nonincreasing family of continuous functions, we deduce that $w$ is lower semicontinuous in $\overline\Omega$. So $w$ attains its maximum   in $\overline\Omega$; let $x_0\in \overline\Omega$ satisfy $w(x)\le w(x_0)$ for any  $x\in\overline\Omega$.

Since the function $J$ is radial symmetric, it is positive in an open ball centered at the origin; we denote its radius is $r$
Let $n$ be the integer part of $\dis(x_0,\partial\Omega)/r$.
Applying \eqref{equa1} it yields
$$
\int_{\Omega_J}J(x-y)\mathcal G(x,w(y)-w(x_0))(w(y)-w(x_0))\, dy=0\,.
$$
Since the integrand is nonpositive, it vanishes, so that $w(y)=w(x_0)$ for any  $y\in\overline\Omega$ satisfying $y-x_0\in\supp J$, that is, for any  $y\in\overline\Omega\cap B_1(x_0)$. If $n\ge 1$ and so $B_r(x_0)\subset\Omega$, taking $y_0$ close to the boundary of $B_r(x_0)$ and applying the same argument, we infer that $w(y)=w(x_0)$ for any  $y\in\overline\Omega \cap B_{2r}(x_0)$. We may follow this procedure $n$ times to find some $x\in\Omega$ such that $w(x)=w(x_0)$ and $\dis(x,\partial\Omega)<r$ (this fact can already be attained in the first step if $n=0$). Then
$$
0=\int_{\overline\Omega}J(x-y)\mathcal G(x,w(y)-w(x))(w(y)-w(x))\, dy+\int_{\Omega_J\backslash\overline\Omega}J(x-y)\mathcal G(x,-w(x))(-w(x))\, dy\,.
$$
Notice that both integrands are nonpositive, so that both vanish. We deduce from the first integral that $w$ is constant in $\overline\Omega\cap B_r(x)$ and from the second one that this constant is equal to $0$. Therefore, $w(x_0)=w(x)=0$ and as a consequence $w(x)=0$ for any  $x\in\overline\Omega$.

Recalling that the function $u(x, t)$ is nonincreasing in $t$ and $\displaystyle \lim_{t\to\infty}u(x,t)=0$ for any  $x\in \overline\Omega$, we deduce from Dini's Theorem that this convergence is uniform.
\end{proof}

\medskip

With the help of Theorem \ref{comp1}, we are ready to prove Theorem \ref{comp2}.

\medskip

\begin{proof}[Proof of Theorem \ref{comp2}]
Consider $u^1$ the solution to \eqref{prob-hom} with initial datum $u^1_0(x)=\|u_0\|_{L^{\infty} (\Omega)}$, and $u^2 \equiv 0$. On the one hand, it follows from the Comparison Principle that
$$
0\le u(x,t)\le u^1(x,t)\,,\quad\hbox{for any  }x\in\overline\Omega\hbox{ and }t>0\,.
$$
On the other hand, we deduce from Proposition \ref{comp1} that
$$
\lim_{t\to\infty}u^1(x,t)=0\,,\quad\hbox{uniformly in }\overline\Omega
$$
and thus the result follows.
\end{proof}
\begin{remark}\label{4.5}
As already mentioned, if hypothesis \rife{mu<0} holds true, we have that the decay at $0$ is of exponential type. Indeed,
\begin{align*}
\frac{d}{dt}\int_\Omega u^2(x,t)dx &=2 \ir \ir J(x-y)\mathcal{G}(x,u(y;x,t))\,u(y;x,t)\,u(x,t)\,dy\,dx
\\
& =-\beta \ir \ir J(x-y)(u(y,t)-u(x,t))^2\,dy\,dx.
\end{align*}
Now, due to \cite{Chasse}, there exists a pair $( \lambda_1, \phi  (x)) \in \RR^+ \times  \mathcal{C} (\Omega)$ such that
\[
0<\lambda_1=\inf_{u\in L^2(\Omega)\setminus \{0\}} \frac{\displaystyle\frac{1}{2}\ir\ir J(x-y)(u(y)-u(x))^2dy\,dx}{\displaystyle\int_\Omega u(x)^2dx}
\]
and a function $\phi(x)$ where the infimum is attained. Consequently, we conclude that
\[
\frac{d}{dt}\int_\Omega u^2(x,t)dx\leq -2\beta \lambda_1\int_\Omega u(x,t)^2dx,
\]
and integrating over $[0,t]$, we have  that
$\|u(\cdot,t)\|_{L^2(\Omega)}\leq \|u_0\|_{L^2(\Omega)}\,e^{-\lambda_1  \beta  \ t}\,
$.
\end{remark}

 \section{Proofs of results  about Cauchy Problem}
As in the previous Section, we start by proving the existence and uniqueness result.

\begin{proof}[Proof of Theorem \ref{existence_Cauchy}] For $T>0$ we consider the Banach space
$$
X=\mathcal{C}\left([0,T]; \mathcal{C}(\RR^N)\cap L^\infty(\RR^N)  \right),
$$
endowed with the norm
$$
|||w|||=\max_{0\leq t \leq T} e^{-kMt} \|w(\cdot,t)\|_{{L^{\infty} (\RR^N)}}.
$$
Here $M=2\,\alpha_2$ and $k\geq 1$.

  Now, let $Y$ be the closed ball of $X$ with radius $k\|u_0\|_{L^{\infty} (\RR^N)}$ and centered at the origin. Note that $Y$ is a complete metric space with the induced metric $d(w_1,w_2)=|||w_1-w_2|||$.

   In order to establish the existence and uniqueness of solutions of \eqref{C} via Banach contraction principle, we define the operator $\mathcal{T}:Y\longrightarrow Y$ by
$$
\mathcal{T}(w)(x,t)=\int_0^t\ir\negthickspace\negthickspace\negthickspace J(x-y)w(y;x,\tau)\mathcal{G}(x,w(y;x,\tau))dyd\tau+u_0(x).
$$

Let us first prove that this operator is well defined. Clearly $\mathcal{T}(w)$ is belongs to $X$ and satisfies
\begin{equation}  \label{belongs}
\begin{array}{c}\displaystyle
\|\mathcal{T}(w)(\cdot,t)\|_{{L^{\infty} (\RR^N)}}   \leq \alpha_2 \max_{x\in \RR^N} \int_0^t\ir J(x-y)|w(y;x,s)|dyds +\|u_0\|_{L^{\infty} (\RR^N)}
\\ \displaystyle
 \leq 2 \alpha_2 \int_0^t \|w(\cdot,s)\|_{L^{\infty} (\RR^N)} ds+\|u_0\|_{L^{\infty} (\RR^N)}
  \leq 2 \alpha_2 |||w||| \int_0^t e^{kMs}ds+\|u_0\|_{L^{\infty} (\RR^N)}
\leq  e^{kMt}\|u_0\|_{L^{\infty} (\RR^N)}.
\end{array}
\end{equation}
Therefore,
\begin{align*}
|||\mathcal{T}(w)||| & =\max_{0\leq t \leq T}e^{-kMt}\|\mathcal{T}(w)(\cdot,t)\|_{L^{\infty} (\RR^N)}
  \leq \|u_0\|_{L^{\infty} (\RR^N)}.
\end{align*}
Since $k>1$, we obtain that $|||\mathcal{T}(w)||| \leq k \|u_0\|_{L^\infty(\RR^N)}$ and $\mathcal{T}(w)$ belongs to $Y$.

  Now, let us show that the operator $\mathcal{T}$ is a contraction. By using that $\mathcal{G}$ satisfies $\eqref{Gi}$ and arguing as \eqref{belongs}, we obtain
$$\begin{array}{c}
 \displaystyle
\|\left(\mathcal{T}(w_1)-\mathcal{T}(w_2)\right)(\cdot,t)\|_{{L^{\infty} (\RR^N)}}  \leq \alpha_2 \max_{x\in \RR^N} \int_0^t\ir J(x-y)|w_1(y;x,\tau)-w_2(y;x,\tau)|dyd\tau
\\ \displaystyle
 \leq 2\,\alpha_2 \int_0^t \|w_1(\cdot,\tau)-w_2(\cdot,\tau)\|_{{L^{\infty} (\RR^N)}} d\tau
  \leq 2\,\alpha_2 |||w_1-w_2||| \int_0^te^{kM\tau}d\tau
 \leq \frac{1}{k}\left(e^{kMt}-1\right) |||w_1-w_2|||.
\end{array}
$$
Therefore,
\begin{align*}
d(\mathcal{T}(w_1),\mathcal{T}(w_2)) & \leq \frac{1}{k} |||w_1-w_2||| \displaystyle\max_{0\leq t\leq T} \left(1-e^{-kMt}\right)
 \leq \frac{1}{k}d(w_1,w_2).
\end{align*}
Since $k>1$, $\mathcal{T}$ is a contraction. Hence, using Banach's Fixed Point Theorem there exists $u$ a fix point of  $\mathcal{T}$, that is the unique solution of problem \eqref{C} for $t\in [0,T]$ and belongs to $Y$.
Finally, since $T$ is arbitrary, we  obtain a global solution, $u \in \mathcal{C}\left([0,\infty);\mathcal{C}(\RR^N)\cap L^\infty(\RR^N)\right)$.
\end{proof}

\medskip
Now we can prove the Comparison Principle.
\medskip

\begin{proof}[Proof of Theorem \ref{C-P_Cauchy}]
Set $w=u-v$, then in virtue of \eqref{h} $w$ satisfies
\begin{equation}\label{w}
\left\{
\begin{array}{ll}
 w_t(x,t) = \displaystyle\int_{\RR^N} J(x-y) \,w(y;x,t) \ \psi (x,u(y;x,t),v(y;x,t))dy
&  \mbox{ in } \RR^N \times (0, +\infty) \\[10pt]
w(x,0)\leq 0,
& \mbox{ in } \RR^N,
\end{array}\right.
\end{equation}
where $\psi$ is the function defined in \rife{h}.
Let us consider the following function
$$
\varsigma (x,t)=\left \{\begin{array}{lr}
1 & \textrm{if}\,\, w(x,t)\geq 0,
\\[1.5 ex]
0 & \textrm{if}\,\, w(x,t)< 0.
\end{array}    \right.
$$
Multiplying \eqref{w} by $\varsigma (x,t)$ and taking into account that $w_t(x,t) \varsigma (x,t)=\left(w_+\right)_t (x,t)$ and $w(y,t) \varsigma (x,t)\leq w_+(y,t)$, we obtain,  dropping the positive term $w(x,t) \varsigma (x,t)$, that
\begin{align*}
\left(w_+\right)_t(x,t)&= \ir J(x-y)\,(w(y,t) \varsigma (x,t)-w(x,t) \varsigma (x,t))\,\psi(x,u(y;x,t),v(y;x,t))dy
\\
& \leq \ir J(x-y)\,  w_+  (y,t)\,\psi(x,u(y;x,t),v(y;x,t))dy
 \leq \alpha_2 \ir J(x-y)\, w_+(y,t)dy,
\end{align*}
integrating in $\RR^N$ and by using $\ir J(z)dz=1$, we get
$$
\ir \left(w_+\right)_t(x,t)dx\leq  \alpha_2 \ir w_+ (y,t)dy.
$$
Finally, integrating in $(0,T]$ and since $w_+(x,0)=0$ we can assert, using Fubini's theorem, that
\begin{equation}\label{gronwall}
k(t)\leq \alpha_2 \int_0^t k(\tau)d\tau,
\qquad \mbox{where} \qquad  k(t)=\ir  w_+(x,t)dx.
\end{equation}

Hence, applying Gronwall's Lemma in \eqref{gronwall}, we conclude that $$k(t)\leq 0.$$
Now, since $w_+(x,t)\geq 0$ and by the continuity of $w_+$, we get that $w_+(x,t)=0$ and, consequently, $$u(x,t)\leq v(x,t)$$ for any  $x \in \RR^N,\,\, t>0$.
\end{proof}

Note that the previous proof works locally in time, that is, a supersolution $v$ and a subsolution $u$ defined both for $t\in [0,T]$ verify $u(x,t)\leq v(x,t)$ for any  $x \in \RR^N,\,\, 0\leq t <  T$.

\begin{proof}[Proof of Theorem \ref{principal_Cauchy}]
By Theorem \ref{existence_Cauchy}, for any  $\varepsilon > 0$ there exists  $u^\varepsilon$ the unique solution of problem \eqref{P_e_Cauchy}. Set $w^\varepsilon:=v-u^\varepsilon$, wich satisfies
\begin{equation}\label{w_Cauchy}
\begin{cases}
 w^\varepsilon_t(x,t)=\Delta  v(x,t)+\mu(x) |\nabla   v(x,t)|^2-\mathcal{L}_\varepsilon(u^\varepsilon(x,t)),
& \mbox{ in }  \RR^N\times (0,T],\\[1.5 ex]
 w^\varepsilon(x,0)=0,
& \mbox{ in }    \RR^N,
\end{cases}
\end{equation}
being
$$\mathcal{L}_\varepsilon(u^\varepsilon(x,t))=\displaystyle \frac{C(x)}{\varepsilon^2} \displaystyle \ir \negthickspace \negthickspace J_\varepsilon(x-y)u^\varepsilon(y;x,t)\,\mathcal{G}(x,u^\varepsilon(y;x,t))dy .$$

Now, the proof follows the one of  Theorem \ref{existence}. \\
Choosing $\overline{w}(x,t)=K\varepsilon^\alpha t$ and $\underline{w}(x,t)=-\overline{w}(x,t)$. Then for $K$ sufficiently large we have that $\overline{w}$ and $\underline{w}$ are super and subsolution of \eqref{w_Cauchy} respectively. Therefore, by the principle comparison of Theorem \ref{C-P_Cauchy} we obtain $\underline{w}\leq w^\varepsilon \leq \overline{w}$ and the proof is straightforward.
\end{proof}






As far as the asymptotic behavior is concerned,  we observe that $\hat{J}(\xi)$, the Fourier transform of $J$, satisfies
\begin{equation*}
  \hat{J}(\xi)\leq1- C (J) |\xi|^2+o(|\xi|^2) ,\quad \mbox{as} \,\,  \xi \to 0\,.
\end{equation*}
where the above estimates follows since
\[
\frac{1}{2}\partial^2_{\xi_i\xi_i}\hat{J}(0)=\frac{1}{2}\ir J(z)z_N^2dz=\frac12 C(J)<\infty,
\]
thanks to \rife{(H)}.

\medskip

  For the convenience of the reader we repeat the following Lemma that is proved in  \cite{CM} including also a sketch of the  proof (in order to make this part of the paper self-contained).

\medskip

\begin{lemma}\label{lemma}Let $u \in L^1(\RR^N)\cap L^2(\RR^N)$ and $J$ satisfying hypothesys \eqref{(H)}. In addition, consider
\[
D_J(u)=\ir \left(1-\hat J(\xi) \right)|\hat u(\xi)|^2d\xi.
\]
Then, $\exists \, \tilde{C}=\tilde{C}(N,J)>0$ such that
\[
\| u\|_{L^{2} (\RR^N)}^2 \leq \tilde{C} \max \left\{\|u\|_{L^{12} (\RR^N)}^{\frac{4}{N+2}}\,D_J(u)^{\frac{N}{N+2}},D_J(u)    \right\}\,,
\]
and consequently
\begin{equation}\label{GNS}
\ir \ir J(x-y)\left( u(y)-u(x)\right)^2dxdy \geq K\,\min \left\{\|u\|_{L^{1} (\RR^N)}^{-\frac{4}{N}}\|u\|_{L^{2} (\RR^N)}^{2+\frac{4}{N}},\|u\|_{L^{2} (\RR^N)}^2    \right \}.
\end{equation}

\end{lemma}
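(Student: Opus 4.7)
The plan has two parts. First, I would establish the identity
\begin{equation*}
\int_{\RR^N}\int_{\RR^N} J(x-y)\bigl(u(y)-u(x)\bigr)^2\,dx\,dy \;=\; 2\,D_J(u),
\end{equation*}
obtained by expanding the square, using $\int J = 1$, and invoking Plancherel together with the convolution theorem. This identity reduces \eqref{GNS} to the first inequality: once $\|u\|_{L^2}^2 \le \tilde C\max\{\cdot,\cdot\}$ is known, isolating $D_J(u)$ turns the max into the desired minimum on the right-hand side of \eqref{GNS}, with a factor of $\tfrac12$ absorbed into the constant $K$.

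For the first inequality I would perform a Fourier split at a radius $R \in (0,1]$:
\begin{equation*}
\|u\|_{L^2}^2 = \int_{|\xi|\leq R}|\hat u(\xi)|^2\,d\xi + \int_{|\xi|>R}|\hat u(\xi)|^2\,d\xi,
\end{equation*}
bounding the low frequencies via $|\hat u(\xi)|\leq \|u\|_{L^1}$ (which gives $c_N R^N \|u\|_{L^1}^2$) and the high frequencies via a pointwise lower bound of the form
\begin{equation*}
1 - \hat J(\xi) \;\ge\; \kappa\,\min(|\xi|^2,1), \qquad \xi \in \RR^N,
\end{equation*}
yielding $\int_{|\xi|>R} |\hat u|^2 \le (\kappa R^2)^{-1} D_J(u)$. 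Adding and optimizing in $R$ picks out the critical radius $R_\star = (D_J(u)/\|u\|_{L^1}^2)^{1/(N+2)}$: when $R_\star\le 1$ (the ``Nash regime'') this yields $\|u\|_{L^2}^2 \le \tilde C\,\|u\|_{L^1}^{4/(N+2)}\,D_J(u)^{N/(N+2)}$; otherwise, taking $R=1$ gives $\|u\|_{L^2}^2 \le \tilde C\, D_J(u)$. The two regimes are exactly what the maximum encodes (and I read the $\|u\|_{L^{12}}$ in the statement as a typographical slip for $\|u\|_{L^1}$).

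The main obstacle, and the step where the structure of $J$ is essentially used, is the pointwise lower bound on $1-\hat J$. Near the origin it follows from the Taylor expansion $\hat J(\xi) = 1 - \tfrac12 C(J)|\xi|^2 + o(|\xi|^2)$ recalled just before the lemma, so $1 - \hat J(\xi)\ge \tfrac14 C(J)|\xi|^2$ for $|\xi|\le \delta$ small. Away from the origin I would combine Riemann--Lebesgue (giving $\hat J(\xi)\to 0$ as $|\xi|\to\infty$) with the strict inequality $\hat J(\xi)<1$ for every $\xi\neq 0$; the latter is a consequence of $J\ge 0$ having compact, full-dimensional support with $\int J=1$, since otherwise $\int J(x)(1-\cos(\xi\cdot x))\,dx=0$ would force $J$ to be supported on a set of measure zero. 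Continuity then delivers $\inf_{|\xi|\ge\delta}(1-\hat J(\xi))>0$, and combining with the quadratic bound near $0$ produces the required $\min(|\xi|^2,1)$ estimate.
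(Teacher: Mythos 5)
Your proposal is correct and follows essentially the same route as the paper: a Nash-type Fourier splitting at a radius $R$, the bound $|\hat u(\xi)|\le\|u\|_{L^1}$ on low frequencies, a quadratic lower bound on $1-\hat J$ for the high frequencies, and optimization in $R$ with the two regimes producing the maximum (the paper likewise writes $\|u\|_{L^1}^{4/(N+2)}$ in its proof, confirming that $L^{12}$ in the statement is a typo for $L^1$). The only difference is cosmetic: you spell out why $\inf_{|\xi|\ge\delta}\bigl(1-\hat J(\xi)\bigr)>0$ via Riemann--Lebesgue and the strict positivity of $1-\hat J$ off the origin, a point the paper absorbs into the definition of its constant $C=\max_{|\xi|\ge1}(1-\hat J(\xi))^{-1}$ without comment.
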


\begin{proof}
First, we set the following quantities
\[
C=\max_{|\xi|\geq 1}\,\frac{1}{1-\hat J(\xi)}>0, \hspace{1,5cm} \delta_0=\left(\frac{C\, D_J(u)}{C(N)\|u\|_{L^{1} (\RR^N)}^2 C(J)}   \right)^{\frac{1}{N+2}}\,,
\]
where $C(N)=\frac{N\pi^{N/2}}{2\Gamma\left(\frac{N}{2}+1\right)}$ and $\Gamma$ denotes the Gamma function.
  Since $u \in L^1(\RR^N)\cap L^2(\RR^N)$ it follows that $\hat u \in L^2(\RR^N)$ and consequently we obtain for $0<\delta \leq 1$ that
\begin{align}\label{1.1}
\|\hat u\|_{L^{2} (\RR^N)}^2 = \int_{|\xi|\leq \delta}|\hat u(\xi)|^2d\xi +\int_{|\xi|> \delta}|\hat u(\xi)|^2d\xi
  \leq \|u\|_{L^{1} (\RR^N)}^2 \,\frac{2C(N)}{N}\delta^N+\frac{C}{ C(J) \ \delta^2} D_J (u) \,.
\end{align}


Now, if we assume that $\delta_0\leq 1$. Replacing $\delta$ by $\delta_0$ in \eqref{1.1}, we have
\begin{align}\label{2}
\|\hat u\|_{L^{2} (\RR^N)}^2 \leq C_1 \|u\|_{L^{1} (\RR^N)}^{\frac{4}{N+2}}D_J(u)^{\frac{N}{N+2}}\,,
\end{align}
where $C_1=\left(\frac{2}{N}+1\right)C(N)^{\frac{2}{N+2}}C^{\frac{N}{N+2}}$.
  Alternatively, if we assume that $\delta_0> 1$, i.e.,
\begin{align*}
C(N)\,\|u\|_{L^{1} (\RR^N)}^2<C\,D_J(u),
\end{align*}
choosing $\delta=1$ in \eqref{1.1} and using the above inequality, we get
\begin{align}\label{3}
  \|\hat u\|_{L^{2} (\RR^N)}^2 &\leq \|u\|_{L^{1} (\RR^N)}^2 \,\frac{2C(N)}{N}+C\,D_J(u)
  \leq \left(\frac{2}{N}+1\right)C\, D_J(u).
\end{align}
Finally, using Plancherel's theorem on $\|\hat u\|_{L^{2} (\RR^N)}^2$ and summarizing \eqref{2} and \eqref{3}, it follows that
\[
\| u\|_{L^{2} (\RR^N)}^2 \leq \tilde{C} \max \left\{\|u\|_{L^{1} (\RR^N)}^{\frac{4}{N+2}}D_J(u)^{\frac{N}{N+2}},D_J(u)    \right\}
\]
where $\tilde{C}=\max \left\{C_1, \left(\frac{2}{N}+1\right)C \right\}$ and the proof is concluded.
  Due to the above formula, we can state the following inequality
\[
D_J(u)\geq K\,\min \left\{\|u\|_{L^{1} (\RR^N)}^{-\frac{4}{N}}\|u\|_{L^{2} (\RR^N)}^{2+\frac{4}{N}},\|u\|_{L^{2} (\RR^N)}^2    \right \},
\]
being $K=K(N,J)$.
Thus, it is easy to check that
\[
\ir \ir J(x-y)\left( u(y)-u(x)\right)^2dxdy=-2\ir \left(J\ast u-u\right)\negthickspace(x)\,u(x)\,dx,
\]
having in mind that Fourier transform preserves inner product we deduce \eqref{GNS}
\end{proof}

Next Lemma gives the $L^1$ boundedness  from above or from below of solutions depending on how the function  $\mathcal{G}(x,s)s$ behaves. To be more specific we have the following result.

\begin{lemma}\label{norma-1} Let $u$ be a solution of Cauchy problem \eqref{C} with $0\leq u_0\in L^1(\RR^N)$. Then
\begin{enumerate}
\item[(i)] If $\mathcal{G}$ satisfies \eqref{mu<0}, it follows that $t \mapsto \|u(\cdot,t)\|_{L^{1} (\RR^N)}$ is decreasing on $[0,\infty)$, therefore
 \[\|u(\cdot,t)\|_{L^{1} (\RR^N)}\leq \|u_0\|_{L^{1} (\RR^N)}\,.\]
\item[(ii)] If $\mathcal{G}$ satisfies \eqref{mu>0}, it follows that $t \mapsto \|u(\cdot,t)\|_{L^{1} (\RR^N)}$ is increasing on $[0,\infty)$, therefore
 \[\|u(\cdot,t)\|_{L^{1} (\RR^N)} \geq \|u_0\|_{L^{1} (\RR^N)}\,.\]
\end{enumerate}
\end{lemma}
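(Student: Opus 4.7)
The plan is to integrate the nonlocal equation in $x$ over $\RR^N$, exploit the pointwise sign condition on $\mathcal{G}(x,\sigma)\sigma$, and use the normalization $\int J = 1$ together with the symmetry of $J$ to produce a perfect cancellation. Two preliminary facts are needed. First, positivity: since $v\equiv 0$ solves \eqref{C} (the integrand vanishes because $\mathcal{G}(x,0)\cdot 0 = 0$), Theorem \ref{C-P_Cauchy} applied with $v \equiv 0$ and $u_0 \geq 0$ yields $u(x,t)\geq 0$ on $\RR^N\times [0,T)$, and hence $\|u(\cdot,t)\|_{L^1}=\int u(x,t)\,dx$. Second, $L^1$-integrability of $u(\cdot,t)$: using the integral form of the equation, the bound $|\mathcal{G}(x,\sigma)\sigma|\leq \alpha_2|\sigma|$, the compact support of $J$, and Fubini applied to the nonnegative integrand $J(x-y)(u(y,\tau)+u(x,\tau))$, a truncation-and-Gronwall argument on $\int_{B_R}u(x,t)\,dx$ followed by monotone convergence as $R\to\infty$ would yield $\|u(\cdot,t)\|_{L^1}\leq \|u_0\|_{L^1}e^{2\alpha_2 t}<\infty$.

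With these at hand, I would integrate the integral form of \eqref{C} in $x$ and apply Fubini--Tonelli (now legitimate) to obtain, for any $0\leq s\leq t$,
\begin{equation*}
\int_{\RR^N}u(x,t)\,dx - \int_{\RR^N}u(x,s)\,dx = \int_s^t\!\!\int_{\RR^N}\!\!\int_{\RR^N}\! J(x-y)\,\mathcal{G}(x,u(y;x,\tau))\,u(y;x,\tau)\,dy\,dx\,d\tau.
\end{equation*}
For case (i), condition \eqref{mu<0} gives $\mathcal{G}(x,\sigma)\sigma\leq \beta\sigma$ pointwise (valid for both signs of $\sigma$), so the integrand above is bounded from above by $\beta J(x-y)(u(y,\tau)-u(x,\tau))$. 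Splitting the difference and applying Fubini separately to each piece with $\int_{\RR^N}J(z)\,dz = 1$, each piece equals $\beta\|u(\cdot,\tau)\|_{L^1}$, so they cancel. Thus the right-hand side is $\leq 0$, which proves that $t\mapsto\|u(\cdot,t)\|_{L^1}$ is decreasing and in particular $\leq \|u_0\|_{L^1}$. Case (ii) is completely analogous: the reverse inequality $\mathcal{G}(x,\sigma)\sigma\geq \beta\sigma$ yields monotone increase.

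The main obstacle I anticipate is not the final cancellation but the preliminary $L^1$-integrability: Fubini on the possibly signed integrand $J(x-y)\mathcal{G}(x,u(y;x,\tau))u(y;x,\tau)$ requires absolute integrability, while a priori $u(\cdot,t)$ is only known to lie in $\mathcal{C}(\RR^N)\cap L^\infty(\RR^N)$. The truncation and monotone-convergence approach circumvents this, but must be set up with some care to avoid circular reasoning; the compact support of $J$ helps, since for $x\in B_R$ only values of $u(y,\tau)$ with $y$ in a slightly larger ball contribute to the convolution.
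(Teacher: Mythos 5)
Your proposal is correct and follows essentially the same route as the paper: positivity of $u$ via the comparison principle, the pointwise bound $\mathcal{G}(x,s)s\le\beta s$ (resp.\ $\ge\beta s$), and the cancellation of $\int\int J(x-y)\bigl(u(y,t)-u(x,t)\bigr)\,dy\,dx$ by symmetry of $J$ and Fubini. The only difference is that you explicitly justify the a priori $L^1$-integrability of $u(\cdot,t)$ needed to legitimize Fubini, a point the paper's proof takes for granted; that extra care is sound and welcome.
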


\begin{proof}
Since $0\leq u_0$ and Comparison Principle of Proposition \ref{C-P_Cauchy} we can assume that $u(x,t)\geq 0$. Furthermore, if $\mathcal{G}(x,s)s\leq \beta s$ for any  $(x,s)\in \RR^N\times\RR$, since
\begin{align*}
\frac{d}{dt}\ir u(x,t)dx & =\ir \ir J(x-y)u(y;x,t) \ \mathcal{G}(x,u(y;x,t))dydx
\\
& \leq \beta \ir \ir J(x-y)(u(y,t)-u(x,t))dydx=0,
\end{align*}
where the last identity follows since, by Fubini Theorem,
$$
\ir \ir J(x-y)(u(y,t)-u(x,t))dydx =  \ir \ir J(x-y)(u(x,t)-u(y,t))dxdy\,.
$$

Hence $\|u(\cdot,t)\|_{L^{1} (\RR^N)}$ is nonincreasing in time and we state $(i)$. Equivalently, if $\mathcal{G}(x,s)s\geq \beta s$ for any  $(x,s)\in \RR^N\times\RR$, reasoning as above we obtain the opposite inequality and, consequently, $\|u(\cdot, t)\|_{L^{1} (\RR^N)}$ is nondecreasing in time and $(ii)$ is proved.
\end{proof}

\medskip

Now we can prove the asymptotic behavior of the solution for $\mathcal{G}$ satisfying \eqref{mu<0},

\begin{theorem} \label{Asymptotic L^q Cauchy m<0}
Let $u$ be a solution of Cauchy problem \eqref{C} with $\mathcal{G}$ satisfying \eqref{mu<0} and positive prescribed data $u_0\in L^1(\RR^N)\cap L^q(\RR^N)$ for $q\geq 2$. Then there exists $C=C(J,N,\beta,q)>0$ such that
\[
\|u(\cdot,t)\|_{L^q (\RR^N)}\leq C\|u_0\|_{L^1 (\RR^N)}t^{-\frac{N}{2}\left(1-\frac{1}{q}\right)},
\]
for any  $t$ sufficiently large.
\end{theorem}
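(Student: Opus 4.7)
The plan is to derive a nonlinear energy inequality for $y(t):=\|u(\cdot,t)\|_{L^{q}(\RR^N)}^{q}$, to convert it via the nonlocal Gagliardo--Nirenberg--Sobolev estimate in Lemma \ref{lemma} into a Bernoulli-type ODE, and then to integrate that ODE.

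I would begin by observing that $v\equiv 0$ solves \eqref{C}, so Theorem \ref{C-P_Cauchy} together with $u_0\ge 0$ yields $u\ge 0$ on $\RR^N\times(0,T)$, while Lemma \ref{norma-1}(i) gives $\|u(\cdot,t)\|_{L^1(\RR^N)}\le\|u_0\|_{L^1(\RR^N)}$. Multiplying the equation in \eqref{C} by $u^{q-1}(x,t)$ and integrating over $\RR^N$, then using the pointwise inequality $\mathcal G(x,s)s\le\beta s$ from \eqref{mu<0} (admissible since $u^{q-1}\ge 0$) and symmetrising the result in $x\leftrightarrow y$ (legitimate because $J$ is radial), one arrives at
\[
\frac{1}{q}\frac{d}{dt}\|u(\cdot,t)\|_{L^{q}}^{q}\;\le\;-\frac{\beta}{2}\ir\ir J(x-y)\bigl(u^{q-1}(y,t)-u^{q-1}(x,t)\bigr)\bigl(u(y,t)-u(x,t)\bigr)\,dy\,dx.
\]
Lemma \ref{qu}, applied with $a=u(y,t)$, $b=u(x,t)\ge 0$, then upgrades this to
\[
\frac{d}{dt}\|u(\cdot,t)\|_{L^{q}}^{q}\;\le\;-\frac{q\beta c(q)}{2}\ir\ir J(x-y)\bigl(u^{q/2}(y,t)-u^{q/2}(x,t)\bigr)^{2}\,dy\,dx.
\]

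Next I would apply Lemma \ref{lemma} to $w:=u^{q/2}$; noting $\|w\|_{L^1}=\|u\|_{L^{q/2}}^{q/2}$ and $\|w\|_{L^2}^{2}=\|u\|_{L^{q}}^{q}$, the right-hand side is bounded above by $-c\min\{A,B\}$ with $A=\|u\|_{L^{q/2}}^{-2q/N}\|u\|_{L^{q}}^{q(1+2/N)}$ and $B=\|u\|_{L^{q}}^{q}$. Interpolating $\|u\|_{L^{q/2}}\le\|u\|_{L^{1}}^{1/(q-1)}\|u\|_{L^{q}}^{(q-2)/(q-1)}$ and inserting $\|u(\cdot,t)\|_{L^{1}}\le\|u_0\|_{L^{1}}$ transforms the nonlinear branch into
\[
A\;\ge\;\|u_0\|_{L^{1}}^{-\frac{2q}{N(q-1)}}\,\|u(\cdot,t)\|_{L^{q}}^{\,q+\frac{2q}{N(q-1)}}.
\]
For $t$ sufficiently large the $L^q$-norm is small enough that $A\le B$, so the preceding estimates collapse into a Bernoulli differential inequality
\[
y'(t)\;\le\;-c_\star\,y(t)^{1+\frac{2}{N(q-1)}},\qquad c_\star\;\sim\;\|u_0\|_{L^{1}}^{-\frac{2q}{N(q-1)}}.
\]
Integrating from a suitable $t_0$ up to $t$ and dropping the nonnegative initial term gives $y(t)\le C\,\|u_0\|_{L^{1}}^{q}\,t^{-N(q-1)/2}$, and extracting the $q$-th root produces the claimed bound $\|u(\cdot,t)\|_{L^{q}}\le C\|u_0\|_{L^{1}}\,t^{-\frac{N}{2}(1-1/q)}$.

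The chief obstacle is justifying the claim that the nonlinear branch $A$ is indeed the smaller of the two after a data-dependent time $t_0$, since only then does the exponent $1+2/[N(q-1)]>1$ in the ODE produce a polynomial rather than an exponential rate. I see two ways of handling this: either produce a preliminary crude decay of $\|u(\cdot,t)\|_{L^{q}}$---for instance by comparing $u$ with the linear nonlocal heat equation $v_t=\beta(J\ast v-v)$, of which $u$ is a subsolution by virtue of \eqref{mu<0}---or split the time axis into the two regimes $\{A\le B\}$ and $\{B<A\}$ and observe that the second regime forces exponential (hence stronger) decay and so cannot persist. A minor technical point is the admissibility of $u^{q-1}u_t$ as a test, which is licit thanks to the $\mathcal C^{1}$-in-time regularity of $u$ granted by Theorem \ref{existence_Cauchy}.
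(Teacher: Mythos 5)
Your proposal follows the paper's proof essentially step for step: multiply by $u^{q-1}$, use \eqref{mu<0} and symmetrization, apply Lemma \ref{qu} and then the nonlocal Gagliardo--Nirenberg inequality \eqref{GNS} to $u^{q/2}$, interpolate $\|u\|_{L^{q/2}}$ against the $L^1$ bound from Lemma \ref{norma-1}, and integrate the resulting Bernoulli inequality. The ``chief obstacle'' you flag is resolved in the paper exactly by your second suggestion --- if $Y(t)>\|u_0\|_{L^1}^q$ persisted, the linear branch would force exponential decay and a contradiction, so after some $t_0$ the nonlinear branch is active --- hence your argument is correct and coincides with the paper's.
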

\bigskip

\medskip

\begin{proof}[Proof of Theorem \ref{Asymptotic L^q Cauchy m<0}]

Let $q \geq 2$ and let us multiply the equation in \rife{C} by $u^{q-1} (x,t)$ (observe that $u\geq 0$): thus we have
\begin{align*}
\frac{d}{dt} \  \frac1q \ir u(x,t)^qdx &= \ir u_t(x,t)u(x,t)^{q-1}dx
\\
& \leq  \, \beta\ir \ir J(x-y)(u(y,t)-u(x,t))u(x,t)^{q-1}dxdy
\\
&=- \frac{\beta}{2} \ir \ir J(x-y)(u(y,t)-u(x,t))(u(y,t)^{q-1}-u(x,t)^{q-1})dxdy
\\
& \leq - C(q,\beta) \ir \ir J(x-y)(u(y,t)^{q/2}-u(x,t)^{q/2})^2dxdy,
\end{align*}
where in the last inequality we have used   Lemma \ref{qu}. Hence by \eqref{GNS}, we get
\begin{align*}
\frac{d}{dt}\ir u(x,t)^qdx  \leq  -C \,\min \left\{\|u(\cdot,t)\|_{{L^{\frac{q}2} (\RR^N)}  }^{-\frac{2q}{N}}\|u(\cdot,t)\|_{L^{q} (\RR^N)}^{q\left(1+\frac{2}{N}\right)},\|u(\cdot,t)\|_{L^{q} (\RR^N)}^q    \right \}
\end{align*}
where $C=C(q,\beta,N,J)$. Now, by interpolation $\|u(\cdot,t)\|_{{L^{\frac{q}2} (\RR^N)} }\leq \|u(\cdot,t)\|_{L^{1} (\RR^N)}^{\frac{1}{q-1}}\,\|u(\cdot,t)\|_{L^{q} (\RR^N)}^{\frac{q-2}{q-1}}$ and denoting by $Y(t)=\|u(\cdot,t)\|_{L^{q} (\RR^N)}^q$, we obtain,  in virtue of Lemma \ref{norma-1}, the following differential inequality
\begin{equation}\label{Y(t)}
Y^\prime(t)\leq -C\, \min \left\{\|u_0\|_{L^{1} (\RR^N)}^{-q\gamma}Y(t)^{1+\gamma},Y(t)  \right\}
\end{equation}
being $\gamma=\displaystyle \frac{2}{N(q-1)}$. Therefore, $Y(t)$ is decreasing. We claim that there exists $t_0\geq 0$ such that
\[
Y(t)\leq \|u_0\|_{L^{1} (\RR^N)}^q,\hspace{1,0cm} t\geq t_0.
\]
Indeed, otherwise, using that $Y(t)$ is decreasing, we would have that   $\|u_0\|_{L^{1} (\RR^N)}^q \leq Y(t)$ for any  $t\geq t_0$. Replacing in \eqref{Y(t)} we obtain
$$
Y^\prime(t)\leq -C\,Y(t),\hspace{1,0cm} t\geq t_0,
$$
and integrating on $[t_0, t ]$ we get that $Y(t)\leq Y(t_0)e^{-C(t-t_0)}\to 0$ as $t \to \infty$ which leads to a contradiction and the claim is proved.

  Thus, since
\begin{align*}
Y(t)=Y(t)^{1+\gamma}Y(t)^{-\gamma}
 \geq Y(t)^{1+\gamma}Y(t_0)^{-\gamma}
  \geq Y(t)^{1+\gamma}\|u_0\|_{L^{1} (\RR^N)}^{-q\gamma},
\end{align*}
it follows, by inequality \eqref{Y(t)}, that
\[
Y^\prime(t)\leq -C\, \|u_0\|_{L^{1} (\RR^N)}^{-q\gamma}\,Y(t)^{1+\gamma}, \hspace{1,0cm} t\geq t_0.
\]
Integrating on $[t_0,t]$ we get
\[
Y(t)\leq \frac{\|u_0\|_{L^{1} (\RR^N)}^q}{(\gamma\,C)^{1/\gamma}}\,(t-t_0)^{-1/\gamma}.
\]
Having in mind that $Y(t)=\|u(\cdot,t)\|_{L^{q} (\RR^N)}^q$ and $\displaystyle\frac{-1}{q\,\gamma}=-\frac{N}{2}\left(1-\frac{1}{q} \right)$ we conclude that, for any  time $t$ large enough, $\exists \   C=C(J,N,\beta,q)$, such that
\[
\|u(\cdot,t)\|_{L^{q} (\RR^N)} \leq C \|u_0\|_{L^{1} (\RR^N)}\, t^{-\frac{N}{2}\left(1-\frac{1}{q} \right)}\,.
\]
  \end{proof}

\smallskip

With the help of the above result, we can now prove Theorem \ref{asymp Cauchy}.

\smallskip

\begin{proof}[Proof of Theorem \ref{asymp Cauchy}]
Theorem \ref{Asymptotic L^q Cauchy m<0} covers the case $q\geq 2$, while for  $q\in (1,2]$ the interpolation inequality yields to
\begin{align*}
\|u(\cdot,t)\|_{L^{q} (\RR^N)} & \leq \|u(\cdot,t)\|_{L^{1} (\RR^N)}^{\frac{2}{q}-1}\, \|u(\cdot,t)\|_{L^{2} (\RR^N)}^{2\left(1-\frac{1}{q} \right)}
 \leq C \|u_0\|_{L^{1} (\RR^N)}\,t^{-\frac{N}{2}\left(1-\frac{1}{q} \right)},
\end{align*}
being $C=C(J,N,\beta,q)$ a positive constant.
\end{proof}

\medskip

In order to obtain a decay estimate of the norm of the solution $u$,  for functions $\mathcal{G}_\mu$ with $\mu(x)\geq 0$, a $L^1$ boundedness from  above of $u$ is required. For this purpose, we must to control de $L^\infty$-norm of initial data $u_0$ with respect to function $\mu$.

\begin{lemma}\label{cetraro}
Let $u$ be a solution of of Cauchy problem \eqref{C} with $\mathcal{G}\equiv \mathcal{G}_\mu$, $0\leq \mu \in L^\infty(\RR^N)$ and positive prescribed data $u_0\in L^\infty(\RR^N)\cap \mathcal{C}(\RR^N)$ satisfying $\|u_0\|_{L^\infty(\RR^N)}\|\mu\|_{L^\infty(\RR^N)}=\theta<1$. Then
\begin{equation}\label{energy}
\frac{d}{dt}\|u(\cdot,t)\|_{L^2(\RR^N)}^2\leq -(1-\theta)\ir \ir J(x-y)(u(y,t)-u(x,t))^2dydx.
\end{equation}
If, in addition, $u_0\in L^1(\RR^N)$ then
\begin{equation}\label{u_0 bounded from above}
\|u(\cdot,t)\|_{L^1(\RR^N)}\leq c \|u_0\|_{L^1(\RR^N)},
\end{equation}
with $c=c(\|u_0 \|_{L^{\infty} (\RR^N)}, \| \mu  \|_{L^{\infty} (\RR^N)})>1$.
\end{lemma}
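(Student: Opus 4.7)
I would split the argument along the two inequalities. The key structural observation is the decomposition $\mathcal{G}_\mu(x,s) = 1 + g_\mu(x,s)$ with $g_\mu(x,s) = \frac{\mu(x)s}{2(1+\mu(x)^2 s^2)}$; the piece ``$1$'' is handled by the standard symmetrisation trick based on $J(z)=J(-z)$, while the piece $g_\mu$ is small enough to be absorbed thanks to the assumption $\theta := \|u_0\|_{L^\infty(\RR^N)}\|\mu\|_{L^\infty(\RR^N)} < 1$. As a preliminary, I would use Theorem \ref{C-P_Cauchy} with the constant super/subsolutions $\|u_0\|_{L^\infty(\RR^N)}$ and $0$ (both are stationary solutions of \eqref{C} since $\mathcal{G}_\mu(x,0)=1$) to get $0 \leq u(x,t) \leq \|u_0\|_{L^\infty(\RR^N)}$, so that $0 \leq \mu(x)u(x,t) \leq \theta$ pointwise.

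\textbf{Step 1: the energy inequality.} I would multiply the equation in \eqref{C} by $u(x,t)$, integrate over $\RR^N$, and split $\mathcal{G}_\mu = 1 + g_\mu$. The ``linear'' contribution $\ir\ir J(x-y)(u(y,t)-u(x,t))u(x,t)\,dy\,dx$ becomes, after swapping $x\leftrightarrow y$ and using symmetry of $J$, equal to $-\tfrac12\ir\ir J(x-y)(u(y,t)-u(x,t))^2\,dy\,dx$. The ``nonlinear'' contribution is bounded pointwise using the identity $s\,g_\mu(x,s) = \frac{\mu(x)s^2}{2(1+\mu(x)^2 s^2)} \leq \tfrac{\mu(x)}{2} s^2$, which combined with $\mu(x)u(x,t) \leq \theta$ yields an upper bound $\tfrac{\theta}{2}\ir\ir J(x-y)(u(y,t)-u(x,t))^2\,dy\,dx$. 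Summing the two contributions delivers \eqref{energy}.

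\textbf{Step 2: the $L^1$ bound.} I would integrate the equation itself in $x$. This time the symmetrisation trick kills the linear part entirely, by antisymmetry of $u(y,t)-u(x,t)$ under $x \leftrightarrow y$, and only the nonlinear piece survives; exactly as in Step 1, it is dominated by $\tfrac{\|\mu\|_{L^\infty(\RR^N)}}{2}\ir\ir J(x-y)(u(y,t)-u(x,t))^2\,dy\,dx$. Rather than apply Gronwall (which would give only exponential-in-time growth), I would substitute for this bilinear form using \eqref{energy}, which reads $\ir\ir J(x-y)(u(y,t)-u(x,t))^2\,dy\,dx \leq -\tfrac{1}{1-\theta}\tfrac{d}{dt}\|u(\cdot,t)\|_{L^2(\RR^N)}^2$. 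Integrating from $0$ to $t$ telescopes the right-hand side, and the trivial bound $\|u_0\|_{L^2(\RR^N)}^2 \leq \|u_0\|_{L^\infty(\RR^N)}\|u_0\|_{L^1(\RR^N)}$ produces $\|u(\cdot,t)\|_{L^1(\RR^N)} \leq \bigl(1+\tfrac{\theta}{2(1-\theta)}\bigr)\|u_0\|_{L^1(\RR^N)}$, which is \eqref{u_0 bounded from above} with explicit constant $c = (2-\theta)/(2(1-\theta))$.

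\textbf{Main obstacle.} The only real subtlety is in Step 2: the direct Gronwall route fails because it does not exploit the dissipative nature of the equation, and one must recognise that both the $L^1$ growth rate and the $L^2$ dissipation are driven by \emph{the same} quadratic form $\ir\ir J(x-y)(u(y,t)-u(x,t))^2\,dy\,dx$. Using the energy inequality as a substitute, rather than a Gronwall estimate, is what converts potential exponential growth into a uniform bound. The remaining ingredients -- symmetrisation in $x\leftrightarrow y$, the pointwise bound on $g_\mu$, and the comparison-principle a priori bounds -- are routine.
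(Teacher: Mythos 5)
Your proposal is correct and follows essentially the same route as the paper: the comparison principle gives $0\le u\le\|u_0\|_{L^\infty(\RR^N)}$, the energy inequality comes from multiplying by $u$, symmetrising the linear part and absorbing the nonlinear part via $\mu(x)u(x,t)\le\theta$, and the $L^1$ bound is obtained by dominating $\frac{d}{dt}\|u(\cdot,t)\|_{L^1(\RR^N)}$ by the same quadratic form and substituting \eqref{energy} before integrating in time. Even your explicit constant $c=1+\tfrac{\theta}{2(1-\theta)}$ coincides with the paper's $c=1+c_1\|u_0\|_{L^\infty(\RR^N)}$.
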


\begin{proof}
Since $u_0\in L^\infty(\RR^N)\cap \mathcal{C}(\RR^N)$, by Theorem \ref{existence_Cauchy} there exists a unique solution of problem \eqref{C} and it satisfies  $u\in \mathcal{C}\left([0,\infty);\mathcal{C}(\RR^N)\cap L^\infty(\RR^N)  \right)$. Moreover, since $0$ and $\|u_0\|_{L^\infty(\RR^N)}$ are sub and supersolution respectively of problem \eqref{C}, we get, due the comparison principle Theorem \ref{C-P_Cauchy}, that
\[
0 \leq u(x,t) \leq \|u_0\|_{L^\infty(\RR^N)}, \hspace{1,2cm} (x,t)\in \RR^N\times [0,\infty).
\]

Let us multiply the equation in \rife{C} by $u(x,t) $ and integrate in $\RR^N$, so that
$$
\frac{d}{dt} \|u (\cdot , t)\|_{L^2 (\RR^N)}^2 = 2  \int_{\RR^N} u_t (x,t) u(x, t) dx
=2 \int_{\RR^N}\int_{\RR^N} J(x-y) u(y;x,t) \ \mathcal{G}_\mu (u(y;x, t))\,u(x,t) \ dy dx
$$
$$
= 2 \int_{\RR^N}\int_{\RR^N} J(x-y)u(y;x,t)\ u(x,t)dydx  + \int_{\RR^N}\int_{\RR^N} J(x-y)\frac{\mu(x)u(y;x,t)^2}{1+\mu^2(x)u(y;x,t)^2}u(x, t) dy  dx
$$
$$
\leq -\ir\ir J(x-y)u(y;x,t)^2dydx+\ir \ir J(x-y)\mu(x)u(y;x,t)^2u(x,t)dydx
$$

$$
=-\ir \ir J(x-y)u(y;x,t)^2(1-\mu(x)u(x,t))dydx
$$

$$
\leq-(1-\theta)\ir \ir J(x-y)(u(y,t)-u(x,t))^2dydx,
$$
which proves the first part of lemma.

In order to get \eqref{u_0 bounded from above}, we compute the derivate of $L^1$-norm of $u$, and  we get
\begin{align*}
\frac{d}{dt}\|u(\cdot,t)\|_{L^1(\RR^N)} & = \ir \ir J(x-y)\mathcal{G}_\mu \big(u(y,t)-u(x,t)\big)\big(u(y,t)-u(x,t)\big)dydx
\\
& =  \ir \ir J(x-y)\frac{\mu(x)}{2}\, \frac{\big(u(y,t)-u(x,t)\big)^2}{1+\mu^2(x)\big(u(y,t)-u(x,t)\big)^2}dxdy
\\
&\leq \frac{\|\mu\|_{L^\infty(\RR^N)}}{2}\ir \ir J(x-y)\big(u(y,t)-u(x,t)\big)^2dydx
\\
& \leq -\frac{\|\mu\|_{L^\infty(\RR^N)}}{2}\, \frac{1}{1-\theta}\, \frac{d}{dt}\|u(\cdot,t)\|_{L^2(\RR^N)}^2,
\end{align*}
where we have used \eqref{energy} in the last inequality. Hence, we obtain  the following differential inequality:
\[
\exists\ c_1 >0 \, : \qquad  \frac{d}{dt}\|u(\cdot,t)\|_{L^1(\RR^N)}+c_1 \frac{d}{dt}\|u(\cdot,t)\|_{L^2(\RR^N)}^2\leq 0 ,
\]
being $c_1=\frac{\|\mu\|_{L^\infty(\RR^N)}}{2(1-\theta)}>0$. Consequently, integrating on $[0,t]$,
\begin{align*}
\|u(\cdot,t)\|_{L^1(\RR^N)} +c_1\|u(\cdot,t)\|_{L^2(\RR^N)}^2 
  \leq \|u_0\|_{L^1(\RR^N)}+c_1 \|u_0\|_{L^\infty(\RR^N)}\, \|u_0\|_{L^1(\RR^N)},
\end{align*}
where we have used the interpolation formula, $\|u_0\|_{L^2(\RR^N)}^2\leq  \|u_0\|_{L^\infty(\RR^N)}\,\|u_0\|_{L^1(\RR^N)}$.
Finally we conclude that $\|u(\cdot,t)\|_{L^1(\RR^N)}\leq c \|u_0\|_{L^1(\RR^N)}$, for $c=1+c_1\|u_0\|_{L^\infty(\RR^N)}$.

\end{proof}

\bigskip

\begin{proof}[Proof of Theorem \ref{L^2 mu>0}]
Applying inequality \eqref{GNS} in \eqref{energy} from Lemma \ref{cetraro}, it follows
\[
\frac{d}{dt} \|u (\cdot , t)\|_{L^2 (\RR^N)}^2\leq -C_1 \min \left\{ \|u(\cdot,t)\|_{L^{1} (\RR^N)}^{-\frac{4}{N}}\|u(\cdot,t)\|_{L^{2} (\RR^N)}^{2+\frac{4}{N}},\|u(\cdot,t)\|_{L^{2} (\RR^N)}^2  \right \},
\]
where $C_1=C_1(\|\mu\|_{L^\infty(\RR^N)},\|u_0\|_{L^\infty(\RR^N)},N,J)>0$. Writing $X(t)=\|u (\cdot , t)\|_{L^2 (\RR^N)}^2$ and using the boundedness of $L^1$-norm in inequality \eqref{u_0 bounded from above} we have that
\[
X^\prime(t)\leq -C_2 \min \left \{\|u_0\|_{L^1(\RR^N)}^{-\frac{4}{N}}\,X(t)^{1+\frac2N},\, X(t)   \right \},
\]
where $C_2=C_2(\|\mu\|_{L^\infty(\RR^N)},\|u_0\|_{L^\infty(\RR^N)},N,J)>0$. Thus, arguing as in proof of Theorem \ref{Asymptotic L^q Cauchy m<0}, we can assume that there exists $t_0\geq0$ such that $X(t)\leq \|u_0\|_{L^1(\RR^N)}^2$ for $t\geq t_0$ and therefore,
\[
X^\prime(t)\leq -C_2\|u_0\|_{L^1(\RR^N)}^{-\frac4N}\,X(t)^{1+\frac2N}, \hspace{1cm} t\geq t_0.
\]
Finally, integrating on $[t_0,t]$, we obtain the $L^2$-norm decay estimate for any $t$ sufficiently large.
\end{proof}


\begin{thebibliography}{99}



\bibitem{AMRT} F. Andreu, J.M. Maz\'on, J.D. Rossi and J. Toledo,
\emph{ A nonlocal p-Laplacian evolution equation with nonhomogeneous Dirichlet boundary conditions},
{SIAM J. Math. Anal.} \textbf{40},   1815--1851 (2009).

\bibitem{AS} D.G. Aronson, J. Serrin, \emph{ Local behavior of solutions of quasilinear parabolic equations},
{Arch. Rat. Mech. Anal.} \textbf{25}, 81--122 (1967).

  \bibitem{BS} A.L. Barab\'asi and H.E. Stanley,
    \emph{ Fractal Concepts in Surface Growth}, {Cambridge University Press} (1995)


\bibitem{Bates} P. Bates, P. Fife, X. Ren, X. Wang, \emph{ Travelling waves in a convolution model for phase transitions}, {Arch. Ration. Mech. Anal.} \textbf{138}, 105--136 (1997).

         \bibitem{BMP}  L. Boccardo, F. Murat, J.P. Puel, \emph{ Existence results for some quasilinear parabolic
equations}, {Nonlinear Analysis T.M.A.} \textbf{13}, 373--392 (1989).

\bibitem{CM} J. A. Ca\~nizo, A. Molino, \emph{ Improved energy methods for nonlocal diffusion problems},  {Preprint} (2016).



\bibitem{Chasse} E. Chasseigne, M. Chaves, J.D. Rossi, \emph{Asymptotic behavior for nonlocal diffusion equations}, {J. Math. Pures Appl.} \textbf{86}, 271--291 (2006).

\bibitem{Elgueta} C. Cort\'azar, M. Elgueta, J. D. Rossi, \emph{Nonlocal Diffusion problems that approximate the heat equation with Dirichlet  boundary conditions},    {Israel Journal of Mathematics} \textbf{170}, 53--60 (2009).

\bibitem{CERW} C. Cort\'azar, M. Elgueta, J.D. Rossi, N. Wolanski, \emph{Boundary fluxes for non-local diffusion},  {J. Differential Equations} \textbf{234}, 360--390 (2007).

    \bibitem{C} I. Corwin, \emph{The Kardar-Parisi-Zhang equation and universality class},
   {Random Matrices Theory Appl.} \textbf{1,}  1130001, (2012).


        \bibitem{H} M. Hairer,  \emph{Solving the KPZ equation}, {Annals of Mathematics} \textbf{178},  559--664 (2013).

\bibitem{IR} L.  Ignat and J.D. Rossi,
\emph{A nonlocal convection-diffusion equation},
{J. Funct. Anal.} \textbf{251},   399--437 (2007).



\bibitem{K} M. Kardar, G. Parisi, Y.C. Zhang, \emph{Dynamic scaling of growing interfaces}.  {Phys. Rev. Lett.} \textbf{56},  889--892 (1986).






\bibitem{LSU} O.A. Ladyzenskaja, V.A. Solonnikov, N.N.
Ural'ceva, \emph{Linear and quasi-linear equations of parabolic type.}
Translations of Math. Monographs, vol 23, Providence, 1968.

\bibitem{lp} T. Leonori, F. Petitta, \emph{ Asymptotic behavior of solutions for parabolic equations with natural growth term and irregular data}, Asymptotic Analysis {\bf 48}, 219--233 (2006).

 \bibitem{MR} A. Molino, J. Rossi, {\it Nonlocal diffusion problems that approximate a parabolic equation with spatial dependence}  Z. Angew. Math. Phys. {\bf 67} 1--14 (2016).






\bibitem{V} E. Valdinoci, \emph{From the long jump random walk to the fractional Laplacian}, { Bol. Soc. Esp. Mat. Apl. SeMA}, {\bf 49} , 33--44  (2009).

\bibitem{WERDL} H. S. Wio, C. Escudero, J. A. Revelli, R. R. Deza,  M. S. de La
Lama,
\emph{Recent developments on the Kardar-Parisi-Zhang surface-growth
equation}, {Phil. Trans. R. Soc. A} \textbf{369}, 396 (2011).

\end{thebibliography}
\end{document}